\documentclass[11pt,a4paper]{amsart}

 \usepackage{amsfonts,amsmath,amscd,amssymb,amsbsy,amsthm,amstext,amsopn}
 \usepackage{fullpage,mathrsfs,subfigure}
 \usepackage{pstricks,pst-node,pst-text,pst-tree}
 \usepackage{stmaryrd}  
 \usepackage{extarrows}

 \numberwithin{equation}{section}
 \addtolength{\topmargin}{5mm}
 \addtolength{\textheight}{-15mm}
 \setcounter{tocdepth}{1}

\newtheorem{theorem}{Theorem}[section]
\newtheorem{proposition}[theorem]{Proposition}
\newtheorem{lemma}[theorem]{Lemma}

\newtheorem{corollary}[theorem]{Corollary}
\newtheorem{conjecture}[theorem]{Conjecture}

\theoremstyle{definition}
\newtheorem{definition}[theorem]{Definition}
\newtheorem{example}[theorem]{Example}

\theoremstyle{remark}
\newtheorem{remark}[theorem]{Remark}

\newcommand{\kk}{\ensuremath{\Bbbk}} 

\newcommand{\NN}{\ensuremath{\mathbb{N}}} 
 
\newcommand{\QQ}{\ensuremath{\mathbb{Q}}} 
\newcommand{\RR}{\ensuremath{\mathbb{R}}} 
\newcommand{\ZZ}{\ensuremath{\mathbb{Z}}} 
\newcommand{\one}{\ensuremath{(\mathrm{i})}}
\newcommand{\two}{\ensuremath{(\mathrm{ii})}}
\newcommand{\three}{\ensuremath{(\mathrm{iii})}}

\newcommand{\codim}{\operatorname{cod}}

\newcommand{\conv}{\operatorname{conv}}

\renewcommand{\div}{\operatorname{div}} 
\newcommand{\head}{\operatorname{\mathsf{h}}}
\newcommand{\ghilb}{\ensuremath{G}\operatorname{-Hilb}}
\newcommand{\git}{\ensuremath{/\!\!/\!}}

\newcommand{\hilbg}{\operatorname{Hilb}^{\ensuremath{G}}}

\newcommand{\inc}{\operatorname{inc}}

\newcommand{\mult}{\operatorname{mult}}

\newcommand{\relint}{\operatorname{relint}}

\newcommand{\supp}{\operatorname{supp}}
\newcommand{\tail}{\operatorname{\mathsf{t}}} 
\newcommand{\tile}{\operatorname{\mathcal{T}}}

\newcommand{\wt}{\operatorname{wt}} 

\newcommand{\ACcyc}{\ensuremath{\mathscr{C}_{\mathsf{ac}}(Q)}}

 \newcommand{\Cl}{\operatorname{Cl}}

\newcommand{\End}{\operatorname{End}}
 
\newcommand{\GL}{\operatorname{GL}}  
  
\newcommand{\Hom}{\operatorname{Hom}}

\newcommand{\Ker}{\operatorname{ker}}

\newcommand{\Proj}{\operatorname{Proj}}

\newcommand{\SL}{\operatorname{SL}} 
\newcommand{\Spec}{\operatorname{Spec}} 
 
\newcommand{\Tile}{\operatorname{\mathcal{P}}} 
\newcommand{\Wt}{\operatorname{Wt}}

\newcommand{\modAPhi}{\operatorname{mod}(\ensuremath{A_W})}

\def\lbdot{_{{^{_{_{_{_{_{\bullet}}}}}}}}}

\title{Cellular resolutions of noncommutative toric algebras from superpotentials}
\thanks{MSC 2010: Primary  14A22, 16S38;  Secondary 14M25, 16E05, 16G20,}

 \author{Alastair Craw and Alexander Quintero V\'{e}lez} 
 \address{Department of Mathematics\\ University of Glasgow\\ Glasgow\\ G12 8QW\\ United Kingdom}
 \email{Alastair.Craw@glasgow.ac.uk, Alexander.QuinteroVelez@glasgow.ac.uk}

\begin{document}
\bibliographystyle{plain}

 \begin{abstract}
  This paper constructs cellular resolutions for classes of noncommutative algebras,  analogous to those introduced by Bayer--Sturmfels~\cite{BayerSturmfels} in the  commutative case. To achieve this we generalise the dimer model construction of noncommutative crepant resolutions of three-dimensional toric algebras by associating a superpotential and a notion of consistency to toric algebras of arbitrary dimension. For abelian skew group algebras and algebraically consistent dimer model algebras, we introduce a cell complex $\Delta$ in a real torus whose cells describe uniformly all maps in the minimal projective bimodule resolution of $A$. We illustrate the general construction of $\Delta$ for an example in dimension four arising from a tilting bundle on a smooth toric Fano threefold to highlight the importance of the incidence function on $\Delta$.
  \end{abstract}

 \maketitle

 \section{Introduction}
 Cellular resolutions were introduced for classes of monomial modules by Bayer--Sturmfels~\cite{BayerSturmfels}, generalising the simplicial resolutions for monomial ideals by Bayer--Peeva--Sturmfels~\cite{BPS} and Peeva--Sturmfels~\cite{PeevaSturmfels}. In this paper we  develop a noncommutative analogue for certain classes of noncommutative algebra, including skew group algebras for finite abelian subgroups of $\SL(n,\kk)$ and superpotential algebras of global dimension three arising from algebraically consistent dimer models. In each case, the minimal bimodule resolution of the algebra is encoded by a collection of cells in a real torus that we call the \emph{toric cell complex}.

We first recall the construction of cellular resolutions of monomial modules over a polynomial ring from Bayer--Sturmfels~\cite{BayerSturmfels}. For a field $\kk$, set $S:=\kk[x_1,\dots,x_n]$ and consider a monomial $S$-module $M$ with generators $m_1,\dots, m_r \in S$.  Let $\Delta$ be a regular cell complex of dimension $n$ with vertex set $\Delta_0=\{1,\dots, r\}$. Label each face $\eta\in \Delta$ by the least common multiple $m_{\eta}$ of the monomials that label its vertices. For any choice of  incidence function $\varepsilon$ on $\Delta$, the labelled regular cell complex $\Delta$ defines a complex of free $\ZZ^n$-graded $S$-modules
\begin{align}
\label{eqn:BSresolution}
\begin{split}
0 \longrightarrow\bigoplus_{\eta \in \Delta_n}S(-m_{\eta}) & \xlongrightarrow{\partial_{n}}  \bigoplus_{\eta^\prime \in \Delta_{n-1}}S(-m_{\eta^\prime})  \xlongrightarrow{\partial_{n-1}} \cdots \\
\cdots & \xlongrightarrow{\partial_2}  \bigoplus_{e \in \Delta_1}S(-m_{e}) \xlongrightarrow{\partial_1}  \bigoplus_{j \in \Delta_0}S(-m_{j})  \xlongrightarrow{\partial_0} M \longrightarrow 0,
 \end{split}
 \end{align}
where $S(-m_{\eta})$ is the free $S$-module with generator $\eta$ in degree $\deg(m_{\eta})$. The maps satisfy 
$$
\partial_{k}(\eta)=\sum_{ \codim(\eta',\eta)=1}\varepsilon(\eta,\eta') \frac{m_{\eta}}{m_{\eta'}}\eta'
$$
for $\eta \in \Delta_k$, where the sum is taken over all codimension-one faces of $\eta$. Necessary and sufficient conditions for the complex \eqref{eqn:BSresolution} to be acyclic are given~\cite[Proposition~1.2]{BayerSturmfels}, and several classes of examples are presented that satisfy the conditions, in which case the complex \eqref{eqn:BSresolution}  is called  a \emph{cellular resolution} of $M$.

Before describing our main results we sketch the notion of consistency for toric algebras. Let $\mathscr{E}=(E_0,\dots,E_r)$ denote a collection  of reflexive sheaves of rank one on a Gorenstein affine toric variety $X$ of dimension $n$. Our main object of study is the \emph{toric algebra} $A:= \End(\bigoplus_{i=0}^r E_i)$ associated to $\mathscr{E}$.  Following Craw--Smith~\cite{CrawSmith}, we introduce the quiver of sections $Q$ of $\mathscr{E}$, and present $A$ as the quotient of the path algebra of $Q$ by an ideal of relations $J_{\mathscr{E}}$. We use the labelling of arrows in $Q$ to define the superpotential $W$ of $\mathscr{E}$ as a formal sum of cycles in the quiver and, on taking certain higher order derivatives of $W$, we obtain an auxilliary ideal of relations $J_W$ in the path algebra of $Q$. The toric algebra $A$ is \emph{consistent} if the ideals $J_{\mathscr{E}}$ and $J_W$ coincide.  Examples include skew group algebras $\kk[x_1,\dots, x_n]*G$ for finite abelian subgroups $G\subset \SL(n,\kk)$ and algebraically consistent dimer model algebras as defined by Broomhead~\cite{Broomhead} in his study of quivers and superpotentials in dimension three.

The notion of consistency is enough to provide a link between $A$ and the toric variety $X$. Indeed, let $\mathcal{M}_{\theta}$ denote the fine moduli space of $\theta$-stable $A$-modules of dimension vector $(1,\dots,1)$ for a generic weight $\theta$, and write $Y_\theta$ for the unique irreducible component of $\mathcal{M}_\theta$ that is birational to $X$.  We establish the following result in Theorem~\ref{thm:cohcomp}:

\begin{theorem}
\label{thm:1.1}
For consistent toric algebras $A$, we present an explicit GIT construction of $Y_\theta$ such that the projective birational morphism $Y_\theta\to X$ is obtained by variation of GIT quotient.
\end{theorem}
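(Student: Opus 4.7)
The plan is to realise both $X$ and $Y_\theta$ as GIT quotients of a single affine toric variety $Z$ by one torus $T$, with $Y_\theta \to X$ exhibited via variation of GIT as the stability parameter moves from a generic $\theta$ to $0$.

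I would begin by working inside the representation space $\Rep(Q,(1,\dots,1))$, an affine space with coordinates indexed by the arrows of $Q$ and carrying the natural action of the torus $T := (\kk^*)^{Q_0}/\kk^*$; by definition, $\mathcal{M}_\theta$ is the $\theta$-GIT quotient of the closed subscheme cut out by $J_{\mathscr{E}}$. Consistency $J_{\mathscr{E}}=J_W$ gives a purely binomial description of the defining ideal of a distinguished irreducible component $Z$ whose $\theta$-quotient will be $Y_\theta$: its equations are precisely the path equivalences forced by taking higher-order derivatives of the superpotential. This binomial presentation realises $Z$ as an affine toric variety acted on by a larger torus $T'\supseteq T$, with quotient $T'/T$ the dense torus of $X$.

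Next I would verify that $X=Z/\!/_0 T$ by computing the $T$-invariant functions on $Z$. These are indexed by cycles in $Q$, and the arrow labelling of $\mathscr{E}$ identifies them with the $T'/T$-monomials generating $\kk[X]$; consistency is precisely what guarantees that no extraneous monomial relations intervene and that the invariant ring coincides with $\kk[X]$ rather than a proper subring or quotient. For generic $\theta$, the quotient $Z/\!/_\theta T$ is then projective over $Z/\!/_0 T = X$, and its unique irreducible component that maps birationally to $X$ must be $Y_\theta$. This simultaneously constructs $Y_\theta$ as an explicit GIT quotient and exhibits $Y_\theta \to X$ as a variation of GIT morphism obtained by specialising $\theta \to 0$.

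The key obstacle is the first step, namely the binomial/toric description of the component $Z$ of $V(J_{\mathscr{E}})$. A priori $V(J_{\mathscr{E}})$ can have several components of different dimensions, and it is only because $J_W$ is manifestly binomial, together with the consistency identity $J_{\mathscr{E}}=J_W$, that the distinguished birational component is toric and has invariant ring equal to $\kk[X]$. Once this toric picture is established, the remaining assertions about projectivity, birationality, and variation of GIT are standard toric GIT, applied to the $T$-action on the affine toric variety $Z$ with the two linearisations $\theta$ and $0$.
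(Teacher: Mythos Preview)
Your overall architecture matches the paper's: take an affine toric subvariety $Z\subset \mathbb{A}^{Q_1}_\kk$ invariant under the torus $T$, show $Z\git_0 T\cong X$, set $Y_\theta:=Z\git_\theta T$, and then identify $Y_\theta$ with the coherent component of $\mathcal{M}_\theta$. But you misplace the role of consistency, and this matters.

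In the paper, $Z$ is defined directly as $\mathbb{V}(I_{\mathscr{E}})$ where $I_{\mathscr{E}}=(y^u-y^v : u-v\in\Ker\pi)$ is the \emph{lattice} ideal of the map $\pi=(\inc,\div)$. This ideal is prime by construction, so $Z$ is already irreducible and toric; there is nothing to extract. The identification $Z\git_0 T\cong X$, equivalently $\kk[\NN(Q)]^T\cong R$, is pure quiver-of-sections combinatorics (Lemma~2.9 identifies $\NN(Q)\cap\Ker(\pi_1)$ with $\sigma^\vee\cap M$) and holds for \emph{every} collection $\mathscr{E}$, consistent or not. Your claim that ``consistency is precisely what guarantees that no extraneous monomial relations intervene and that the invariant ring coincides with $\kk[X]$'' is therefore incorrect: Proposition~2.11 already gives the projective birational VGIT morphism $Y_\theta\to X$ with no consistency hypothesis.

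Where consistency actually enters is in linking $Y_\theta$ to the moduli space $\mathcal{M}_\theta=\mathbb{V}(I_W)\git_\theta T$. The ideal $I_W$ generated by the binomials $y_{p^+}-y_{p^-}$ coming from superpotential relations is contained in $I_{\mathscr{E}}$ but is typically not prime, and $\mathbb{V}(I_W)$ can have many components. Consistency is used in Lemma~3.4 to prove that the vectors $v(p^+)-v(p^-)$ for $p^+-p^-\in J_W$ generate the full lattice $\Ker(\pi)$; this forces the saturation of $I_W$ by the product of variables to equal $I_{\mathscr{E}}$, so $\mathbb{V}(I_{\mathscr{E}})$ is the unique component of $\mathbb{V}(I_W)$ meeting the dense torus of $\mathbb{A}^{Q_1}_\kk$. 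That is the substantive step, and it is what your final paragraph gestures at but attributes to the wrong place in the argument.
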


\noindent  Theorem~\ref{thm:1.1} unifies and extends results by Craw--Maclagan--Thomas~\cite{CMT1} on moduli of McKay quiver representations, and by Mozgovoy~\cite{Mozgovoy} on algebraically consistent dimer models. 

\medskip

We now describe our main result, namely, the construction of the minimal projective bimodule resolution for classes of consistent toric algebras of global dimension $n$ from the toric cell complex $\Delta$ in a real $n$-torus. The key lies in constructing $\Delta$. This is straightforward when $A$ is the skew group algebra for a finite abelian subgroup of $\SL(n,\kk)$, in which case $\Delta $ is a regular cell complex. It is considerably more difficult when $A$ is an algebraically consistent dimer model algebra, and in this case the resulting subdivision $\Delta$ of the torus is not even a CW-complex. Nevertheless,  $\Delta$ shares several key properties with regular cell complexes which explains our use of the `cellular' terminology (see Remark~\ref{rem:CWcomplex}). Notably, it admits an incidence function $\varepsilon\colon \Delta\times \Delta\to \{0,\pm 1\}$. In each class of examples as above and for any choice of incidence function $\varepsilon$ on $\Delta$, the toric cell complex $\Delta$ defines a complex of projective $(A,A)$-bimodules
\begin{align}\label{eqn:NCBSresolution}
\begin{split}
0 \longrightarrow \bigoplus_{\eta \in \Delta_n}A e_{\head(\eta)}\otimes [\eta] \otimes e_{\tail(\eta)}A & \xlongrightarrow{d_{n}} \bigoplus_{\eta^\prime \in \Delta_{n-1}}A e_{\head(\eta^\prime)}\otimes [\eta^\prime] \otimes e_{\tail(\eta^\prime)} \xlongrightarrow{d_{n-1}}\cdots  \\
 \cdots & \xlongrightarrow{d_2}  \bigoplus_{a \in \Delta_1}A e_{\head(a)}\otimes [a] \otimes e_{\tail(a)}A  \xlongrightarrow{d_1}  A \otimes A  \xlongrightarrow{\mu} A \longrightarrow 0,
 \end{split}
 \end{align} 
where each $e_i\in A$ is a primitive idempotent,  where $[\eta]$ are symbols indexed by cells that encode a semigroup grading, and where $\mu\colon  A\otimes A \rightarrow A$ is the multiplication map.  The maps satisfy
 $$
 d_{k}(1 \otimes [\eta] \otimes 1)=\sum_{\codim(\eta',\eta)=1}\varepsilon(\eta,\eta') \overleftarrow{\partial}_{\!\eta'}\eta\otimes[\eta'] \otimes \overrightarrow{\partial}_{\!\eta'}\eta.
 $$
Here, the expressions $\overleftarrow{\partial}_{\!\eta'}\eta$ and $\overrightarrow{\partial}_{\!\eta'}\eta$ are elements of $A$ obtained by right-\ and left-differentiation of cells (see, for example, Definitions~\ref{def:leftrightderivativesmckay} and \ref{def:leftrightderivativesdimer}). These elements measure the difference between $\eta$ and $\eta^\prime$, and provide the noncommutative analogue of the monomial $m_{\eta}/m_{\eta'}$ from \eqref{eqn:BSresolution}. The following result combines Theorems~\ref{thm:McKayresolution} and \ref{thm:dimerresolution}. 
 
\begin{theorem}
\label{thm:1.2}
Let $\Delta$ denote the toric cell complex of an abelian skew group algebra or an algebraically consistent dimer model algebra. 
Then the complex \eqref{eqn:NCBSresolution} is the minimal projective $(A,A)$-bimodule resolution of $A$.
\end{theorem}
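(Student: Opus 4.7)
The plan is to establish three things: that \eqref{eqn:NCBSresolution} is a complex, that it is exact in all positive homological degrees, and that the resulting resolution is minimal. The first and third points are essentially formal consequences of the combinatorial setup, while the bulk of the work lies in proving exactness.

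To verify $d_{k-1}\circ d_k = 0$, I would combine the cocycle identity for the incidence function $\varepsilon$ on $\Delta$ with a factorisation identity for the one-sided derivatives: for each codimension-two inclusion $\eta''\subset\eta$, the codimension-one cells $\eta'$ sitting between them pair up so that the products $\overleftarrow{\partial}_{\!\eta'}\eta\cdot \overleftarrow{\partial}_{\!\eta''}\eta'$ and $\overrightarrow{\partial}_{\!\eta''}\eta'\cdot \overrightarrow{\partial}_{\!\eta'}\eta$ coincide in $A$, and the signs supplied by $\varepsilon$ cancel the contributions in pairs. This is precisely where the consistency hypothesis $J_{\mathscr{E}}=J_W$ enters: it guarantees that the algebraic expressions coming from differentiating the superpotential at successive faces agree inside $A$. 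Minimality is automatic, because each $\overleftarrow{\partial}_{\!\eta'}\eta$ and $\overrightarrow{\partial}_{\!\eta'}\eta$ is a non-trivial path of positive length in the quiver of sections $Q$, so the differentials land in the $(A,A)$-bimodule radical and no idempotent contributions appear.

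For exactness, the plan is to exploit the natural semigroup grading on $A$ coming from the toric/superpotential data, together with the GIT description from Theorem~\ref{thm:1.1}. The algebra decomposes as $A=\bigoplus_{\chi} A_\chi$ by characters (in the skew group case) or by classes of cycles modulo the relations (in the dimer case), and each label $[\eta]$ has a well-defined graded degree matching this decomposition. Evaluating \eqref{eqn:NCBSresolution} on pairs of simple modules $S_i,S_j$, or equivalently decomposing into graded strands, reduces the exactness assertion to proving acyclicity of a family of finite chain complexes of $\kk$-vector spaces indexed by graded pieces, whose $k$th term has a basis given by cells $\eta\in\Delta_k$ with $\head(\eta)=j$, $\tail(\eta)=i$ and prescribed label. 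The problem thereby becomes topological: show that each of these graded subcomplexes of $\Delta$ is contractible, or has homology concentrated in a single prescribed degree.

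The main obstacle lies in verifying these topological contractibility statements, and here the two classes behave quite differently. In the skew group case, $\Delta$ is a genuine regular cell complex on the real torus whose cells are $G$-translates of a standard simplex, and the graded subcomplex cut out by fixing a character is essentially the preimage of a convex region in the universal cover; contractibility then follows from standard convex-geometric arguments. The algebraically consistent dimer case is where the difficulty is concentrated: there $\Delta$ is not a CW-complex, so one cannot simply invoke cellular homology, and instead one must work with the local combinatorial description of $\Delta$ provided in the preceding sections. The strategy is to use algebraic consistency, through its encoding by zigzag paths and perfect matchings, to deformation-retract each relevant graded subcomplex onto a contractible core. Carrying this out case by case completes the exactness argument and identifies \eqref{eqn:NCBSresolution} as the minimal projective bimodule resolution of $A$.
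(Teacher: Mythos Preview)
Your proposal differs substantially from the paper's proof, and contains both factual errors and a genuine gap.

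\textbf{What the paper actually does.} The paper does not prove exactness of \eqref{eqn:NCBSresolution} directly. Instead, for each of the two classes it chooses a specific incidence function and then identifies the resulting complex with a resolution already known to be exact. In the abelian skew group case (Theorem~\ref{thm:McKayresolution}), the choice of signs in \eqref{eqn:revisedcomplex} makes the complex coincide on the nose with the bimodule Koszul complex of $A$, and exactness follows because $A$ is Koszul. In the dimer case (Theorem~\ref{thm:dimerresolution}), the incidence function built in Lemma~\ref{lem:dimerincidence} reproduces exactly the differentials of Broomhead's resolution, whose exactness is established in \cite{Broomhead}. Proposition~\ref{prop:McKaycomplex} shows that changing the incidence function gives an isomorphic complex, so any $\varepsilon$ works. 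No contractibility argument for graded subcomplexes appears anywhere.

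\textbf{Errors in your proposal.} First, in the skew group case the cells of $\Delta$ are faces of unit hypercubes, not $G$-translates of simplices; see Definition~\ref{def:McKaycellcomplex} and the construction preceding it. Second, your minimality argument is wrong as stated: it is not true that both $\overleftarrow{\partial}_{\!\eta'}\eta$ and $\overrightarrow{\partial}_{\!\eta'}\eta$ are paths of positive length. For example, in $d_1(1\otimes[a]\otimes 1)=1\otimes[\head(a)]\otimes a - a\otimes[\tail(a)]\otimes 1$ one derivative is the trivial idempotent in each term, and the same occurs at every level (see \eqref{eqn:revisedcomplex}). What is true is that at least one of the two is a positive-length path, which suffices; but that is not what you wrote.

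\textbf{The gap.} Your exactness plan --- reduce to graded strands and prove contractibility of the corresponding subcomplexes of $\Delta$ --- is a reasonable strategy in the spirit of \cite{BayerSturmfels}, but you do not carry it out. In the dimer case you explicitly say $\Delta$ is not a CW-complex and propose to ``deformation-retract each relevant graded subcomplex onto a contractible core'' using zigzag paths and perfect matchings, without indicating how. This is the entire content of the exactness claim, and it is missing. The paper sidesteps all of this by quoting existing resolutions; if you want a self-contained direct proof along your lines, you would need to supply the retraction argument in full, and that is nontrivial.
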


In each case we refer to \eqref{eqn:NCBSresolution} as the \emph{cellular resolution} of $A$. For the skew group algebra, we recover the Koszul resolution of $A$ for a suitable choice of $\varepsilon$, and our presentation is reminiscent of that from Tate--Van den Bergh~\cite[\S3]{TateVandenbergh}. For an algebraically consistent dimer model algebra, we exhibit an incidence function $\varepsilon$ for which \eqref{eqn:NCBSresolution} is the standard resolution associated to a quiver with superpotential in dimension three studied by Ginzburg~\cite{Ginzburg}, Mozgovoy-Reineke~\cite{MozgovoyReineke}, Davison~\cite{Davison} and Broomhead~\cite{Broomhead}.

To conclude, we conjecture that the toric cell complex can be constructed for any consistent toric algebra $A$ whose global dimension $n$ is equal to the dimension of $X$ and, moreover, that the resulting complex \eqref{eqn:NCBSresolution} is an  $(A,A)$-bimodule resolution of $A$. We provide further evidence for this conjecture by examining a representative example arising from a tilting bundle on a smooth toric Fano threefold. More generally, we anticipate a link between the toric cell complex and the coamoeba from Futaki--Ueda \cite{FutakiUeda} that would describe concretely the mirror Landau-Ginzburg models for smooth toric Fano $n$-folds in the context of Homological Mirror Symmetry. 

A direct application of the construction presented here can be made in the study of quiver gauge theories with AdS/CFT gravity duals. As explained by Davey et.\ al.~\cite{DHMT1}, dimer models can be used to describe the gauge theories duals of a class of AdS/CFT backgrounds arising from M$2$-branes placed at a conical Calabi-Yau fourfold. However, the real meaning of dimers in this context is not yet clear. Developing the relationship between the quivers with superpotentials obtained from our construction in dimension four and those arising from the dimer model will hopefully lead to a deeper understanding of this problem.

\medskip

We now describe the structure of the paper. Section~\ref{sec:toricalgebras} defines toric algebras and investigates the geometry arising from labelled quivers of sections. The superpotential $W$ and the notion of consistency are presented in Section~\ref{sec:superpotential}, leading to a proof of Theorem~\ref{thm:1.1}. Section~\ref{sec:McKay} constructs the toric cell complex $\Delta$ and the resolution \eqref{eqn:NCBSresolution} in the motivating example of an abelian skew group algebra. We prove in Section~\ref{sec:dimers} that our superpotential coincides up to sign with the superpotential for an algebraically consistent dimer model algebra, and we use this result to construct $\Delta$ and the resolution \eqref{eqn:NCBSresolution} in this case. This completes the proof of Theorem~\ref{thm:1.2}. We present in Section~\ref{sec:conjecture} the fourfold example which explains why our superpotentials do not involve signs and we conclude with the statement of the main conjecture.

\medskip
\noindent\textbf{Conventions}
  Write $\kk$ for an algebraically closed field, $\kk^\times$ for the one-dimensional algebraic torus over
   $\kk$, and $\NN$ for the semigroup of nonnegative integers. We do not assume that toric
   varieties are normal. We often write $p^\pm$ as shorthand for `$p^+$ and $p^-$'.  Our pictures of cyclic quivers are drawn `unwrapped' to simplify the illustration, and we label each vertex to indicate those vertices that must be identified to reproduce the cyclic quiver from the picture.

\medskip
\noindent\textbf{Acknowledgements.}  The first author benefited greatly from many conversations with Greg Smith, particular during the MSRI programme in algebraic geometry in 2009.  Thanks also to Christian Haase, Akira Ishii,  Alastair King, Sergey Mozgovoy, Jan Stienstra,  Bal\'{a}zs Szendr\H{o}i and Michael Wemyss  for useful comments and questions.  In addition, we thank the anonymous referee for comments. Both authors are supported by EPSRC grant EP/G004048.

  \section{Toric algebras from a quiver of sections}
  \label{sec:toricalgebras}
  This section introduces the noncommutative toric algebra associated to any collection of rank one reflexive sheaves on a normal affine toric variety $X$.  The labelled quivers that encode these algebras also encode the action of an algebraic torus on an auxilliary toric variety, and variation of the resulting GIT quotient produces partial resolutions of $X$. Our toric algebras generalise slightly those from Broomhead~\cite{Broomhead} (compare also the notion of toric $R$-order from Bocklandt~\cite{Bocklandt}).

\subsection{Toric geometry} 
Let $X = \Spec R$ be a normal affine toric variety of dimension $n$ with a torus-invariant point. Let $M$ denote the character lattice of the dense torus $T_M:=\Spec \kk[M]$ in $X$, and write $N:=\Hom_\ZZ(M,\ZZ)$ for the dual lattice. There is a strongly convex rational polyhedral cone $\sigma\subset N\otimes_\ZZ \RR$ such that $R=\kk[\sigma^\vee\cap M]$.  Write $\sigma(1)$ for the set of one-dimensional faces of $\sigma$,  set $d:=\vert\sigma(1)\vert$, and let $v_\rho\in N$ denote the primitive lattice point on $\rho\in \sigma(1)$. Each $\rho\in \sigma(1)$ determines an irreducible $T_M$-invariant Weil divisor $D_\rho$ in $X$. These divisors generate the lattice $\ZZ^d$ of $T_M$-invariant Weil divisors and the semigroup $\NN^d$ of effective $T_M$-invariant Weil divisors. The map $\deg\colon \ZZ^d\to \Cl(X)$ sending  $D$ to the rank one reflexive sheaf $\mathcal{O}_X(D)$ fits in to the short exact sequence of abelian groups
 \begin{equation}
 \label{eqn:Coxsequence}
 \begin{CD}   
    0@>>> M @>>> \ZZ^d @>{\deg}>> \Cl(X)@>>> 0,
\end{CD}
\end{equation}
where the injective map sends $u$ to $\sum_{\rho\in \sigma(1)} \langle u,v_\rho\rangle D_\rho$. The Cox ring of $X$ is the polynomial ring $\kk[x_\rho :\rho\in \sigma(1)]$ obtained as the semigroup algebra of $\NN^d$, and we have $R\cong \kk[\NN^d\cap \Ker(\deg)]$. Since $X$ is normal, every rank one reflexive sheaf on $X$ is of the form $\mathcal{O}_X(D)$ for some Weil divisor class $D\in \Cl(X)$, and conversely. For an $R$-module $E$, write $E^\vee:=\Hom_{R}(E,R)$. 

The following result is trivial if the sheaves $E$ and $F$ are invertible. 

\begin{lemma}
\label{lem:reflexive}
Given rank one reflexive sheaves $E= \mathcal{O}_X(D)$ and $F=\mathcal{O}_X(D^\prime)$, we have that 
\begin{equation}
\label{eqn:reflexive}
\Hom_{\mathcal{O}_X}(E, F) \cong  H^0\big(\mathcal{O}_X(D^\prime-D)\big).
\end{equation}
\end{lemma}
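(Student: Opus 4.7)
The approach is to reduce to the smooth locus where both sheaves become invertible, invoke the standard line-bundle computation there, and then extend back to $X$ using reflexivity. Since $X$ is normal, the singular locus has codimension at least two; let $j\colon U\hookrightarrow X$ denote the inclusion of the smooth locus $U = X\setminus X_{\operatorname{sing}}$. On $U$ every rank-one reflexive sheaf is invertible, so $j^*E\cong \mathcal{O}_U(D|_U)$ and $j^*F\cong \mathcal{O}_U(D^\prime|_U)$, and the usual identification of $\mathcal{H}om$ with the tensor product of the dual for line bundles yields
\[
\mathcal{H}om_{\mathcal{O}_U}(j^*E, j^*F) \;\cong\; (j^*E)^\vee \otimes j^*F \;\cong\; j^*\mathcal{O}_X(D^\prime - D).
\]

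Next I would observe that $\mathcal{H}om_{\mathcal{O}_X}(E, F)$ is itself reflexive. This is the standard fact that $\mathcal{H}om(-, F)$ into a reflexive sheaf $F$ preserves the $S_2$-property, hence reflexivity, on a normal scheme. Consequently both $\mathcal{H}om_{\mathcal{O}_X}(E, F)$ and $\mathcal{O}_X(D^\prime - D)$ are reflexive sheaves on $X$, and each satisfies the canonical identification $\mathcal{G}\cong j_*j^*\mathcal{G}$, since a reflexive sheaf on a normal variety is determined by its restriction to the complement of any closed subset of codimension at least two. Applying $j_*$ to the displayed isomorphism above therefore produces a global sheaf isomorphism $\mathcal{H}om_{\mathcal{O}_X}(E, F) \cong \mathcal{O}_X(D^\prime - D)$.

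Finally, since $X = \Spec R$ is affine, the global-sections functor is exact and intertwines $\Hom$ with $\mathcal{H}om$, giving
\[
\Hom_{\mathcal{O}_X}(E, F) \;=\; H^0\big(X, \mathcal{H}om_{\mathcal{O}_X}(E, F)\big) \;\cong\; H^0\big(\mathcal{O}_X(D^\prime - D)\big),
\]
which is the desired isomorphism. The argument is largely standard reflexive-sheaf yoga and does not present a serious technical obstacle; the only point requiring care is the reflexivity of $\mathcal{H}om_{\mathcal{O}_X}(E, F)$, for which it is essential that the target $F$ be reflexive, as is guaranteed by hypothesis. The lemma thus records the fact that on a normal affine toric variety, rank-one reflexive Hom behaves exactly like it does for invertible sheaves, with any potential discrepancy in codimension two or higher being wiped out by passing to the reflexive hull.
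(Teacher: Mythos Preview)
Your proof is correct and takes a genuinely different route from the paper's argument. The paper proceeds purely at the level of $R$-modules: it uses tensor--hom adjunction to write $\Hom_R(E,F)\cong (E\otimes F^\vee)^\vee$, then identifies $(E\otimes F^\vee)^{\vee\vee}$ with $\mathcal{O}_X(D-D')$ by citing a result on reflexive rank-one sheaves (e.g.\ \cite[Proposition~8.0.6]{CLS}), and finally appeals to Benson~\cite[Lemma~3.4.1(iv)]{Benson} for the reflexivity of the $\Hom$-module. Your argument instead restricts to the smooth locus~$U$, where everything is a line-bundle computation, and then extends back via $\mathcal{G}\cong j_*j^*\mathcal{G}$ for reflexive sheaves on a normal variety. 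Both hinge on the same essential fact---that $\mathcal{H}om_{\mathcal{O}_X}(E,F)$ is reflexive---but your restriction-extension method is arguably more transparent geometrically and avoids juggling iterated duals, while the paper's dualisation argument keeps everything in the category of $R$-modules without needing to pass to an open subset.
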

\begin{proof}
For any $R$-module $E$ and for any reflexive $R$-module $F$, adjunction gives 
\[
\Hom_{R}(E,F)\cong \Hom_{R}\big(E,\Hom_{R}(F^\vee,R)\big) \cong 
\Hom_{R}(E\otimes F^\vee,R) = (E\otimes F^\vee)^\vee,
\]
hence $\Hom_{R}(E,F)^{\vee\vee}\cong (E\otimes F^\vee)^{\vee\vee\vee}$. Now, the global sections functor is an equivalence between the category of coherent sheaves on $X$ and the category of finitely generated $R$-modules, and the composition of $H^0(-)$ with the functor $\mathcal{H}om_{\mathscr{O}_X}(-,\mathscr{O}_X)$ is simply $\Hom_R(-,R)$. In particular, 
\[
\Hom_{\mathcal{O}_X}(E,F)^{\vee\vee} \cong  H^0\big(X,(E\otimes F^\vee)^{\vee\vee\vee}\big).
\]
We now assume that $E= \mathcal{O}_X(D)$ and $F=\mathcal{O}_X(D^\prime)$. Then $F^\vee \cong \mathcal{O}_X(-D^\prime)$ and $(E\otimes F^\vee)^{\vee\vee}\cong \mathcal{O}_X(D-D^\prime)$, see for example Cox--Little--Schenck~\cite[Proposition 8.0.6]{CLS}. Substitute this into the above and apply $\mathcal{O}_X(D-D^\prime)^\vee = \mathcal{O}_X(D^\prime-D)$ to obtain
\[
\Hom_{\mathcal{O}_X}(E, F)^{\vee\vee} \cong  H^0\big(\mathcal{O}_X(D^\prime-D)\big).
\]
The left hand side is reflexive by Benson~\cite[Lemma~3.4.1(iv)]{Benson}. This completes the proof.
\end{proof}

 \subsection{Quivers of sections}
 Let $Q$ be a finite connected quiver with vertex set $Q_0$, arrow set $Q_1$, and maps $\head, \tail \colon Q_1 \to Q_0$ indicating the vertices at the head and tail of each arrow.  A nontrivial path in $Q$ is a sequence of arrows $p = a_k \dotsb a_1$ with $\head(a_{j}) = \tail(a_{j+1})$ for $1 \leq j < k$.  We set $\tail(p) = \tail(a_{1}), \head(p) = \head(a_k)$ and $\supp(p)=\{a_1,\dots, a_k\}$.  A cycle is a path $p$ with $\tail(p) = \head(p)$. Each $i \in Q_0$ gives a trivial path $e_i$ where $\tail(e_i) = \head(e_i) = i$.  The path algebra $\kk Q$ is the $\kk$-algebra whose underlying $\kk$-vector space has a basis of paths in $Q$, where the product of basis elements is the basis element defined by concatenation of the paths if possible, or zero otherwise.  Let $[\kk Q,\kk Q]$ denote the $\kk$-vector space spanned by all commutators in $\kk Q$, so $\kk Q_{\text{cyc}}:=\kk Q/[\kk Q,\kk Q]$ has a basis of elements corresponding to cyclic paths in the quiver.

 For $r\geq 0$, consider a collection $\mathscr{E}:=(E_0,E_1,\dots,E_r)$ of distinct rank one reflexive sheaves on the affine toric variety $X$.  Since $X$ is normal, every such sheaf is of the form $E_i=\mathcal{O}_X(D_i^\prime)$ for some $D_i^\prime\in \Cl(X)$. For $0\leq i, j\leq r$, a $T_M$-invariant section $s \in \Hom_{\mathcal{O}_X}(E_i,E_j)$ is said to be \emph{irreducible} if it does not factor through some $E_k$ with $k\neq i,j$, that is, the section does not lie in the image of the multiplication map 
\[
H^0\big(X,\mathcal{O}_X(D_j^\prime-D_k^\prime)\big)\otimes_\kk H^0\big(X,\mathcal{O}_X(D_k^\prime-D_i^\prime)\big)\longrightarrow H^0\big(X,\mathcal{O}_X(D_j^\prime-D_i^\prime)\big)
\]
for any $k\neq i,j$, where we use the isomorphism \eqref{eqn:reflexive} from Lemma~\ref{lem:reflexive}.    
    
  \begin{definition}
  The \emph{quiver of sections} of $\mathscr{E}$ is the finite quiver $Q$ in which the vertex set $Q_0 = \{ 0, \dotsc,r \}$ corresponds to the sheaves in $\mathscr{E}$, and where the arrows from $i$ to $j$ correspond to the irreducible sections in $\Hom_{\mathcal{O}_X}(E_i,E_j)$.
\end{definition}

\begin{remark}
\begin{enumerate}
\item Lemma~\ref{lem:reflexive} writes $\Hom_{\mathcal{O}_X}(E_i,E_j)$ in terms of Weil divisors. To construct $Q$ in practice,  write each $E_i\in \mathscr{E}$ as $E_i=\mathcal{O}_X(D_i^\prime)$ for some $D_i^\prime\in \Cl(X)$, and compute for every $i,j\in Q_0$ the vertices of the polyhedron $\conv(\NN^d\cap \deg^{-1}(D^\prime_i-D_j^\prime))$ to obtain the $T_M$-invariant $R$-module generators of $\Hom_{\mathcal{O}_X}(E_i,E_j)$. The arrows of $Q$ correspond to the generators of irreducible maps.
\item The quiver $Q$ depends only on differences of effective Weil divisors on $X$. We normalise by choosing $E_0:= \mathcal{O}_X$.
\end{enumerate}
\end{remark}

For $a \in Q_1$, write $\div(a) := \div(s) \in \NN^{d}$ for the divisor of zeroes of the defining section $s \in \Hom_{\mathcal{O}_X}(E_i,E_j)$ and, more generally, for any path $p$ in $Q$ we call $\div(p) := \sum_{a\in \supp(p)} \div(a)$ the \emph{label} of $p$. The labelling monomial is $x^{\div(p)}:= \prod_{a\in \supp(p)} x^{\div(a)}\in \kk[x_\rho : \rho\in \sigma(1)]$.
     
\begin{definition}
Consider the two-sided ideal 
  \[
  J_{\mathscr{E}}:= \big( p^+-p^- \in \kk Q \mid \head(p^+)=\head(p^-),
 \tail(p^+)=\tail(p^-), \div(p^+) = \div(p^-)\big)
 \]
 in the path algebra $\kk Q$. The quotient $A_{\mathscr{E}}:= \kk Q/J_{\mathscr{E}}$ is the \emph{toric algebra} of the collection $\mathscr{E}$, and the pair $(Q,J_{\mathscr{E}})$ is the \emph{bound quiver of sections} of the collection $\mathscr{E}$. The phrase `bound quiver' is a synonym for `quiver with relations'.
 \end{definition}

 \begin{lemma}
 \label{lem:algebra}
 For $r\geq 0$ and for $\mathscr{E}=(E_0,E_1,\dots,E_r)$, the quiver of sections $Q$ of $\mathscr{E}$ is strongly connected and $A_{\mathscr{E}}\cong \End_R\bigl( \bigoplus_{i\in Q_0} E_i \bigr)$. In particular, the centre $Z(A_{\mathscr{E}})$ is isomorphic to $R$.
 \end{lemma}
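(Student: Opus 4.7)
The plan is to prove the three assertions in turn, with the bulk of the work concentrated in the algebra isomorphism.

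First I would establish strong connectivity. For any $i,j \in Q_0$, Lemma~\ref{lem:reflexive} identifies $\Hom_{\mathcal{O}_X}(E_i,E_j)$ with $H^0(\mathcal{O}_X(D_j'-D_i'))$, and this is nonzero because every rank-one reflexive sheaf on an affine normal toric variety is a fractional ideal of $R$ and hence admits nonzero global sections; equivalently, the preimage $\NN^d\cap\deg^{-1}(D_j'-D_i')$ is nonempty. Choose a nonzero $T_M$-invariant section $s$. If $s$ is irreducible it is already an arrow; otherwise, by definition, $s$ lies in the image of the multiplication map through some $E_k$, and since the weight-$\div(s)$ space of $\Hom_R(E_i,E_j)$ is one-dimensional we can extract a factorisation $s=t_2t_1$ with $\div(s)=\div(t_1)+\div(t_2)$. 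Both summands are nonzero effective divisors, because the sheaves in $\mathscr{E}$ are pairwise non-isomorphic and so there are no nontrivial character-zero maps between distinct $E_i$. Induction on $|\div(s)|$ then decomposes $s$ into a composition of irreducible sections, producing a directed path from $i$ to $j$ in $Q$.

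Next I would produce the isomorphism $A_{\mathscr{E}}\cong \End_R(\bigoplus_i E_i)$. Realise each $E_i$ as the fractional ideal $\mathcal{O}_X(D_i')$ inside the function field of $X$, so that for each triple $(i,j,D)$ with $D\in\NN^d$ and $\deg(D)=D_j'-D_i'$ there is a canonical $T_M$-invariant section $\chi_D^{i,j}\in\Hom_R(E_i,E_j)$, and composition is literal multiplication of monomials, $\chi_E^{j,k}\cdot\chi_D^{i,j}=\chi_{D+E}^{i,k}$. Define $\Phi\colon \kk Q\to \End_R(\bigoplus_i E_i)$ by sending $e_i$ to the identity on $E_i$ and each arrow $a$ to $\chi_{\div(a)}^{\tail(a),\head(a)}$. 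Then $\Phi(p)=\chi_{\div(p)}^{\tail(p),\head(p)}$ for any path $p$, which depends only on the head, tail, and divisor labels of $p$; so $\Phi$ kills $J_{\mathscr{E}}$ and descends to $A_{\mathscr{E}}$. Surjectivity follows because $\End_R(\bigoplus_i E_i)$ is the $\kk$-span of the $\chi_D^{i,j}$ and each such section decomposes into irreducible factors as in the previous paragraph. Injectivity is a weight argument: both sides are graded by $T_M$, and $\Phi$ sends the weight-$D$ piece of $e_jA_{\mathscr{E}}e_i$, spanned by any single path from $i$ to $j$ with divisor $D$ (all others being identified modulo $J_{\mathscr{E}}$), isomorphically onto the one-dimensional weight line $\kk\chi_D^{i,j}$.

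Finally, the centre assertion is a Schur-type argument. The $e_i$ are mutually orthogonal idempotents summing to the identity, so any $z\in Z(A_{\mathscr{E}})$ decomposes as $z=\sum_i z_i$ with $z_i\in e_iA_{\mathscr{E}}e_i=\End_R(E_i)\cong R$. Commuting $z$ with the image of each arrow $a\colon i\to j$ gives $z_j\chi_{\div(a)}^{i,j}=\chi_{\div(a)}^{i,j}z_i$ in the function field, hence $z_i=z_j$, and strong connectivity forces all $z_i$ to coincide, giving $Z(A_{\mathscr{E}})\cong R$.

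The main subtlety is the coherent choice of scalars in defining $\Phi$: the relations in $J_{\mathscr{E}}$ demand that two paths with matching labels map to the \emph{same} element, not merely to scalar multiples of one another. Placing every $E_i$ inside the common function field of $X$ (equivalently, using the Cox-ring presentation \eqref{eqn:Coxsequence}) sidesteps this issue because composition of the $\chi_D^{i,j}$ is then unambiguous multiplication of monomials.
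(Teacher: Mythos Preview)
Your proof is correct and, in fact, more self-contained than the paper's. The paper handles strong connectivity by choosing an interior lattice point $u\in\sigma^\vee\cap M$ (so that all $\mu_\rho=\langle u,v_\rho\rangle>0$) and then observing that sufficiently large multiples of $\sum_\rho\mu_\rho D_\rho$ dominate any fixed divisor, thereby exhibiting effective representatives in both classes $[D_i']$ and $-[D_i']$. Your argument reaches the same conclusion more abstractly (any rank-one reflexive sheaf on affine $X$ is a nonzero $T_M$-graded fractional ideal, hence has a torus-invariant section), and then adds the explicit descent on $|\div(s)|$ to factor sections into arrows---a step the paper leaves implicit. For the algebra isomorphism, the paper simply cites \cite[Proposition~3.3]{CrawSmith}; your explicit construction of $\Phi$, with the function-field embedding fixing all scalar ambiguities, is essentially what that citation contains, and your weight-by-weight injectivity check is exactly the right way to see that $\ker\Phi=J_{\mathscr{E}}$. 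For the centre, the paper sketches a map $\kk Q_{\mathrm{cyc}}\to R$ sending a cycle to its labelling monomial and argues that this descends to an isomorphism $Z(A_{\mathscr{E}})\to R$; your Schur-type argument (idempotent decomposition plus commutation with arrows and strong connectivity to force all components equal) is a genuinely different and arguably cleaner route to the same conclusion.
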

 \begin{proof}
 The top-dimensional cone $\sigma^\vee\subset M\otimes_\ZZ \RR$ is obtained by slicing the cone $\RR^d_{\geq 0}$ by $\Ker(\deg)$, so there exists $u\in \sigma^\vee\cap M$ such that the lattice point $\sum_\rho \langle u,v_\rho\rangle D_\rho$ lies in the interior of $\RR^d_{\geq 0}$. For each $\rho\in \sigma(1)$, set $\mu_\rho:= \langle u,v_\rho\rangle >0$. For $i\in Q_0$, write $E_i=\mathcal{O}_X(D)$ where $D=\sum_{\rho\in \sigma(1)} \lambda_\rho D_\rho$ and choose $k, l\in \ZZ$ satisfying $k\mu_\rho\leq \lambda_\rho\leq l\mu_\rho$ for all $\rho\in \sigma(1)$. Then $\sum_{\rho\in \sigma(1)} (l\mu_\rho-\lambda_\rho)D_\rho$ and $\sum_\rho (\lambda_\rho-k\mu_\rho)D_\rho$ are effective divisors, so both $\Hom_R(E_0,E_i)$ and $\Hom_R(E_i,E_0)$ are nonempty. It follows that $Q$ is strongly connected. The stated isomorphism of $\kk$-algebras follows as in the proof of \cite[Proposition~3.3]{CrawSmith}. To compute the centre, consider the $\kk$-linear map $\kk Q_{\mathsf{cyc}}\to R$ determined by sending a cycle $p$ to the section $x^{\div(p)}$. This map is surjective by construction of $Q$.  Since the centre of $A_{\mathscr{E}}$ is generated by $J_{\mathscr{E}}$-equivalence classes of cycles in $Q$,  the isomorphism $Z(A_{\mathscr{E}})\to R$ follows after taking equivalence classes modulo $J_{\mathscr{E}}$.
  \end{proof}

\begin{example}
For any $X=\Spec(R)$, the quiver of sections $Q$ of the trivial collection $\mathscr{E} = (\mathcal{O}_X)$ has one vertex. If $X=\Spec(\kk)$ then $Q_1=\emptyset$ and $A\cong \kk$. Otherwise, $Q$ has $m$ loops where the labelling divisors $\div(a_1),\dots,\div(a_m)$ are the elements in the Hilbert basis of the semigroup $\sigma^\vee\cap M$. The $\kk$-algebra epimorphism $\kk Q\to R$ sending $a_i\mapsto x^{\div(a_i)}$ for $1\leq i\leq m$ has kernel $J_{\mathscr{E}}$, so the toric algebra $A_{\mathscr{E}}$ is isomorphic to the coordinate ring $R$. In particular, coordinate rings of normal affine toric varieties are toric algebras. 
\end{example}

\begin{example}
\label{exa:F1tilting}
Let $\sigma$ be the cone in $\RR^3$ generated by $v_1=(1,0,1)$, $v_2= (0,1,1)$, $v_3= (-1,1,1)$, $v_4= (0,-1,1)$, so $\sigma$ is the cone over the lattice polygon shown in Figure~\ref{fig:tiltingF1}(a). 
   \begin{figure}[!ht]
    \centering
      \subfigure[Lattice polygon]{
       \psset{unit=1cm}
     \begin{pspicture}(0,-1)(2.5,2.6)
       \psline{*-*}(0,2.5)(1.25,0)
       \psline{*-}(1.25,0)(2.5,1.25)
       \psline{*-}(2.5,1.25,0)(1.25,2.5)
       \psline{*-*}(1.25,2.5)(0,2.5)
       \psdot(0,0)
       \psdot(1.25,0)
       \psdot(2.5,0)
       \psdot(0,1.25)
       \psdot(1.25,1.25)
       \psdot(2.5,1.25)
       \psdot(0,2.5)
       \psdot(1.25,2.5)
       \psdot(2.5,2.5)
       \rput(-0.3,2.5){$v_3$}       
        \rput(0.9,0){$v_4$}
       \rput(1.6,2.5){$v_2$}
       \rput(2.5,0.8){$v_1$}
       \end{pspicture}
       }
      \qquad  \qquad
      \subfigure[Cyclic quiver of sections]{
        \psset{unit=1.3cm}
        \begin{pspicture}(-0.6,-0.3)(2.6,3.1)
        \cnodeput(0,0){A}{0}
        \cnodeput(2,0){B}{1} 
        \cnodeput(0,1.5){C}{2}
        \cnodeput(2,1.5){D}{3}
        \cnodeput(2,3){E}{0} 
        \psset{nodesep=0pt}
        \nccurve[angleA=10,angleB=170]{->}{A}{B}\lput*{:U}{\small{$x_1$}}
        \nccurve[angleA=345,angleB=195]{->}{A}{B}\lput*{:U}{\small{$x_3$}}
        \nccurve[angleA=90,angleB=270]{->}{A}{C}\lput*{:270}{\small{$x_4$}}
        \ncline{->}{B}{C}\lput*{:215}{\small{$x_2$}}
        \nccurve[angleA=90,angleB=270]{->}{B}{D}\lput*{:270}{\small{$x_4$}}
        \nccurve[angleA=10,angleB=170]{->}{C}{D}\lput*{:U}{\small{$x_1$}}
        \nccurve[angleA=345,angleB=195]{->}{C}{D}\lput*{:U}{\small{$x_3$}}
        \nccurve[angleA=125,angleB=235]{->}{D}{E}\lput*{:U}{\small{$x_4$}}
        \nccurve[angleA=90,angleB=270]{->}{D}{E}\lput*{:U}{\small{$x_1x_2$}}
        \nccurve[angleA=55,angleB=305]{->}{D}{E}\lput*{:U}{\small{$x_2x_3$}}
        \end{pspicture}}
      \qquad \qquad  
      \subfigure[Listing the arrows]{
        \psset{unit=1.3cm}
 \begin{pspicture}(-0.6,-0.3)(2.6,3)
        \cnodeput(0,0){A}{$0$}
        \cnodeput(2,0){B}{1} 
        \cnodeput(0,1.5){C}{2}
        \cnodeput(2,1.5){D}{3}
        \cnodeput(2,3){E}{0} 
        \psset{nodesep=0pt}
        \nccurve[angleA=10,angleB=170]{->}{A}{B}\lput*{:U}{\small{$a_1$}}
        \nccurve[angleA=345,angleB=195]{->}{A}{B}\lput*{:U}{\small{$a_2$}}
        \nccurve[angleA=90,angleB=270]{->}{A}{C}\lput*{:270}{\small{$a_3$}}
        \ncline{->}{B}{C}\lput*{:215}{\small{$a_4$}}
        \nccurve[angleA=90,angleB=270]{->}{B}{D}\lput*{:270}{\small{$a_5$}}
        \nccurve[angleA=10,angleB=170]{->}{C}{D}\lput*{:U}{\small{$a_6$}}
        \nccurve[angleA=345,angleB=195]{->}{C}{D}\lput*{:U}{\small{$a_7$}}
        \nccurve[angleA=125,angleB=235]{->}{D}{E}\lput*{:U}{\small{$a_8$}}
        \nccurve[angleA=90,angleB=270]{->}{D}{E}\lput*{:U}{\small{$a_9$}}
        \nccurve[angleA=55,angleB=305]{->}{D}{E}\lput*{:U}{\small{$a_{10}$}}
        \end{pspicture}
        }
    \caption{A quiver of sections for a collection}
  \label{fig:tiltingF1}
  \end{figure}
  For $1\leq \rho\leq 4$, write $D_\rho$ for the Weil divisor in $X=\Spec \kk[\sigma^\vee\cap \ZZ^3]$ corresponding to the ray of $\sigma$ generated by $v_\rho$. The group $\Cl(X)$ is the quotient of the free abelian group generated by $\mathcal{O}_X(D_1)$ and $\mathcal{O}_X(D_4)$, by the subgroup generated by $\mathcal{O}_X(D_1+2D_4)$.  The quiver of sections $Q$ of $\mathscr{E} = \big(\mathcal{O}_X, \mathcal{O}_X(D_1),\mathcal{O}_X(D_4), \mathcal{O}_X(D_1+D_4)\big)$ is the cyclic quiver from Figure~\ref{fig:tiltingF1}(b); the quiver is shown in $\ZZ^2$, but $\mathcal{O}_X\sim\mathcal{O}_X(D_1+2D_4)$. For $a\in Q_1$ we have $x^{\div(a)}\in \kk[x_1,x_2,x_3,x_4]$, and
  \[
J_{\mathscr{E}}= \left(\begin{array}{c} \! a_6a_3-a_5a_1, \; a_7a_3-a_5a_2, \; a_7a_4a_1-a_6a_4a_2,\; a_3a_9-a_4a_1a_8,\; a_{3}a_{10}-a_4a_2a_8 \! \\
 \! a_2a_9-a_{1}a_{10}, \; a_1a_8a_7-a_2a_8a_6, \; a_9a_7-a_{10}a_{6},\; a_8a_6a_4-a_9a_5,\; a_{10}a_{5}-a_8a_7a_4 \! 
 \end{array}\right).
\]
defines the noncommutative toric algebra $A_{\mathscr{E}}=\kk Q/J_{\mathscr{E}}$.
\end{example}

\subsection{Polyhedral geometry}
The characteristic functions $\chi_{i} \colon Q_0 \to \ZZ$ and $\chi_{a} \colon Q_1 \to \ZZ$ for $i \in Q_0$ and $a \in Q_1$ form the
 standard integral bases of the vertex space $\ZZ^{Q_0}$ and the arrow space $\ZZ^{Q_1}$ respectively.  The incidence map 
 $\inc
 \colon \ZZ^{Q_1} \to \ZZ^{Q_0}$ defined by setting 
 $\inc(\chi_{a})=\chi_{\head(a)} - \chi_{\tail(a)}$ has image equal to
 the sublattice $\Wt(Q) \subset \ZZ^{Q_0}$ of functions $\theta \colon
 Q_0 \to \ZZ$ satisfying $\sum_{i \in Q_0} \theta_i = 0$. Generalising \cite[Definition~3.2]{CMT1} (compare also \cite{CrawSmith}), we define
 \[
 \pi := (\inc,\div)\colon \ZZ^{Q_{1}} \to \Wt(Q) \oplus \ZZ^{d}
 \] 
to be the $\ZZ$-linear map sending $\chi_{a}$ to $\bigl( \chi_{\head(a)} - \chi_{\tail(a)}, \div(a) \bigr)$ for $a\in Q_1$. Let $\ZZ(Q)$ and $\NN(Q)$ denote the image under $\pi$ of the lattice $\ZZ^{Q_1}$ and the subsemigroup $\NN^{Q_1}$ respectively, and write $\kk[\ZZ(Q)]$ and $\kk[\NN(Q)]$ for the semigroup algebras. Let $\pi_1\colon \ZZ(Q)\to \Wt(Q)$ and $\pi_2\colon \ZZ(Q)\to \ZZ^{d}$ denote the first and second projections respectively, and define a group homomorphism $\nu\colon\Wt(Q)\to \Cl(X)$ by setting $\nu(\chi_i)=E_i$ for all $i\in Q_0$. 
  
 \begin{lemma}
 \label{lem:diagram}
 There is a commutative diagram of abelian groups
 \begin{equation}
 \label{eqn:diagram}
  \begin{CD}   
    0@>>> M  @>>> \ZZ(Q)    @>{\pi_1}>> \Wt(Q)@>>> 0\\
     @.   @|            @V{\pi_2}VV   @V{\nu}VV      @.          \\
0 @>>> M @>>> \ZZ^{d}  @>{\deg}>> \Cl(X)   @>>> 0 \\
 \end{CD}
 \end{equation}
 where $\pi_2$ identifies the subsemigroup $\NN(Q)\cap \Ker(\pi_1)$ with $\sigma^\vee\cap M=\NN^d\cap \Ker(\deg)$. In particular, the rank of the lattice $\ZZ(Q)$ is $n+r$.
 \end{lemma}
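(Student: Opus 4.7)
The plan is to verify the diagram in stages: commutativity, exactness of the top row (granting existence of the leftmost map via the identification with $\sigma^\vee \cap M$), and the rank computation as a final corollary.

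First I would check commutativity of the right square on the generators $\chi_a \in \ZZ^{Q_1}$, $a\colon i \to j$. By definition $\pi(\chi_a) = (\chi_j - \chi_i, \div(a))$, so $\nu \circ \pi_1$ sends this to $E_j - E_i$ in $\Cl(X)$. On the other hand, the defining section $s$ of $a$ lies in $H^0(\mathcal{O}_X(D_j' - D_i'))$ by Lemma~\ref{lem:reflexive} (where $E_i = \mathcal{O}_X(D_i')$), so its divisor of zeroes differs from $D_j' - D_i'$ by a principal divisor and $\deg(\div(a)) = E_j - E_i$ in $\Cl(X)$. Thus $\nu\circ\pi_1$ and $\deg\circ\pi_2$ agree on a generating set.

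The core step is to identify $\pi_2(\NN(Q)\cap\Ker(\pi_1))$ with $\sigma^\vee\cap M$, as this is what allows us to define the top row. The inclusion $\subseteq$ is immediate: for $w = \pi(\sum n_a\chi_a)$ with $n_a \geq 0$ and $\pi_1(w)=0$, commutativity gives $\deg(\pi_2(w)) = \nu(\pi_1(w)) = 0$, so $\pi_2(w) \in \NN^d\cap\Ker(\deg) = \sigma^\vee\cap M$. The reverse inclusion is where I expect the only real subtlety, and this is exactly what Lemma~\ref{lem:algebra} provides: the $\kk$-linear map $\kk Q_{\mathsf{cyc}} \to R$ sending a cycle $p$ to $x^{\div(p)}$ is surjective, so for every $u \in \sigma^\vee\cap M$ there exists a cycle $p$ in $Q$ with $\sum_\rho \langle u,v_\rho\rangle D_\rho = \div(p)$. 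Writing $p$ as a sum over its arrows with multiplicity produces an element of $\NN^{Q_1}$ whose incidence is zero (because $p$ is a cycle), and whose image under $\pi_2$ is $u$. The hard part is recognizing that this surjectivity lemma, itself proved using strong connectedness from Lemma~\ref{lem:algebra}, is precisely the nontrivial input needed.

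Once this identification is established, the rest is formal. Injectivity of $\pi_2$ restricted to $\Ker(\pi_1)$ is trivial because any $w \in \ZZ(Q)\subseteq\Wt(Q)\oplus\ZZ^d$ with both projections zero vanishes. Since $\sigma^\vee$ is full dimensional, $\sigma^\vee\cap M$ generates $M$ as a group, so $\pi_2\bigl(\Ker(\pi_1)\bigr) = M$, and we define the top horizontal map $M \to \ZZ(Q)$ as the inverse of this isomorphism onto $M$; by construction the left square commutes. Surjectivity of $\pi_1\colon\ZZ(Q)\to\Wt(Q)$ is built into the definitions since $\Wt(Q) = \Image(\inc)$ and $\pi_1$ is induced by $\inc$. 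Exactness at $\ZZ(Q)$ follows: the image of $M$ is $\Ker(\pi_1)$ by construction, and the kernel of $\pi_1$ is the image of $M$ by the identification above.

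Finally, exactness of the top row yields $\rank\ZZ(Q) = \rank M + \rank\Wt(Q) = n + r$, since $\Wt(Q)\subset\ZZ^{Q_0}$ is the hyperplane of integer functions summing to zero on the $r+1$ vertices of $Q$.
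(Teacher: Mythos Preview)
Your proof is correct and follows essentially the same approach that the paper defers to (it cites \cite[Proposition~4.1]{CMT1} without spelling out details). You have made explicit the key steps: commutativity on generators, the semigroup identification via the surjectivity of $\kk Q_{\mathsf{cyc}}\to R$ from Lemma~\ref{lem:algebra}, and the formal consequences for exactness and rank.
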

\begin{proof}
The right-hand square commutes and the bottom row is exact, so it enough to prove that $\pi_2$ yields a $\ZZ$-linear isomorphism $\Ker(\pi_1)\cong M$ which restricts to an isomorphism of semigroups $\NN(Q)\cap \Ker(\pi_1)\cong\NN^d\cap \Ker(\deg)$. The proof of \cite[Proposition 4.1]{CMT1} generalises to our setting.
\end{proof}

 \begin{remark}
 The semigroup $\NN(Q)$ need not be saturated, see Remark~\ref{rem:notsaturated}.
 \end{remark}

Consider the commutative diagram 
 \begin{equation}
 \label{eqn:dualdiagram}
  \begin{CD}   
    0@<<< N  @<{\psi^*}<<  \ZZ(Q)^\vee   @<<< \Wt(Q)^\vee@<<< 0\\
     @.   @|            @A{\pi_2^*}AA   @AAA      @.          \\
0 @<<< N @<{\iota^*}<< \ZZ^{d}  @<<< \Cl(X)^\vee   @<<< 0 \\
 \end{CD}
 \end{equation}
dual to \eqref{eqn:diagram}.   Let $\{\chi_\rho \mid \rho\in \sigma(1)\}$ the standard basis of $\ZZ^d$. For each $\rho\in \sigma(1)$, the image of $\chi_\rho$ under the map $\iota^*\colon \ZZ^d\to N$ is the primitive generator $v_\rho\in \rho$, so the image of the positive orthant $\{\sum_\rho c_\rho \chi_\rho \mid c_\rho\geq 0\}$ under the linear map $\iota^*\otimes_\ZZ \RR\colon \RR^d \to N\otimes_\ZZ \RR$ is the cone $\sigma$. To establish a similar statement for the top row of \eqref{eqn:dualdiagram}, consider the convex polyhedral cone
 \[
C:= \big\{v\in \ZZ(Q)^\vee\otimes_\ZZ \RR \mid \langle u,v\rangle \geq 0 \text{ for all }u\in \NN(Q)\big\}.
 \]
 
\begin{lemma}
The image of $C$ under $\psi^*\colon \ZZ(Q)^\vee\otimes_\ZZ \RR \to N\otimes_\ZZ \RR$ is the cone $\sigma$.
\end{lemma}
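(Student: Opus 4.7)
The plan is to prove both inclusions $\psi^*(C)\subseteq \sigma$ and $\sigma\subseteq \psi^*(C)$ by exploiting the commutativity $\psi^*\circ \pi_2^* = \iota^*$ of the diagram \eqref{eqn:dualdiagram} together with the identification of $\NN(Q)\cap \Ker(\pi_1)$ with $\sigma^\vee\cap M$ furnished by Lemma~\ref{lem:diagram}.

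For the inclusion $\psi^*(C)\subseteq \sigma$, I would use the description $\sigma = \{w\in N\otimes_\ZZ \RR \mid \langle u,w\rangle \geq 0 \text{ for all } u\in \sigma^\vee\cap M\}$. Given $v\in C$ and $u\in \sigma^\vee\cap M$, the commutativity of the left square of \eqref{eqn:diagram} shows that the image of $u$ in $\ZZ(Q)$ under the top inclusion, call it $\psi(u)$, lies in $\NN(Q)\cap \Ker(\pi_1)$ because Lemma~\ref{lem:diagram} identifies this semigroup with $\sigma^\vee\cap M$ via $\pi_2$. By definition of $C$, this gives
\[
\langle u,\psi^*(v)\rangle = \langle \psi(u),v\rangle \geq 0,
\]
so $\psi^*(v)\in \sigma$ as required.

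For the reverse inclusion, I would produce an explicit lift. Given $w\in \sigma$, use $\iota^*(\RR^d_{\geq 0}) = \sigma$ (an immediate consequence of $\iota^*(\chi_\rho)=v_\rho$, as noted just before the lemma) to choose $c\in \RR^d_{\geq 0}$ with $\iota^*(c)=w$, and set $v:=\pi_2^*(c)\in \ZZ(Q)^\vee\otimes_\ZZ \RR$. Commutativity of \eqref{eqn:dualdiagram} gives $\psi^*(v) = \psi^*\pi_2^*(c) = \iota^*(c) = w$, so it remains only to check that $v\in C$. Any element of $\NN(Q)$ has the form $u' = \pi(\sum_{a\in Q_1} n_a\chi_a)$ with $n_a\in \NN$, and $\pi_2(u') = \sum_{a\in Q_1} n_a \div(a)\in \NN^d$ because each $\div(a)$ is a divisor of zeroes and hence effective. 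Therefore
\[
\langle u',v\rangle = \langle \pi_2(u'), c\rangle = \sum_{a\in Q_1} n_a \langle \div(a),c\rangle \geq 0,
\]
confirming $v\in C$.

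I do not expect any serious obstacle here: the statement is really a dualisation exercise combining Lemma~\ref{lem:diagram} with the effectivity of the labels $\div(a)$. The only subtlety is keeping the direction of maps straight between \eqref{eqn:diagram} and its dual \eqref{eqn:dualdiagram}, and making sure the lift $v=\pi_2^*(c)$ is used (rather than some abstract preimage) so that non-negativity on all of $\NN(Q)$, not just its intersection with $\Ker(\pi_1)$, is immediate from $c\in \RR^d_{\geq 0}$.
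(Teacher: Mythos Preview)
Your proof is correct and complete. Both inclusions are argued cleanly: the first via pairing against $\sigma^\vee\cap M$ through the identification of Lemma~\ref{lem:diagram}, and the second by the explicit lift $v=\pi_2^*(c)$ together with effectivity of the labels $\div(a)$.

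The paper takes a different route. Rather than prove the two inclusions directly, it observes that the identification of $\NN(Q)\cap\Ker(\pi_1)$ with $\sigma^\vee\cap M$ from Lemma~\ref{lem:diagram} extends, after tensoring with $\RR$, to an identification of the appropriate slice of $C^\vee$ with $\sigma^\vee$, and then invokes Craw--Maclagan~\cite[Corollary~2.10]{CrawMaclagan}, a general result about images of cones under projections in terms of slices. Your argument is more elementary and entirely self-contained: it avoids the external citation and makes the role of the effectivity of $\div(a)$ transparent, which is precisely what guarantees $\pi_2^*(\RR^d_{\geq 0})\subseteq C$. The paper's approach, by contrast, packages this into a single appeal to a structural lemma, which is tidier if one already has that reference to hand but obscures the simple mechanism at work.
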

\begin{proof}
Lemma~\ref{lem:diagram} shows that $\pi_2$ identifies the semigroup $\NN(Q)\cap \Ker(\pi_1)$ with $\sigma^\vee\cap M$, so the $\RR$-linear extension of $\pi_2$ identifies the slice $C\cap \Ker(\pi_1)$ with the cone $\sigma^\vee$. The result is now immediate from Craw--Maclagan~\cite[Corollary~2.10]{CrawMaclagan}.
\end{proof}

The semigroup $\NN(Q)$ is generated by the vectors $\pi(\chi_a)\in \ZZ(Q)$ arising from arrows $a\in Q_1$, so $C=\{v\in \ZZ(Q)^\vee\otimes_\ZZ \RR \mid \langle v,\pi(\chi_a)\rangle \geq 0 \text{ for all }a\in Q_1\}$. For any face $\Pi$ of $C$, let $\relint(\Pi)$ denote that relative interior of $\Pi$ and define the \emph{support} of $\Pi$ to be 
\[
\supp(\Pi):= \big\{a\in Q_1 \mid  \langle v,\pi(\chi_a)\rangle >0\text{ for all }v\in \relint(\Pi)\big\}.
\]
To explain the geometric significance of the support, note that the toric variety $\Spec \kk[C^\vee\cap \ZZ(Q)]$ is the normalisation of $\Spec\kk[\NN(Q)]$ because $C^\vee\cap \ZZ(Q)$ is the saturation of $\NN(Q)$. As is standard in toric geometry, a face $\Pi$ of $C$ defines the torus-orbit closure in $\Spec\kk[\NN(Q)]$ parametrising points $(w_a) \in\Spec\kk[\NN(Q)]\subseteq \mathbb{A}^{Q_1}_\kk$ whose coordinates satisfy $w_a= 0$ if and only if $a\in \supp(\Pi)$.

\begin{definition}
\label{def:perfectmatching}
A \emph{perfect matching} $\Pi$ of $Q$ is the primitive lattice point on a one-dimensional face of the cone $C$. We also refer to the face itself, or even to the set of arrows $\supp(\Pi)$, as the perfect matching. A perfect matching $\Pi$ is \emph{extremal} if $\psi^*(\Pi) = v_\rho$ for some $\rho\in \sigma(1)$.
\end{definition}

The terminology `perfect matching' is taken from the special case where the algebra $A$ arises from a dimer model as described in Section~\ref{sec:dimers}.

 \begin{proposition}
 \label{prop:perfmatchlabels}
 For $\rho\in \sigma(1)$,  the vector $\Pi_\rho:= \pi_2^*(\chi_\rho)$ is an extremal perfect matching of $Q$. In addition, for every $a\in Q_1$ we have
 \begin{equation}
 \label{eqn:supportlabels}
 a\in \supp(\Pi_\rho)\iff x_\rho \text{ divides }x^{\div(a)}.
 \end{equation}
  \end{proposition}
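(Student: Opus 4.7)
The plan is to use the commutative diagram \eqref{eqn:dualdiagram} together with the preceding lemma $\psi^*(C)=\sigma$. First, for each arrow $a\in Q_1$ I record the pairing
\[
\langle \Pi_\rho, \pi(\chi_a)\rangle \;=\; \langle \pi_2^*(\chi_\rho), \pi(\chi_a)\rangle \;=\; \langle \chi_\rho, \div(a)\rangle,
\]
which equals the coefficient of $x_\rho$ in the monomial $x^{\div(a)}$. Since $\div(a)\in\NN^d$ this is nonnegative, hence $\Pi_\rho\in C$; moreover the pairing is strictly positive precisely when $x_\rho\mid x^{\div(a)}$. The commutativity $\psi^*\pi_2^*=\iota^*$ combined with $\iota^*(\chi_\rho)=v_\rho$ immediately gives $\psi^*(\Pi_\rho)=v_\rho$, so extremality will follow as soon as we know $\Pi_\rho$ sits on a one-dimensional face of $C$.

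The next step is to identify the minimal face $F_\rho$ of $C$ containing $\Pi_\rho$ in its relative interior; from the first step, $F_\rho$ is cut out of $C$ by the equations $\langle v,\pi(\chi_a)\rangle=0$ for arrows $a$ with $x_\rho\nmid x^{\div(a)}$. I would show $F_\rho=\RR_{\geq 0}\Pi_\rho$ as follows. Given $v\in F_\rho$, any lattice point $u$ in the facet of $\sigma^\vee$ dual to $v_\rho$---that is, $u\in\sigma^\vee\cap M$ whose Cox monomial does not involve $x_\rho$---lifts under the identification $\pi_2\colon\NN(Q)\cap\Ker(\pi_1)\xrightarrow{\sim}\sigma^\vee\cap M$ of Lemma~\ref{lem:diagram} to a nonnegative combination $\sum_a n_a\pi(\chi_a)$ of arrows satisfying $x_\rho\nmid x^{\div(a)}$; consequently $\langle v,\pi_2^*(u)\rangle=0$, so $\psi^*(v)$ annihilates the entire dual facet of $\sigma$ and hence lies in $\RR_{\geq 0}v_\rho$. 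Writing $\psi^*(v)=\lambda v_\rho$, the difference $v-\lambda\Pi_\rho$ belongs to $\Ker(\psi^*)=\Wt(Q)^\vee\otimes\RR$. To rule out nonzero kernel contributions along $F_\rho$, I invoke the strong connectedness of $Q$ from Lemma~\ref{lem:algebra}: any $\tilde v\in\Wt(Q)^\vee$ with $\langle\tilde v,\chi_{\head(a)}-\chi_{\tail(a)}\rangle\geq 0$ on every arrow must vanish, because cycles through each vertex force equalities and the differences $\chi_{\head(a)}-\chi_{\tail(a)}$ generate $\Wt(Q)$.

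Once $F_\rho=\RR_{\geq 0}\Pi_\rho$ is established, the equivalence \eqref{eqn:supportlabels} is immediate from the definition of support, since $\relint(F_\rho)=\RR_{>0}\Pi_\rho$ and the pairing computation in the first paragraph pinpoints exactly when $\langle\Pi_\rho,\pi(\chi_a)\rangle>0$. Primitivity of $\Pi_\rho$ on the ray follows from the primitivity of $\chi_\rho$ in $\ZZ^d$ via the identification of $F_\rho$ with $\RR_{\geq 0}\chi_\rho$ through $\pi_2^*$. The main obstacle lies in the middle paragraph: one must carefully book-keep between the lattices $\ZZ(Q)$, $\Wt(Q)$, $\ZZ^d$, $N$ and their duals while simultaneously exploiting the Hilbert basis of $\sigma^\vee\cap M$ provided by Lemma~\ref{lem:diagram} and the strong connectedness of $Q$ to collapse $F_\rho$ to a single ray.
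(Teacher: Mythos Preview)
Your overall strategy is sound and closely parallels the paper's argument, but there is a genuine gap in the middle paragraph. You claim that $\tilde v := v-\lambda\Pi_\rho \in \Wt(Q)^\vee\otimes\RR$ satisfies $\langle\tilde v,\chi_{\head(a)}-\chi_{\tail(a)}\rangle\geq 0$ for \emph{every} arrow $a\in Q_1$, and then invoke strong connectedness of $Q$ to force $\tilde v=0$. But you have only established this inequality (in fact, equality) for arrows $a$ with $x_\rho\nmid x^{\div(a)}$: for those arrows both $\langle v,\pi(\chi_a)\rangle$ and $\langle\Pi_\rho,\pi(\chi_a)\rangle$ vanish. For arrows with $x_\rho\mid x^{\div(a)}$ you only know $\langle v,\pi(\chi_a)\rangle\geq 0$ and $\langle\Pi_\rho,\pi(\chi_a)\rangle>0$, so the difference can be negative. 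Equivalently, $v-\lambda\Pi_\rho$ need not lie in $C$, so the hypothesis of your vanishing criterion is not met.

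What your computation actually shows is that, writing $\tilde v=\pi_1^*(w)$, one has $w_{\head(a)}=w_{\tail(a)}$ for every arrow in the subquiver $Q'$ with arrow set $\{a\in Q_1 : x_\rho\nmid x^{\div(a)}\}$. To conclude $w=0$ you therefore need $Q'$ to be connected, not $Q$. This is exactly the crux of the paper's proof: it identifies $Q'$ as the quiver of sections of the restricted collection $(E_i\vert_{D_\rho})_{i\in Q_0}$ on the toric divisor $D_\rho$, whence Lemma~\ref{lem:algebra} gives strong connectedness of $Q'$. The paper packages this dually, showing the dual face in $C^\vee$ has codimension one via Thaddeus's $0$-stability criterion, but the substantive input is the same. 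Once you supply this missing step, your direct argument on the face $F_\rho$ of $C$ goes through and is a legitimate alternative to the paper's dual-face argument.
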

  \begin{proof}
  We begin by proving the second statement.  An arrow $a$ in $Q$ lies in $\supp(\Pi_\rho)$ if and only if $\langle \Pi_\rho,\pi(\chi_a)\rangle >0$. Since $\Pi_\rho:= \pi_2^*(\chi_\rho)$, we have
   \begin{equation}
  \label{eqn:adjoint}
  \big\langle \Pi_\rho,\pi(\chi_a)\big\rangle = \big\langle \chi_\rho, \pi_2(\pi(\chi_a))\big \rangle = \big\langle \chi_\rho, \div(a)\big\rangle,
 \end{equation}
 and this is positive if and only if $x_\rho$ divides $x^{\div(a)}$. 
 
 For the first statement, note that $\div(a)\in \NN^d$ for all $a\in Q_1$ and hence $  \big\langle \Pi_\rho,\pi(\chi_a)\big\rangle\geq 0$ by \eqref{eqn:adjoint}. It follows that $\Pi_\rho\in C$. Commutativity of diagram \eqref{eqn:dualdiagram} shows that $\psi^*(\Pi_\rho)$ is equal to the primitive lattice point $v_\rho$ in $\rho$, so $\Pi_\rho$ is a primitive lattice point in some face of $C$ that we also denote $\Pi_\rho$. To deduce that $\Pi_\rho$ is an extremal perfect matching it remains to show that the face $\Pi_\rho$ has dimension one or, equivalently, that the dual face $F$ in $C^\vee$ has dimension $n+r-1$. The identification of $\NN(Q)\cap \Ker(\pi_1)$ with $\sigma^\vee\cap M$ from Lemma~\ref{lem:diagram} and saturatedness of $\sigma^\vee\cap M$ enables us to identify $C^\vee\cap \Ker(\pi_1)$ with $\sigma^\vee\cap M$. The face $F_\rho$ of $\sigma^\vee$ dual to the cone $\rho$ has dimension $n-1$, and hence \cite[Lemma~2.5]{CrawMaclagan} gives $F_\rho=F\cap \Ker(\pi_1)$. We claim that $F$ intersects $\Ker(\pi_1)$ transversely, so $F$ has dimension $n-1+r$ as required. To prove the claim, it suffices by Thaddeus~\cite[Lemma~3.3]{Thaddeus} to show that $F$ is $0$-stable or, equivalently, that the quiver $Q^\prime$ with vertex set $Q_0$ and arrow set $Q_1\setminus\supp(\Pi_\rho)$ is strongly connected. In light of \eqref{eqn:supportlabels}, this quiver is obtained from $Q$ by deleting each $a\in Q_1$ for which $x_\rho$ divides $x^{\div(a)}$. It follows that $Q^\prime$ is the quiver of sections on the affine toric variety $D_\rho$ defined by the collection $\mathscr{E}^\prime = (E_i\vert_{D_\rho} : i\in Q_0)$. Lemma~\ref{lem:algebra} implies that $Q^\prime$ is strongly connected.
  \end{proof}

\begin{remark}
Together with the multiplicities from \eqref{eqn:adjoint}, Proposition~\ref{prop:perfmatchlabels} records the fact that extremal perfect matchings encode the labels in a quiver of sections.
\end{remark}

\subsection{Variation of GIT quotient}
The incidence map of $Q$ determines a $\Wt(Q)$-grading of the polynomial ring $\kk[y_a : a\in Q_1]$ obtained as the semigroup algebra of $\NN^{Q_1}$. The algebraic torus $T:=\Hom(\Wt(Q),\kk^\times)$ of rank $r$ then acts on the affine space $\mathbb{A}^{Q_1}_\kk:=\Spec \kk[\NN^{Q_1}]$, where for $(t_i)\in T$ and $(w_a)\in \mathbb{A}^{Q_1}_\kk$ we have
\begin{equation}\label{eqn:Taction}(t\cdot w)_{a} = t_{\head(a)}^{\,} w_{a} t_{\tail(a)}^{-1}.\end{equation} 
 For any weight $\theta \in \Wt(Q)$, let $\kk[\NN^{Q_1}]_{\theta}$ denote the $\theta$-graded piece of the coordinate ring of $\mathbb{A}^{Q_1}_\kk$. The GIT quotient $\mathbb{A}^{Q_1}_\kk\git_\theta T= \Proj(\bigoplus_{j\geq 0} \kk[\NN^{Q_1}]_{j\theta})$ is the categorical quotient of the open subscheme of  $\theta$-semistable points in $\mathbb{A}^{Q_1}_\kk$ by the action of $T$. We say that a weight $\theta\in \Wt(Q)$ is \emph{generic} if every $\theta$-semistable point of $\mathbb{A}^{Q_1}_\kk$ is $\theta$-stable, in which case, $\mathbb{A}^{Q_1}_\kk\git_\theta T$ is the geometric quotient of the open subscheme of  $\theta$-stable points in $\mathbb{A}^{Q_1}_\kk$ by $T$. 

The map $\pi$ induces a surjective map of semigroup algebras $\pi_*\colon \kk[\NN^{Q_1}]\to \kk[\NN(Q)]$ with kernel 
 \begin{equation} 
 \label{eqn:IQ} 
 I_{\mathscr{E}} := \big(y^u-y^v \in \kk[\NN^{Q_1}] \mid u-v\in \Ker(\pi)\big)
 \end{equation}
that cuts out the affine toric subvariety $\mathbb{V}(I_{\mathscr{E}})$ of $\mathbb{A}_\kk^{Q_1}$. The incidence map factors through $\NN(Q)$ to define a $\Wt(Q)$-grading on $\kk[\NN(Q)]$, so the $T$-action on $\mathbb{A}_\kk^{Q_1}$ restricts to an action on $\mathbb{V}(I_{\mathscr{E}})$. For $\theta \in \Wt(Q)$, let $\kk[\NN(Q)]_{\theta}$ denote the $\theta$-graded piece and write 
\[Y_\theta:= \mathbb{V}(I_{\mathscr{E}})\git_\theta T= \Proj\Big(\bigoplus_{j\geq 0} \kk[\NN(Q)]_{j\theta}\Big)\]
for the categorical quotient of the open subset of $\theta$-semistable points in $\mathbb{V}(I_{\mathscr{E}})$.

\begin{proposition}\label{prop:Ytheta}
For any $\theta\in \Wt(Q)$, the toric variety $Y_\theta = \mathbb{V}(I_{\mathscr{E}})\git_\theta T$ admits a projective birational morphism $\tau_\theta\colon Y_\theta \longrightarrow X=\Spec R$ obtained by variation of GIT quotient.\end{proposition}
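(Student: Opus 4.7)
The plan is to identify the affine GIT quotient $Y_0$ at the trivial character with $X$, and then to obtain $\tau_\theta$ from the standard variation of GIT construction.

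First, I would compute the $T$-invariants $\kk[\NN(Q)]^T$. Since the $T$-action comes from the $\Wt(Q)$-grading induced by $\pi_1\colon \ZZ(Q)\to \Wt(Q)$, the invariants are precisely the degree-zero piece $\kk[\NN(Q)\cap \Ker(\pi_1)]$. Lemma~\ref{lem:diagram} asserts that $\pi_2$ restricts to a semigroup isomorphism $\NN(Q)\cap \Ker(\pi_1)\cong \sigma^\vee\cap M$, so $\kk[\NN(Q)]^T\cong R$ and hence $Y_0\cong X$.

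Second, for arbitrary $\theta$, the inclusion of $\kk[\NN(Q)]_0$ as the degree-zero subring of $\bigoplus_{j\geq 0}\kk[\NN(Q)]_{j\theta}$ induces a morphism
$$\tau_\theta\colon Y_\theta=\Proj\Bigl(\bigoplus_{j\geq 0}\kk[\NN(Q)]_{j\theta}\Bigr)\longrightarrow \Spec \kk[\NN(Q)]_0\cong X,$$
which is projective by the Proj construction over an affine base.

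For birationality, I would restrict attention to the dense torus of the irreducible affine toric variety $\mathbb{V}(I_{\mathscr{E}})=\Spec \kk[\NN(Q)]$ (irreducibility follows because a semigroup algebra of a torsion-free semigroup is a domain). This dense torus is $\Hom_\ZZ(\ZZ(Q),\kk^\times)$, and dualising the top row of \eqref{eqn:diagram} shows that $T$ acts freely on it with geometric quotient $T_M=\Hom_\ZZ(M,\kk^\times)$, the dense torus of $X$. Every point of this open subset has trivial $T$-stabilizer and is therefore $\theta$-stable, so the geometric quotient embeds as a dense open of $Y_\theta$ on which $\tau_\theta$ restricts to an isomorphism onto $T_M\subset X$. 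The main subtlety lies in ensuring that the dense torus orbit contributes $\theta$-stable points for nontrivial $\theta$, which either holds by the general toric GIT principle that semistability descends to the open orbit, or else $Y_\theta$ is empty and birationality is vacuous.
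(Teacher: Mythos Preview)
Your first two steps---identifying $Y_0\cong X$ via Lemma~\ref{lem:diagram} and obtaining the projective morphism from the $\Proj$ construction---match the paper exactly.

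The birationality step has a genuine gap. The implication ``trivial $T$-stabiliser, therefore $\theta$-stable'' is false in general: stability requires first that the point be $\theta$-semistable, i.e.\ that some positive-degree $\theta$-semiinvariant not vanish there. You recognise this at the end but defer to a vague ``general toric GIT principle'' or to the possibility that $Y_\theta$ is empty. Neither is adequate. The paper closes the gap with a direct one-line observation: every $j\theta$-semiinvariant of $\kk[\NN(Q)]$ is (the image of) a monomial in the variables $y_a$, and monomials are nowhere zero on the dense torus $\Spec\kk[\ZZ(Q)]\subset\mathbb{V}(I_{\mathscr{E}})$ because all coordinates are nonzero there. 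Hence every point of the dense torus is $\theta$-semistable, and since the $T$-action there is free (your dualisation of the top row of \eqref{eqn:diagram} is correct), the quotient of this open set is the torus $T_M$ for every $\theta$, giving birationality. Note also that $Y_\theta$ is never empty: strong connectedness of $Q$ (Lemma~\ref{lem:algebra}) guarantees that $\inc(\NN^{Q_1})=\Wt(Q)$, so $\theta$-semiinvariants exist for every $\theta$, and the vacuous alternative should not be invoked.
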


\begin{proof}Lemma~\ref{lem:diagram} implies that $R=\kk[\NN^d\cap \Ker(\deg)]\cong \kk[\NN(Q)\cap \Ker(\pi_1)]=\kk[\NN(Q)]^T$, so the variety $X$ is isomorphic to $Y_0=\Spec \kk[\NN(Q)]^T$. Variation of GIT quotient gives the projective morphism $\tau_\theta\colon Y_\theta \to Y_0$, and it remains to show that $\tau_\theta$ is birational. Each $\theta$-semiinvariant monomial in $\kk[\NN^{Q_1}]$ is nowhere zero on the dense torus $\Spec\kk[\ZZ(Q)]$ of $\mathbb{V}(I_\mathscr{E})$ because the coordinate entries of every such point are all nonzero under the embedding of $\Spec(\kk[\ZZ(Q)])$ in the dense torus of $\mathbb{A}^{Q_1}_\kk$. It follows that every point of $\Spec \kk[\ZZ(Q)]$ is $\theta$-semistable. Since every point of $\Spec\kk[\ZZ(Q)]$ is also $0$-semistable, we deduce that the dense torus $\Spec\kk[\ZZ(Q)]\git_\theta T$ of $Y_\theta$ is isomorphic to the dense torus $\Spec\kk[\ZZ(Q)]^T$ of $Y_0$.\end{proof}

The morphism $\tau_\theta\colon Y_\theta\to X$ provides a resolution of singularities precisely when $Y_\theta$ is smooth. Note however that $Y_\theta$ need not even be normal, see Remark~\ref{rem:notsaturated}.

  \section{Consistency for superpotential algebras}
  \label{sec:superpotential}
  This section introduces the superpotential $W$ and the superpotential algebra $A_W$ of a quiver of sections $Q$ on $X$. This algebra need not be isomorphic to the toric algebra, but when it is we say that the toric algebra is consistent. This implies in particular that the toric variety $Y_\theta$ for generic $\theta$ is the coherent component of a fine moduli space $\mathcal{M}_\theta$ of $\theta$-stable $A_W$-modules.
   
  \subsection{Superpotential from anticanonical cycles}
Assume from now on that $X$ is Gorenstein, so $(1,\dots,1)\in \ZZ^d$ lies in the sublattice $M$ and hence $\sum_{\rho\in \sigma(1)} D_\rho$ is linearly equivalent to zero, giving $\omega_X\cong \mathcal{O}_X$. The primitive lattice point $v_\rho\in N$ on each ray in $\sigma$ lies in an affine hyperplane in $N\otimes_\ZZ \RR$, and $\sigma$ is the cone over the convex polytope $P=\conv(v_\rho \mid \rho \in \sigma(1))$. For $r\geq 0$, let $\mathscr{E}:=(\mathcal{O}_X,E_1,\dots,E_r)$ be a collection of distinct rank one reflexive sheaves on $X$ with quiver of sections $Q$. The anticanonical divisor $\sum_{\rho\in \sigma(1)} D_\rho$, or equivalently the monomial $\prod_{\rho\in \sigma(1)} x_\rho$ in the Cox ring of $X$,  singles out a preferred set of cycles in $Q$ as follows.

\begin{definition}
\label{defn:superpotentialnosigns}
A cycle $p$ in $Q$ is an \emph{anticanonical cycle} if $x^{\div(p)}=\prod_{\rho\in \sigma(1)} x_\rho$. Let $\ACcyc$ denote the set of anticanonical cycles. The \emph{superpotential} of the collection $\mathscr{E}$ is the formal sum of cycles $W := \sum_{p\in \ACcyc} p\in \kk Q_{\mathrm{cyc}}$. \end{definition}

Given paths $p,q$ in $Q$, the partial (left) derivative of $q$ with respect to $p$ is
\[
\partial_qp:=\left\{\begin{array}{cl} r & \text{if }p=rq; \\ 0 & \text{otherwise.}\end{array}\right.
\]
Extending by $\kk$-linearity enables us to take the partial derivative of any element of $\kk Q$. Define the partial derivative of the superpotential by setting $\partial_qW:= \partial_q(e_{\head(q)}W e_{\head(q)})$ for any path $q$. The expression $\partial_q W$ is simply the sum of all paths $p$ in $Q$ with tail at vertex $\head(q)$,  head at vertex $\tail(q)$ and labelling monomial $x^{\div(p)} = x_1x_2\cdots x_{d}/x^{\div(q)}$. For example,  $\partial_{e_i}W$ is the sum of all anticanonical cycles that pass through vertex $i\in Q_0$. Consider now the set of paths
\[
\mathscr{P}:= \left\{ q \text{ in }Q \; \bigg| \begin{array}{c} \partial_q W \text{ is the sum of precisely two paths} \\ \text{ that share neither initial nor final arrow}\end{array} \right\}.
\]
The condition that both summands of $\partial_q W$  share neither initial nor final arrow ensures that neither $\partial_{aq} W$ nor $\partial_{qa} W$ is the sum of precisely two paths for $a\in Q_1$.

\begin{definition}
\label{def:Ftermequiv}
The \emph{ideal of superpotential relations} is the two-sided ideal in $\kk Q$ given by
\[
J_{W}:= \big( p^+-p^-\in \kk Q \mid \exists\; q\in \mathscr{P} \text{ such that }\partial_qW = p^++p^-\big).
\] 
The \emph{superpotential algebra} of $\mathscr{E}$ is $A_{W}:= \kk Q/J_{W}$. Two paths $p_\pm$ in $Q$ are F\emph{-term equivalent} if there is a finite sequence of paths $p_+=p_0, p_1, \dots, p_{k+1}=p_-$ in $Q$ such that for every $0\leq j\leq k$ we have $p_j- p_{j+1}=q_1(p^+-p^-)q_2$ for paths $q_1, q_2$ in $Q$ and some relation $p^+-p^-\in J_W$. 
\end{definition}

\begin{remark}
\begin{enumerate}
\item It is sometimes possible to introduce signs in $W$ so that the relevant partial derivatives of $W$ reproduce precisely the generators of $J_W$.  Indeed, this is part of the defining data for dimer model algebras, and it is demonstrated for skew group algebras by Bocklandt--Schedler--Wemyss~\cite{BSW}. However,  we present in Section~\ref{sec:conjecture} a relatively simple example in dimension four for which this cannot be done. 
\item The F-term equivalence classes of paths form a $\kk$-vector space basis for $A_W$.
\end{enumerate}
\end{remark}

For each generator $p^+-p^-$ of $J_W$,  the paths $p^\pm$ share the same head, tail and label so $J_W$ is contained in the ideal $J_{\mathscr{E}}$. If this inclusion is equality then the toric algebra $A_{\mathscr{E}}$ is isomorphic to the superpotential algebra $A_W$. However,  this need not be the case as we now illustrate.

\begin{example}
\label{exa:superpotentialF1tilting}
We consider three collections on the threefold $X$ from Example~\ref{exa:F1tilting}:
\begin{enumerate}
\item[\one] For the collection $\mathscr{E}$ from Example~\ref{exa:F1tilting}, the quiver of sections from Figure~\ref{fig:tiltingF1}(b), contains six cycles $p$ with $\div(p)=x_1x_2x_3x_4$, giving 
\[
W= a_8a_7a_4a_1 + a_8a_6a_4a_2 +  a_9a_5a_2 + a_9a_7a_3 +a_{10}a_6a_3 + a_{10}a_5a_1.
\]
It is easy to check that $J_W$ equals the ideal $J_\mathscr{E}$ from Example~\ref{exa:F1tilting}, so $A_W\cong A_\mathscr{E}$.
 \item[\two] The quiver of sections of $\mathscr{E}^\prime = \big(\mathcal{O}_X, \mathcal{O}_X(D_1),\mathcal{O}_X(D_4)\big)$ and the list of arrows are both shown in Figure~\ref{fig:F1subandsupertilting}(a). We have $W= a_9a_3 + a_7a_4a_1 + a_6a_4a_2$, but in this case $A_W\not\cong A_{\mathscr{E}^\prime}$ because $a_6a_3 - a_5a_1 \in J_{\mathscr{E}^\prime}\setminus J_W$. 
    \begin{figure}[!ht]
    \centering
   \subfigure[Quiver of sections for $\mathscr{E}^{\prime}$]{
      \psset{unit=1.2cm}
   \begin{pspicture}(-0.2,-0.3)(4.7,3.1)
        \cnodeput(0,0){A}{0}
        \cnodeput(2,0){B}{1} 
        \cnodeput(0,1.5){C}{2}
         \cnodeput(2,3){F}{0} 
        \psset{nodesep=0pt}
        \nccurve[angleA=10,angleB=170]{->}{A}{B}\lput*{:U}{\small{$x_1$}}
        \nccurve[angleA=345,angleB=195]{->}{A}{B}\lput*{:U}{\small{$x_3$}}
        \nccurve[angleA=90,angleB=270]{->}{A}{C}\lput*{:270}{\small{$x_4$}}
        \ncline{->}{B}{C}\lput*{:215}{\small{$x_2$}}
        \nccurve[angleA=90,angleB=270]{->}{B}{F}\lput*{:270}{\small{$x_4^2$}}
        \nccurve[angleA=90,angleB=180]{->}{C}{F}\lput*{:U}{\tiny{$x_1x_4$}}
        \nccurve[angleA=70,angleB=200]{->}{C}{F}\lput*{:U}{\tiny{$x_3x_4$}}
        \nccurve[angleA=50,angleB=220]{->}{C}{F}\lput*{:U}{\tiny{$x_1^2x_2$}}
        \nccurve[angleA=30,angleB=240]{->}{C}{F}\lput*{:U}{\tiny{$x_1x_2x_3$}}
        \nccurve[angleA=5,angleB=260]{->}{C}{F}\lput*{:U}{\tiny{$x_2x_3^2$}}
        \rput(3,0){  \cnodeput(0,0){A}{0}
        \cnodeput(2,0){B}{1} 
        \cnodeput(0,1.5){C}{2}
         \cnodeput(2,3){F}{0} 
        \psset{nodesep=0pt}
        \nccurve[angleA=10,angleB=170]{->}{A}{B}\lput*{:U}{\small{$a_1$}}
        \nccurve[angleA=345,angleB=195]{->}{A}{B}\lput*{:U}{\small{$a_2$}}
        \nccurve[angleA=90,angleB=270]{->}{A}{C}\lput*{:270}{\small{$a_3$}}
        \ncline{->}{B}{C}\lput*{:215}{\small{$a_4$}}
        \nccurve[angleA=90,angleB=270]{->}{B}{F}\lput*{:270}{\small{$a_5$}}
        \nccurve[angleA=90,angleB=180]{->}{C}{F}\lput*{:U}{\tiny{$a_6$}}
        \nccurve[angleA=70,angleB=200]{->}{C}{F}\lput*{:U}{\tiny{$a_7$}}
        \nccurve[angleA=50,angleB=220]{->}{C}{F}\lput*{:U}{\tiny{$a_8$}}
        \nccurve[angleA=30,angleB=240]{->}{C}{F}\lput*{:U}{\tiny{$a_9$}}
        \nccurve[angleA=5,angleB=260]{->}{C}{F}\lput*{:U}{\tiny{$a_{10}$}}}
        \end{pspicture}}
      \qquad \qquad  
       \subfigure[Quiver of sections for $\mathscr{E}^{\prime\prime}$]{
        \psset{unit=1.2cm}
        \begin{pspicture}(-0.2,-0.3)(5,3.1)
        \cnodeput(0,0){A}{0}
        \cnodeput(2,0){B}{1} 
        \cnodeput(0,1.5){C}{2}
        \cnodeput(2,1.5){D}{3}
        \cnodeput(0,3){E}{4}  
         \cnodeput(2,3){F}{0} 
        \psset{nodesep=0pt}
        \nccurve[angleA=10,angleB=170]{->}{A}{B}\lput*{:U}{\small{$x_1$}}
        \nccurve[angleA=345,angleB=195]{->}{A}{B}\lput*{:U}{\small{$x_3$}}
        \nccurve[angleA=90,angleB=270]{->}{A}{C}\lput*{:270}{\small{$x_4$}}
        \ncline{->}{B}{C}\lput*{:215}{\small{$x_2$}}
        \nccurve[angleA=90,angleB=270]{->}{B}{D}\lput*{:270}{\small{$x_4$}}
        \nccurve[angleA=10,angleB=170]{->}{C}{D}\lput*{:U}{\small{$x_1$}}
        \nccurve[angleA=345,angleB=195]{->}{C}{D}\lput*{:U}{\small{$x_3$}}
        \nccurve[angleA=90,angleB=270]{->}{C}{E}\lput*{:270}{\small{$x_4$}}
        \nccurve[angleA=10,angleB=170]{->}{E}{F}\lput*{:U}{\small{$x_1$}}
        \nccurve[angleA=345,angleB=195]{->}{E}{F}\lput*{:U}{\small{$x_3$}}
        \nccurve[angleA=90,angleB=270]{->}{D}{F}\lput*{:270}{\small{$x_4$}}        
        \ncline{->}{D}{E}\lput*{:215}{\small{$x_2$}}
           \rput(3,0){        \cnodeput(0,0){A}{0}
        \cnodeput(2,0){B}{1} 
        \cnodeput(0,1.5){C}{2}
        \cnodeput(2,1.5){D}{3}
        \cnodeput(0,3){E}{4}  
         \cnodeput(2,3){F}{0} 
        \psset{nodesep=0pt}
        \nccurve[angleA=10,angleB=170]{->}{A}{B}\lput*{:U}{\small{$a_1$}}
        \nccurve[angleA=345,angleB=195]{->}{A}{B}\lput*{:U}{\small{$a_2$}}
        \nccurve[angleA=90,angleB=270]{->}{A}{C}\lput*{:270}{\small{$a_3$}}
        \ncline{->}{B}{C}\lput*{:215}{\small{$a_4$}}
        \nccurve[angleA=90,angleB=270]{->}{B}{D}\lput*{:270}{\small{$a_5$}}
        \nccurve[angleA=10,angleB=170]{->}{C}{D}\lput*{:U}{\small{$a_6$}}
        \nccurve[angleA=345,angleB=195]{->}{C}{D}\lput*{:U}{\small{$a_7$}}
        \nccurve[angleA=90,angleB=270]{->}{C}{E}\lput*{:270}{\small{$a_8$}}
        \nccurve[angleA=10,angleB=170]{->}{E}{F}\lput*{:U}{\small{$a_{11}$}}
        \nccurve[angleA=345,angleB=195]{->}{E}{F}\lput*{:U}{\small{$a_{12}$}}
        \nccurve[angleA=90,angleB=270]{->}{D}{F}\lput*{:270}{\small{$a_{10}$}}        
        \ncline{->}{D}{E}\lput*{:215}{\small{$a_{9}$}}}
        \end{pspicture}
         }
    \caption{}
  \label{fig:F1subandsupertilting}
  \end{figure}
   \item[\three] The quiver of sections of $\mathscr{E}^{\prime\prime} = \big(\mathcal{O}_X, \mathcal{O}_X(D_1),\mathcal{O}_X(D_4), \mathcal{O}_X(D_1+D_4), \mathcal{O}_X(2D_4)\big)$ and the list of arrows are both shown in Figure~\ref{fig:F1subandsupertilting}(b).  The superpotential is
    \begin{align*}
    W &=  a_{10}a_7a_4a_1 + a_{10}a_6a_4a_2 + a_{11}a_8a_4a_2+ a_{11}a_9a_5a_2 \\ 
     & \quad+ a_{11}a_9a_7a_3 +a_{12}a_8a_4a_1 + a_{12}a_9a_6a_3 + a_{12}a_9a_5a_1,
     \end{align*}
 and  the superpotential algebra $A_W$ is isomorphic to the toric algebra $A_{\mathscr{E}^{\prime\prime}}$ since 
\[J_W= \left(\begin{array}{c} \! a_5a_1 - a_6a_3,\;    a_7a_4a_1 - a_6a_4a_2, \;   a_5a_2 - a_7a_3, \;   a_{10}a_6 - a_{11}a_8 \;  \\ a_{12}a_9a_6 - a_{11}a_9a_7,\: a_{10}a_7 - a_{12}a_8, \: a_{8}a_4 - a_9a_5  \end{array}\right) = J_{\mathscr{E}^{\prime\prime}}.
\] 
 \end{enumerate}

  \end{example}
  
  \subsection{Consistency}
  The following notion is adapted from that of algebraic consistency given by Broomhead~\cite{Broomhead} for algebras that arise from superpotentials in a dimer model (see Section~\ref{sec:dimers}).
 
 \begin{definition}
 \label{def:consistent}
 A collection $\mathscr{E}$ of rank one reflexive sheaves that encodes a superpotential $W$ is \emph{consistent} if the algebras $A_{\mathscr{E}}$ and $A_W$ are isomorphic. In this case, we say that $A_\mathscr{E}$ is consistent, and write $A$ for brevity if the collection $\mathscr{E}$ is clear from the context.
  \end{definition}

  We begin our study of consistent toric algebras by establishing an important property of the labels on arrows.

\begin{proposition}
\label{prop:div(a)}
 If $A$ is consistent then $x^{\div(a)}$ divides $\prod_{\rho\in \sigma(1)} x_\rho$ for every $a\in Q_1$. 
  \end{proposition}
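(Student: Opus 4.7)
The plan is to prove the contrapositive via a structural argument: I will show that consistency forces every arrow $a\in Q_1$ to appear in some anticanonical cycle, from which $x^{\div(a)}\mid\prod_{\rho}x_\rho$ follows at once---any anticanonical cycle containing $a$ can be written $qa$, giving $\div(q)+\div(a)=\sum_{\rho}D_\rho$ with $\div(q)\in\NN^d$, whence $\div(a)\le\sum_{\rho}D_\rho$. Set $i:=\tail(a)$ and $j:=\head(a)$, and suppose for contradiction that $a$ lies in no anticanonical cycle.

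Under this hypothesis, $a$ appears in no term of $W$, and hence in no sub-path $p^\pm$ of any generator $p^+-p^-$ of $J_W$, since every such generator satisfies $qp^\pm\in\ACcyc$ and so any arrow of $p^\pm$ must itself lie in an anticanonical cycle. This yields a \emph{freezing principle}: no F-term move can substitute a sub-path containing $a$, so $a$ is immovable under F-term equivalence, and every F-term descendant of a path $\omega$ containing $a$ at a prescribed position retains $a$ at that position, while the parts of $\omega$ flanking $a$ are modified independently within their own F-term classes. Because $X$ is Gorenstein, $\prod_\rho x_\rho\in R$, and Lemma~\ref{lem:algebra} identifies this monomial with a non-zero central element of $A_\mathscr{E}$; I then choose anticanonical cycles $c_i$ at $i$ and $c_j$ at $j$ representing its restrictions to these two vertices. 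The paths $c_ja$ and $ac_i$ both run from $i$ to $j$ with common label $\sum_\rho D_\rho+\div(a)$, so $c_ja-ac_i\in J_\mathscr{E}$ and these two paths agree in $A_\mathscr{E}$.

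By the freezing principle, the F-term class of $c_ja$ in $A_W$ consists only of paths $c_j'\cdot a$ where $c_j'$ is obtained from $c_j$ by F-term moves and so is an anticanonical cycle at $j$ that remains free of $a$; likewise the F-term class of $ac_i$ consists of paths $a\cdot c_i'$. These two families are disjoint, because matching $c_j'a=ac_i'$ as paths forces the first arrow of $c_j'$ to equal $a$, contradicting the fact that $c_j'$ is anticanonical. Hence $c_ja\ne ac_i$ in $A_W$, contradicting $A_\mathscr{E}\cong A_W$. The main obstacle in executing this plan is rigorously verifying the freezing principle, which requires checking inductively that every replacement sub-path $p^-$ in a $J_W$ generator is itself a sub-path of an anticanonical cycle (hence $a$-free) and that head, tail, and label are preserved, so that the sequence $c_j=c_j^{(0)},c_j^{(1)},\ldots$ remains an anticanonical cycle at $j$ at every stage. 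The loop case $i=j$ requires no special treatment, since the same disjointness argument still produces the contradiction.
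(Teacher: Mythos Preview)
Your argument is correct and cleanly executed. The freezing principle is sound: since every generator $p^+-p^-$ of $J_W$ has $qp^\pm$ anticanonical for some $q\in\mathscr{P}$, an arrow lying in no anticanonical cycle cannot appear in any $p^\pm$, so F-term moves on $c_ja$ (respectively $ac_i$) take place entirely inside $c_j$ (respectively $c_i$) and produce another anticanonical, hence $a$-free, cycle. The disjointness of the two F-term classes then follows because $c_j'a$ has its unique occurrence of $a$ at the rightmost position while $ac_i'$ has it at the leftmost. One small slip: comparing $c_j'a=ac_i'$ as strings forces the \emph{last} (leftmost) arrow of $c_j'$---equivalently the first arrow of $c_i'$---to be $a$, not the first arrow of $c_j'$; either way you land in an anticanonical cycle, so the contradiction stands.

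The paper's proof takes a different shape. It splits into two cases according to whether another arrow $b\neq a$ leaves $\tail(a)$. If so, it compares the two cycles $paqb$ and $qbpa$ at $\tail(a)$: consistency forces these to be F-term equivalent, and tracking the first arrow along the rewriting sequence exhibits a relation with $\tail(p^\pm)=\tail(a)$ containing $a$ in its support. If $a$ is the only arrow out of $\tail(a)$, the paper instead argues directly that every Hilbert basis monomial of $\sigma^\vee\cap M$ is divisible by $x^{\div(a)}$. Your route avoids this case split and is more uniform: by invoking the Gorenstein hypothesis and Lemma~\ref{lem:algebra} to produce anticanonical cycles at \emph{both} endpoints of $a$, you get a single comparison $c_ja$ versus $ac_i$ that works regardless of the local valency of $Q$ at $\tail(a)$. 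The paper's approach, on the other hand, stays entirely at one vertex and uses more elementary building blocks (arbitrary cycles rather than specifically anticanonical ones) in its main case.
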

 \begin{proof}
For $a\in Q_1$,  Lemma~\ref{lem:algebra} implies that $e_{\tail(a)}Ae_{\tail(a)}\cong R$.  We consider two cases. Suppose first that there exists $b\in Q_1\setminus\{a\}$ with $\tail(b)=\tail(a)$. Since $Q$ is strongly connected, there exist paths $p, q$ in $Q$ so that the compositions $pa$ and $qb$ are cycles in $Q$ beginning at vertex $\tail(a)$. Composing in two ways defines cycles $paqb$ and $qbpa$ with the same head, tail and divisor, so $paqb-qbpa\in J_\mathscr{E}$. Consistency forces $J_\mathscr{E}=J_W$, so $paqb$ and $qbpa$ are F-term equivalent. Since $b\neq a$, there must be a relation $p^+-p^-\in J_W$ with $\tail(p^\pm)=\tail(a)$ for which $a$ lies in the support of one of $p^\pm$. Every such relation is obtained as a partial derivative of $W$, so $x^{\div(a)}$ divides $\prod_{\rho\in \sigma(1)} x_\rho$ as required. Suppose otherwise, so there does not exist $b\in Q_1\setminus\{a\}$ with $\tail(b)=\tail(a)$.  Then every cycle in $Q$ from $\tail(a)$ traverses arrow $a$ and hence for every element $u$ in the Hilbert basis of $\sigma^\vee\cap M$, the corresponding monomial $x^u\in R$ is divisible by $x^{\div(a)}$. The monomial $\prod_{\rho\in \sigma(1)} x_\rho$ is a product of such monomials, so $x^{\div(a)}$ divides $\prod_{\rho\in \sigma(1)} x_\rho$ as required.
 \end{proof}

 \begin{corollary}
 \label{cor:allarrowsinW}
  If $A$ is consistent then every arrow in $Q$ arises in an anticanonical cycle and hence in a term of the superpotential $W$.
  \end{corollary}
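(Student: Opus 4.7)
The plan is to combine Proposition~\ref{prop:div(a)} with the Gorenstein hypothesis on $X$. Gorensteinness is precisely the statement that the anticanonical divisor $\sum_{\rho\in\sigma(1)} D_\rho$ is principal, so the monomial $\prod_{\rho\in\sigma(1)} x_\rho$ has trivial class in $\Cl(X)$ and hence equals $x^u$ for some $u\in M\subset \sigma^\vee\cap M$ via the exact sequence \eqref{eqn:Coxsequence}.

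Given an arrow $a\in Q_1$, I would first apply Proposition~\ref{prop:div(a)} to deduce $x^{\div(a)}\mid x^u$, so the quotient $x^u/x^{\div(a)}$ is a well-defined monomial corresponding to the effective Weil divisor $\sum_\rho D_\rho - \div(a)$. A degree calculation in $\Cl(X)$ yields $0-(E_{\head(a)}-E_{\tail(a)}) = E_{\tail(a)}-E_{\head(a)}$, so by Lemma~\ref{lem:reflexive} this monomial represents a nonzero $T_M$-invariant section in $\Hom_{\mathcal{O}_X}(E_{\head(a)},E_{\tail(a)})$. The next step is to realise this section as the label of a path in $Q$: by Lemma~\ref{lem:algebra} the natural $\kk$-algebra map $\kk Q \to A_\mathscr{E} \cong \End_R\bigl(\bigoplus_i E_i\bigr)$ sending each arrow to its defining section is surjective, and the image of a path is a single $T_M$-weight vector given by its labelling monomial. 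Surjectivity therefore forces at least one path $p^\prime$ from $\head(a)$ to $\tail(a)$ with $x^{\div(p^\prime)} = x^u/x^{\div(a)}$. The concatenation $p^\prime a$ is then a cycle at $\tail(a)$ with $x^{\div(p^\prime a)} = x^u = \prod_\rho x_\rho$, so it is an anticanonical cycle in the sense of Definition~\ref{defn:superpotentialnosigns} and hence a summand of $W$ containing $a$ in its support.

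The only mildly subtle point is the existence of the path $p^\prime$ of the right label, but I do not anticipate a substantive obstacle: the surjectivity of $\kk Q \to A_\mathscr{E}$ is built into the construction of $Q$, and the weight decomposition of $\Hom_{\mathcal{O}_X}(E_{\head(a)},E_{\tail(a)})$ into one-dimensional $T_M$-eigenspaces pins down at least one path with the required labelling monomial. The hypothesis of consistency enters the argument exclusively through Proposition~\ref{prop:div(a)}, which is what makes the divisibility step at the very start legitimate.
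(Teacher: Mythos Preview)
Your argument is correct and is precisely the natural unpacking of what the paper leaves implicit: the corollary is stated without proof immediately after Proposition~\ref{prop:div(a)}, and the intended reasoning is exactly the divisibility-plus-Gorenstein-plus-surjectivity chain you describe. Your observation that surjectivity of $\kk Q\to A_{\mathscr{E}}$ together with the $T_M$-weight decomposition forces the existence of a path with the specific label $\prod_\rho x_\rho / x^{\div(a)}$ is the only point requiring care, and you have handled it correctly.
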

  
 We reinterpret this result by lifting $Q$ to an $M$-periodic quiver in $\RR^d$ using the sequence \eqref{eqn:Coxsequence}. The \emph{covering quiver} $\widetilde{Q}$ is the quiver with vertex set $\widetilde{Q}_0=\bigoplus_{i\in Q_0} \deg^{-1}(E_i)$, and with arrow set comprising an arrow $\widetilde{a}$ from each $u\in \deg^{-1}(E_i)$ to $u+\div(a)\in \deg^{-1}(E_j)$ for every $a\in Q_1$ from $i$ to $j$. The \emph{label} of $\widetilde{a}$ in $\widetilde{Q}_1$ is the vector $\div(\widetilde{a}):=\head(\widetilde{a})-\tail(\widetilde{a}) =\div(a)\in \NN^d$.

\begin{remark}
\label{rem:QinQuotient}
The quiver $Q$ can be recovered from $\widetilde{Q}$ by taking the quotient by the action of $M$. The given embedding of $\widetilde{Q}$ in $\RR^d=\ZZ^d\otimes_\ZZ \RR$ induces an embedding of $Q$ in $\RR^d/M$. 
\end{remark}

We now lift the anticanonical cycles from $Q\subset \RR^d/M$ to $\widetilde{Q}\subset \RR^d$. For each $u\in \widetilde{Q}_0$, let $p$ be an anticanonical cycle in $Q$ that passes through vertex $i:=\deg(u)\in Q_0$. An \emph{anticanonical path from $u$ covering $p$} is any path $\widetilde{p}_u$ in $\widetilde{Q}$ from $u$ to $u +(1,\dots,1)\in \deg^{-1}(E_i)$ whose image in $\RR^d/M$ is the cycle $p$ in $Q$.  For $u, u^\prime\in \deg^{-1}(E_i)$, the anticanonical paths from $u$ differ only by translation from the anticanonical paths from $u^\prime$, so we need only study paths from one such vertex. For this, pick a fundamental region in $\RR^d$ for the action of $M$ by choosing a spanning tree in $Q$, and lift to a connected tree in $\widetilde{Q}$. Each $i\in Q_0$ then has a preferred lift $u_i\in \deg^{-1}(E_i)$.

\begin{definition}
\label{def:coveringquiver}
For $i\in Q_0$, let $\widetilde{Q}(i)$ be the quiver in $\RR^d$ with vertex set
\[
\widetilde{Q}_0(i):=\Big\{v\in \widetilde{Q}_0 \mid\exists\; \text{anticanonical path }\widetilde{p}_{u_i} \text{ that touches }v\Big\}
\]
and arrow set
\[
\widetilde{Q}_1(i):=\Big\{a\in \widetilde{Q}_1\mid\exists\; \text{anticanonical path }\widetilde{p}_{u_i} \text{ that traverses }a\Big\}.
\]
\end{definition}

\begin{remark}
\label{rem:widetildeQi}
\begin{enumerate}
\item An anticanonical cycle that passes through a vertex more than once gives rise to more than one anticanonical path in a given quiver $\widetilde{Q}(i)$, see Examples~\ref{exa:trivialaction}-\ref{exa:conifold}.
\item Since we lift only anticanonical cycles,  the vertex set $\widetilde{Q}_0(i)$ is a subset of the set of vertices of the unit hypercube 
$\mathsf{C}(u_i):=\{u_i+(\lambda_1,\dots,\lambda_d)\in \RR^d \mid 0\leq \lambda_j\leq 1 \text{ for }1\leq j\leq d\}$.
\end{enumerate}
\end{remark}

 \begin{example}
 \label{exa:trivialaction}
The quiver of sections $Q$ for the trivial collection $\mathscr{E}=(\mathcal{O}_X)$ on $X=\mathbb{A}^n_\kk$ has one vertex and $n$ loops labelled $x_1, \dots, x_n$. There are $(n-1)!$ anticanonical cycles, and each lifts to $n$ anticanonical paths that emanate from each vertex $u\in \ZZ^n$. The support of the quiver $\widetilde{Q}(i)$ in $\RR^n$ is precisely the support of the set of edges of the unit cube $\mathsf{C}(u_0)$.
\end{example}

\begin{example}
\label{exa:conifold}
For the conifold $X=\Spec \kk[x_1,x_2,x_3,x_4]/(x_1x_2-x_3x_4)$, the quiver of sections $Q$ from Figure~\ref{fig:conifold}(a) defines the consistent toric algebra $A$ studied by Szendr\H{o}i~\cite[Figure~1]{Szendroi}.
  \begin{figure}[!ht]
    \centering
      \subfigure[]{
       \psset{unit=0.8cm}
   \begin{pspicture}(0.1,-0.3)(6,4.8)
      \cnodeput(1.2,2.7){A}{\tiny{0}}
      \cnodeput(4.8,2.7){B}{\tiny{1}}
      \nccurve[angleA=60,angleB=120]{->}{A}{B}\lput*{:U}{\small{$x_1$}}
      \nccurve[angleA=30,angleB=150]{->}{A}{B}\lput*{:U}{\small{$x_2$}}
      \nccurve[angleA=210,angleB=330]{->}{B}{A}\lput*{:180}{\small{$x_3$}}
      \nccurve[angleA=240,angleB=300]{->}{B}{A}\lput*{:180}{\small{$x_4$}}
   \end{pspicture}      }
      \quad\quad      \quad\quad
      \subfigure[]{    
       \psset{unit=0.8cm}
       \begin{pspicture}(0.1,-0.3)(6,5.4)
       \cnodeput(3,5.4){E1}{\tiny{0}}
       \cnodeput(2.2,4.2){D2}{\tiny{1}}
       \cnodeput(5.4,4.2){D4}{\tiny{1}}
       \cnodeput(0,2.7){C1}{\tiny{0}}
       \cnodeput(2.4,2.7){C3}{\tiny{0}}
       \cnodeput(3.6,2.7){C4}{\tiny{0}}
       \cnodeput(6,2.7){C6}{\tiny{0}}  
       \cnodeput(5.4,1.2){B4}{\tiny{1}}
       \cnodeput(2.2,1.2){B2}{\tiny{1}}
       \cnodeput(3,0){A1}{\tiny{0}}
      \psset{linecolor=lightgray}
       \cnodeput(0.6,4.2){D1}{\tiny{}}
       \cnodeput(3.8,4.2){D3}{\tiny{}}
       \cnodeput(1.2,2.7){C2}{\tiny{}}
      \cnodeput(4.8,2.7){C5}{\tiny{}}
       \cnodeput(0.6,1.2){B1}{\tiny{}}
       \cnodeput(3.8,1.2){B3}{\tiny{}}    
       \ncline{->}{A1}{B1}\ncline{->}{A1}{B3}\ncline{->}{B1}{C1}\ncline{->}{B1}{C2}
       \ncline{->}{B1}{C4}\ncline{->}{B2}{C1}\ncline{->}{B2}{C5}\ncline{->}{B3}{C2}
       \ncline{->}{B3}{C3}\ncline{->}{B3}{C5}\ncline{->}{B4}{C5}
       \ncline{->}{C1}{D1}\ncline{->}{C2}{D1}\ncline{->}{C2}{D3}
       \ncline{->}{C3}{D1}\ncline{->}{C4}{D2}\ncline{->}{C4}{D3}\ncline{->}{C5}{D2}
       \ncline{->}{C5}{D4}\ncline{->}{C6}{D3}\ncline{->}{D1}{E1}
       \ncline{->}{D2}{E1}\ncline{->}{D3}{E1}
        \psset{linecolor=black,linewidth=0.04}
       \ncline{->}{A1}{B2}\lput*{:180}{\tiny{$x_1$}}
       \ncline{->}{A1}{B4}\lput*{:U}{\tiny{$x_2$}}
       \ncline{->}{B2}{C1}\lput*{:180}{\tiny{$x_3$}}
       \ncline{->}{B2}{C3}\lput*{:U}{\tiny{$x_4$}} 
       \ncline{->}{B4}{C4}\lput*{:180}{\tiny{$x_3$}} 
       \ncline{->}{B4}{C6}\lput*{:U}{\tiny{$x_4$}} 
       \ncline{->}{C1}{D2}\lput*{:U}{\tiny{$x_2$}}
       \ncline{->}{C3}{D4}\lput*{:U}{\tiny{$x_2$}}
       \ncline{->}{C6}{D4}\lput*{:180}{\tiny{$x_1$}}
       \ncline{->}{D2}{E1}\lput*{:U}{\tiny{$x_4$}}
       \ncline{->}{D4}{E1}\lput*{:180}{\tiny{$x_3$}}
       \ncline{->}{C4}{D2}\lput*{:180}{\tiny{$x_1$}}
      \end{pspicture}
          }
  \caption{(a) Quiver of sections; (b) unit 4-cube $\mathsf{C}(u_0)$ in $\RR^4$ containing $\widetilde{Q}(0)$.}
  \label{fig:conifold} 
  \end{figure}  
  The edges of the unit 4-cube $\mathsf{C}(u_0)$ are shown in grey in Figure~\ref{fig:conifold}(b), where the vertices $u_0$ and $u_0+(1,1,1,1)$ are labelled 0 at the bottom and top of the figure respectively. The four anticanonical paths from $u_0$ which cover the pair of anticanonical cycles in $Q$ define the quiver $\widetilde{Q}(0)$ whose vertices and arrows are shown in black in Figure~\ref{fig:conifold}(b). The quiver $\widetilde{Q}(1)$ is similar.
   \end{example}

\begin{corollary}
\label{cor:arrowsinA}
If $A$ is consistent then the vertex set and arrow set of $\widetilde{Q}\subset \RR^d$ coincides with the $M$-translates in $\RR^d$  of the vertex set and arrow set of $\bigcup_{i\in Q_0} \widetilde{Q}(i)$.
\end{corollary}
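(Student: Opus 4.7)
The inclusion of the right-hand side in the left-hand side is automatic: by construction $\widetilde{Q}(i)\subseteq \widetilde{Q}$ for every $i\in Q_0$, and $\widetilde{Q}$ is invariant under $M$-translation because the quotient of $\widetilde{Q}$ by $M$ recovers $Q$ (Remark~\ref{rem:QinQuotient}). The substantive content is therefore to show that every vertex and every arrow of $\widetilde{Q}$ lies in the $M$-orbit of some vertex or arrow of $\bigcup_{i\in Q_0}\widetilde{Q}(i)$.

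For the vertex set, the plan is as follows. Given $v\in \widetilde{Q}_0$, set $i:=\deg(v)\in Q_0$, so that $v\in \deg^{-1}(E_i)=u_i+M$ and hence $v=u_i+m$ for some $m\in M$. It therefore suffices to prove $u_i\in \widetilde{Q}_0(i)$ for every $i\in Q_0$. Because any anticanonical path from $u_i$ touches its own starting vertex, this reduces to exhibiting at least one anticanonical cycle in $Q$ through vertex $i$. Lemma~\ref{lem:algebra} gives that $Q$ is strongly connected, so vertex $i$ is the tail of some arrow $a\in Q_1$, and Corollary~\ref{cor:allarrowsinW} ensures that $a$ (and hence vertex $i$) appears on an anticanonical cycle in $Q$. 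Lifting this cycle starting at $u_i$ yields the required anticanonical path, so $u_i\in \widetilde{Q}_0(i)$.

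For the arrow set, take $\widetilde{a}\in \widetilde{Q}_1$ and let $a\in Q_1$ be its image in $Q$. Corollary~\ref{cor:allarrowsinW} supplies an anticanonical cycle $p$ in $Q$ whose support contains $a$; choose a presentation of $p$ as a cycle starting at $\tail(a)=:j$. Lifting $p$ to an anticanonical path $\widetilde{p}_{u_j}$ from $u_j$ to $u_j+(1,\dots,1)$ produces a distinguished arrow $\widetilde{a}^{\prime}\in \widetilde{Q}_1(j)$ that covers $a\in Q_1$. Since $\widetilde{a}$ and $\widetilde{a}^{\prime}$ both project to $a$ under the quotient by $M$, their tails (and hence the arrows themselves) differ by a translation by some $m\in M$. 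Hence $\widetilde{a}=\widetilde{a}^{\prime}+m$ lies in the $M$-orbit of $\widetilde{Q}_1(j)\subseteq \bigcup_{i\in Q_0}\widetilde{Q}_1(i)$, completing the argument.

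The only non-formal ingredient is the verification that every arrow, and consequently every vertex, of $Q$ participates in some anticanonical cycle; this is exactly Corollary~\ref{cor:allarrowsinW} combined with the strong connectedness from Lemma~\ref{lem:algebra}. No obstacle arises beyond this book-keeping: once consistency is used to guarantee that the labels $x^{\div(a)}$ divide $\prod_{\rho}x_{\rho}$ (Proposition~\ref{prop:div(a)}), the structure of $\widetilde{Q}$ is forced to be the $M$-periodic replication of the finite decorated region $\bigcup_{i}\widetilde{Q}(i)$ sitting inside the union of the unit hypercubes $\mathsf{C}(u_i)$.
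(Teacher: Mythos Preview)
Your proof is correct and follows the same approach as the paper's own proof, which simply states ``This is little more than a restatement of Corollary~\ref{cor:allarrowsinW}.'' You have carefully spelled out the bookkeeping that the paper leaves implicit: the $M$-periodicity of $\widetilde{Q}$ for one inclusion, and for the other, the fact that Corollary~\ref{cor:allarrowsinW} places every arrow (hence every vertex) of $Q$ on an anticanonical cycle, which lifts to the relevant $\widetilde{Q}(j)$. Your final paragraph invoking Proposition~\ref{prop:div(a)} and the hypercubes $\mathsf{C}(u_i)$ is extraneous, since that proposition is already the input to Corollary~\ref{cor:allarrowsinW}, but it does no harm.
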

\begin{proof}
This is little more than a restatement of Corollary~\ref{cor:allarrowsinW}.
\end{proof}

\subsection{Moduli of quiver representations}

A walk $\gamma$ in $Q$ is an alternating sequence $i_l a_l \dotsb a_1 i_{1}$ of vertices $i_1, \dotsc, i_l$ and arrows
$a_1,\dotsc, a_l$ where $a_{k}$ is an arrow between $i_{k}$ and
$i_{k+1}$.   A walk $\gamma$ is closed if $i_1 = i_l$. If $\tail(a_k) = i_{k}$ and $\head(a_k) = i_{k+1}$ then $a_k$ is
a \emph{forward} arrow in $\gamma$; otherwise $\tail(a_k) = i_{k+1}$, $\head(a_k)
= i_{k}$ and $a_k$ is a \emph{backward} arrow.  If $p$ is a path in $Q$ then $p^{-1}$ denotes the walk from $\head(p)$ to $\tail(p)$ that traverses backwards each arrow from the support of $p$. For  a walk $\gamma$ in $Q$ and for $a\in Q_1$, let $\mult_\gamma(a)\in \ZZ$ be the number of times $a$ appears as a forward arrow in $\gamma$ minus the number of times it appears as a backwards arrow. Set $v(\gamma):= \sum_{a\in Q_1} \mult_\gamma(a) \chi_a\in \ZZ^{Q_1}$.  Consider the abelian group 
  \[
  \Lambda = \ZZ^{Q_1}/\big(v(p^+) - v(p^-)\in \ZZ^{Q_1} \mid\exists\; q\in \mathscr{P} \text{ such that }\partial_qW = p^++p^-\big)
  \]
and the quotient map $\wt\colon \ZZ^{Q_1}\to \Lambda$. Define the semigroup $\Lambda_+:= \wt(\NN^{Q_1})$.

 \begin{lemma} 
 \label{lem:Kergens}
 If $A$ is consistent then the maps $\pi\colon \ZZ^{Q_1}\to \ZZ(Q)$ and $\wt\colon \ZZ^{Q_1}\to\Lambda$ coincide. In particular, $A$ is graded by the semigroup $\Lambda_+= \NN(Q)$.
 \end{lemma}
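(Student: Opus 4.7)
The plan is to establish the equality of kernels $\Ker(\pi) = \Ker(\wt)$ inside $\ZZ^{Q_1}$, from which both statements follow. Write $K$ for the subgroup of $\ZZ^{Q_1}$ generated by the elements $v(p^+) - v(p^-)$ with $\partial_q W = p^+ + p^-$ for some $q \in \mathscr{P}$, so that $\Lambda = \ZZ^{Q_1}/K$ and $\wt$ is the quotient map.

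The easy inclusion $K \subseteq \Ker(\pi)$ is immediate from the discussion preceding Definition~\ref{def:Ftermequiv}: if $\partial_q W = p^+ + p^-$, then $p^+$ and $p^-$ are paths with common head $\tail(q)$, common tail $\head(q)$, and common labelling monomial $\prod_\rho x_\rho/x^{\div(q)}$. Hence $\inc(v(p^+)) = \inc(v(p^-))$ and $\div(v(p^+)) = \div(v(p^-))$, so $v(p^+) - v(p^-) \in \Ker(\pi)$.

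For the reverse inclusion $\Ker(\pi) \subseteq K$, I would first reduce to showing that $v(p^+) - v(p^-) \in K$ whenever $p^+, p^-$ are paths in $Q$ with the same head, tail, and divisor. This reduction rests on a standard fact about toric kernels, namely that $\Ker(\pi)$ is generated as an abelian group by such path differences: any $u \in \Ker(\pi)$ may be split as $u = u^+ - u^-$ with $u^\pm \in \NN^{Q_1}$ having disjoint support and satisfying $\pi_*(y^{u^+}) = \pi_*(y^{u^-})$, and a routine graph-theoretic argument decomposes the associated flows on $Q$ into a finite collection of paths sharing endpoints and divisor. Granted this reduction, consistency of $A$ gives $p^+ - p^- \in J_\mathscr{E} = J_W$, so $p^+$ and $p^-$ are F-term equivalent in the sense of Definition~\ref{def:Ftermequiv}. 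Fix a chain $p^+ = p_0, p_1, \dots, p_{k+1} = p^-$ with $p_j - p_{j+1} = q_1(r^+ - r^-)q_2$ for some superpotential generator. Since $v$ is additive with respect to concatenation, $v(q_1 r^+ q_2) - v(q_1 r^- q_2) = v(r^+) - v(r^-) \in K$, and summing over $j$ telescopes to $v(p^+) - v(p^-) \in K$.

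This yields $\Ker(\pi) = K$, so $\pi$ factors through $\wt$ to give a canonical isomorphism $\Lambda \xrightarrow{\sim} \ZZ(Q)$ identifying the two maps. Restricting to nonnegative combinations of arrows, $\Lambda_+ = \wt(\NN^{Q_1})$ corresponds under this isomorphism to $\pi(\NN^{Q_1}) = \NN(Q)$, proving the second assertion. The $\NN^{Q_1}$-grading of $\kk Q$ in which each arrow $a$ sits in degree $\chi_a$ descends to a $\Lambda$-grading on $A_W = \kk Q/J_W$ by construction of $K$, and under the identification above this is precisely the $\NN(Q)$-grading of $A$. The main obstacle I anticipate is the decomposition step in the reverse inclusion: extracting path differences from an arbitrary element of $\Ker(\pi)$ needs a careful flow-decomposition argument for multisets of arrows in $Q$, and ensuring that this can be done without loss of the $\div$-homogeneity is where the combinatorics becomes delicate.
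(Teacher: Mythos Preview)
Your proposal is correct in outline and follows the same overall strategy as the paper: establish $K\subseteq\Ker(\pi)$ trivially, then for the reverse inclusion reduce to a path difference, invoke consistency to get F-term equivalence, and telescope. The step you flag as the obstacle is indeed where the work lies, and the paper handles it differently from what you sketch.

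Rather than decomposing an arbitrary $u\in\Ker(\pi)$ into \emph{several} matched pairs of paths (which, as you suspect, is delicate: there is no obvious reason why a flow decomposition of $u^+$ and $u^-$ should yield pairs that agree individually on head, tail, \emph{and} divisor), the paper produces a \emph{single} pair of cycles. Concretely: write $u=v(\gamma)$ for a closed walk $\gamma$, split $\gamma$ into alternating forward paths $\alpha_{2i-1}$ and backward walks $\alpha_{2i}^{-1}$, and insert auxiliary paths $\beta_j$ to and from the fixed vertex $0$. This ``elongation'' turns each piece into a cycle based at $0$, and concatenating the forward pieces gives one cycle $\gamma'_+$ while concatenating the (reversed) backward pieces gives another cycle $\gamma'_-$, with $v(\gamma'_+)-v(\gamma'_-)=u$ and both cycles sharing head, tail, and divisor. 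Now consistency and the F-term telescoping finish exactly as you describe.

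So your reduction to multiple matched path pairs is more than is needed and harder to justify; routing everything through a basepoint to obtain one matched pair of cycles is the cleaner manoeuvre. Once you replace your decomposition step with this construction, your argument is complete and coincides with the paper's.
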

 \begin{proof}
 It suffices to prove that $\Ker(\pi) = L:=\big(v(p^+) - v(p^-)\in \ZZ^{Q_1} :  p^+-p^-\in J_W\big)$. For $p^+-p^-\in J_W$, the paths $p^{\pm}$ share the same head, tail and divisor, so $v(p^+) - v(p^-)\in \Ker(\pi)$. For the opposite inclusion, consider $v\in \Ker(\pi)$. Since $\pi = (\inc, \div)$ there is a closed walk $\gamma$ in $Q$ with $v(\gamma) \in \Ker(\div)$. We now use an `elongation' operation to replace $\gamma$ by a more convenient closed walk $\gamma'$ satisfying $v(\gamma')\in \Ker(\div)$. First, write $\gamma$ as a sequence $\alpha_1^{\,} \alpha_2^{-1} \alpha_3^{\,} \dotsb  \alpha_{2\ell -1}^{\,} \alpha_{2 \ell}^{-1}$ that alternates between paths $\alpha_{2i-1}$ ($1\leq i\leq \ell$) comprising forward arrows, and walks $\alpha_{2i}^{-1}$ ($1\leq i\leq \ell$) comprising backward arrows. For $1\leq i\leq \ell$, choose a path $\beta_{2i-1}$ from $\head(\alpha_{2i-1})$ to $0\in Q_0$ and a path $\beta_{2i}$ from $0\in Q_0$ to $\tail(\alpha_{2i})$.  For $\beta_{0} = \beta_{2 \ell}$, consider
    \[
    \beta_{0}^{\,} \alpha_1^{\,}\beta_1^{\,}
    \beta_1^{-1}\alpha_2^{-1} \beta_{2}^{-1}\beta_2^{\,} \alpha_3^{\,}
    \beta_3^{\,} \beta_3^{-1} \dotsb \alpha_{2 \ell}^{-1}
    \beta_{2\ell}^{-1} \, .
    \]  
   Each composition $p_{2i+1}:=\beta_{2i}\alpha_{2i+1}\beta_{2i+1}$ is a cycle from $0$, while each $p_{2i}^{-1}:= \beta_{2i-1}^{-1}\alpha_{2i}^{-1}\beta_{2i}^{-1}$ is a closed walk from 0 comprising backwards arrows. Set $\gamma' = p_1p_3\cdots p_{2\ell-1}p_{2\ell}^{-1}p_{2\ell-2}^{-1}\dots p_{2}^{-1}$. Since $v(\gamma')=v(\gamma)= v\in \Ker(\pi)$, the cycles $\gamma'_+:=p_1p_3\cdots p_{2\ell-1}$ and $\gamma'_-:=p_2p_4\dots p_{2\ell}$ share the same divisor and hence determine the same element in $A_{\mathscr{E}}$. Since $A_{\mathscr{E}}$ is consistent, the cycles $\gamma'_{\pm}$ determine the same element in $A_W$ so they are F-term equivalent. Thus,  there exists a finite sequence of cycles $\gamma'_+=\gamma_0, \gamma_1,\dotsb, \gamma_{k+1}=\gamma'_-$ such that for each $0\leq j\leq k$, we have $\gamma_j-\gamma_{j+1} = q_1(p^+-p^-)q_2$ for some relation $p^+-p^-\in J_W$ and paths $q_1, q_2$ in $Q$. Expand 
   \[
   v = v(\gamma') = v(\gamma'_+) - v(\gamma'_-) =  \sum_{0\leq j\leq k} \big(v(\gamma_j) - v(\gamma_{j+1})\big).
   \]
 The first statement follows as $v(\gamma_j) - v(\gamma_{j+1})\in L$ for $0\leq j\leq k$. The second statement follows immediately from the first by setting $\deg(p) = \pi(v(p))\in \NN(Q)$ for any path $p$ in $Q$.
  \end{proof}

 To uncover the geometry encoded in a consistent toric algebra we construct fine moduli spaces of quiver representations. For a path $p$ in $Q$ define $y_p:=\prod_{a\in  \supp(p)} y_a\in \kk[\NN^{Q_1}]$ and
 \[ I_W:= \big(y_{p^+}-y_{p^-}\in \kk[\NN^{Q_1}] \mid p^+-p^-\in J_W\big). \]
 This ideal is homogeneous in the $\Wt(Q)$-grading, so the subscheme $\mathbb{V}(I_W)\subseteq \mathbb{A}^{Q_1}_\kk$ cut out by $I_W$ is invariant under the $T$-action from \eqref{eqn:Taction}. For generic $\theta\in \Wt(Q)$,  the GIT quotient 
\[
\mathcal{M}_\theta:= \mathbb{V}(I_W)\git_\theta T = \Proj \Big{(}\bigoplus_{j \geq 0} \big(\kk[\NN^{Q_1}]/I_W)_{j \theta} \Big{)}\]
is the geometric quotient of the open subscheme of $\theta$-stable points of $\mathbb{V}(I_W)$ by the action of $T$. Following King~\cite{King}, $\mathcal{M}_\theta$ is the fine moduli space of isomorphism classes of $\theta$-stable representations of $Q$ with dimension vector $(1,1,\dots,1)\in \NN^{Q_0}$ that satisfy the relations $J_W$.  If a strictly $\theta$-semistable representation does exist, the resulting categorical quotient $\overline{\mathcal{M}_\theta}:= \mathbb{V}(I_W)\git_\theta T$ is merely the coarse moduli space parametrising S-equivalence classes of $\theta$-semistable representations with dimension vector $(1,1,\dots,1)$ that satisfy $J_W$.
    
 \begin{theorem}
 \label{thm:cohcomp}
 Let $A$ be consistent. For generic $\theta\in \Wt(Q)$ there is a commutative diagram
 \[
 \begin{CD}   
  Y_\theta @>>> \mathcal{M}_\theta \\
     @V{\tau_\theta}VV   @VVV    \\
X @>>> \overline{\mathcal{M}_0} \\
 \end{CD}
 \]
 where the horizontal maps are closed immersions and the vertical maps are projective morphisms arising from variation of GIT quotient. Moreover, the toric variety $Y_\theta$ is the unique irreducible component of $\mathcal{M}_\theta$ containing the $T$-orbit closures of points of $\mathbb{V}(I_W)\cap (\kk^\times)^{Q_1}$. 
 \end{theorem}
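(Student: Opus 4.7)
The approach is to exhibit both horizontal closed immersions as coming from a single $T$-equivariant closed immersion of affine subschemes of $\mathbb{A}_\kk^{Q_1}$, and then to identify $Y_\theta$ as the ``coherent component'' of $\mathcal{M}_\theta$ by comparing the two schemes on the dense torus.

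Since $A$ is consistent we have $J_W = J_\mathscr{E}$, so every F-term relation $p^+ - p^-$ satisfies $v(p^+) - v(p^-) \in \Ker(\pi)$, which gives $I_W \subseteq I_\mathscr{E}$ and a $T$-equivariant closed immersion $\iota \colon \mathbb{V}(I_\mathscr{E}) \hookrightarrow \mathbb{V}(I_W)$. Because $\iota$ is $T$-equivariant, the $\theta$-semistable locus of $\mathbb{V}(I_\mathscr{E})$ is the preimage under $\iota$ of the $\theta$-semistable locus of $\mathbb{V}(I_W)$, so applying $\Proj$ to the induced surjection of $\Wt(Q)$-graded rings yields a closed immersion $Y_\theta \hookrightarrow \mathcal{M}_\theta$; the special case $\theta = 0$ together with the identification $X \cong Y_0$ from the proof of Proposition~\ref{prop:Ytheta} produces the bottom horizontal morphism. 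The vertical arrows are VGIT morphisms from $\theta$ to $0$ on each affine scheme, and naturality of VGIT with respect to equivariant closed immersions makes the square commute.

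For the \emph{Moreover} part the key observation is the identity
\[
\mathbb{V}(I_W) \cap (\kk^\times)^{Q_1} \;=\; \mathbb{V}(I_\mathscr{E}) \cap (\kk^\times)^{Q_1}.
\]
The inclusion $\supseteq$ is immediate from $I_W \subseteq I_\mathscr{E}$. For $\subseteq$, take $w$ in the left-hand side and any generator $y^u - y^v$ of $I_\mathscr{E}$; Lemma~\ref{lem:Kergens} writes $u-v \in \Ker(\pi)$ as a $\ZZ$-linear combination of F-term differences $v(p_i^+)-v(p_i^-)$, and since every coordinate of $w$ is a unit, the identities $w^{v(p_i^+)} = w^{v(p_i^-)}$ multiply to give $w^{u-v} = 1$, so $y^u-y^v$ vanishes at $w$. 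Hence the $T$-orbit closures of points of $\mathbb{V}(I_W)\cap (\kk^\times)^{Q_1}$ lie in $\mathbb{V}(I_\mathscr{E})$ and project to the dense torus of $Y_\theta$ inside $\mathcal{M}_\theta$ (compare the proof of Proposition~\ref{prop:Ytheta}). Since $Y_\theta$ is an irreducible toric variety and is closed in $\mathcal{M}_\theta$, any irreducible component of $\mathcal{M}_\theta$ containing this dense subset must contain its closure, and therefore coincides with $Y_\theta$.

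The main subtlety is the last step: Lemma~\ref{lem:Kergens} only ensures that $\Ker(\pi)$ is generated as a \emph{group} by F-term differences, which is strictly weaker than generating $I_\mathscr{E}$ as an ideal from $I_W$. Consequently the horizontal maps need not be isomorphisms, and $\mathcal{M}_\theta$ can acquire extra irreducible components supported along the boundary $\mathbb{V}(I_W)\setminus (\kk^\times)^{Q_1}$. The role of consistency is precisely to collapse this distinction on the dense torus, which is what is needed to single out $Y_\theta$ as the unique coherent component of $\mathcal{M}_\theta$.
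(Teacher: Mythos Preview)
Your argument follows the same outline as the paper's: deduce $I_W\subseteq I_{\mathscr{E}}$ to obtain the closed immersions and the commutative square, then invoke Lemma~\ref{lem:Kergens} to identify the two schemes on the dense torus and hence single out $Y_\theta$ as the coherent component. The paper defers the ``Moreover'' step to \cite[Theorem~3.10, Theorem~4.3\two]{CMT1}; you have essentially unpacked those citations into a direct argument, which is a virtue.

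Two small points deserve tightening. First, the inclusion $I_W\subseteq I_{\mathscr{E}}$ does not require consistency: the generators $p^+-p^-$ of $J_W$ share head, tail and divisor by construction (they arise from partial derivatives of anticanonical cycles), so $v(p^+)-v(p^-)\in\Ker(\pi)$ automatically. Consistency is used only to invoke Lemma~\ref{lem:Kergens} in the second half. Second, your final clause ``and therefore coincides with $Y_\theta$'' is not yet justified: you have shown that any irreducible component $Z$ of $\mathcal{M}_\theta$ containing the torus orbits must \emph{contain} $Y_\theta$, but not that $Z\subseteq Y_\theta$. The missing observation is that the image of $\mathbb{V}(I_W)\cap(\kk^\times)^{Q_1}$ in $\mathcal{M}_\theta$ is \emph{open} (it is the image of an open $T$-invariant set under a geometric quotient), so $Z$ meets an open set lying inside $Y_\theta$, whence $Z$ equals the closure of that intersection and is contained in $Y_\theta$. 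With this, the equality $Z=Y_\theta$ follows.
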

 
 \begin{remark}
For generic $\theta\in \Wt(Q)$, we call $Y_\theta$ the \emph{coherent component} of $\mathcal{M}_\theta$. 
\end{remark}
\begin{proof}
To construct the diagram, note that for each $p^+-p^-\in J_W$, the paths $p^{\pm}$ share head, tail and divisor, so $I_W\subseteq I_{\mathscr{E}}$. Therefore $Y_\theta\subseteq \overline{\mathcal{M}_\theta}$ for all $\theta\in \Wt(Q)$ which gives the lower horizontal map, and the top map follows since $\overline{\mathcal{M}_\theta} = \mathcal{M}_\theta$ for $\theta$ generic. The vertical maps of the diagram are well known.  Lemma~\ref{lem:Kergens} shows that the vectors $\{v(p^+)-v(p^-)\in \ZZ^{Q_1}\}$ generate the lattice $\Ker(\pi)$. The proof of \cite[Theorem~3.10]{CMT1} applies verbatim to show that $\mathbb{V}(I_{\mathscr{E}})$ is the unique irreducible component of $\mathbb{V}(I_W)$ that does not lie in any coordinate hyperplane of $\mathbb{A}^{Q_1}_\kk$.  The proof of the final statement now follows precisely as in \cite[Theorem~4.3\two]{CMT1}.
\end{proof}

\begin{remark}
 The category of finite dimensional representations satisfying the relations $J_W$ is equivalent to the category $\modAPhi$ of finite dimensional left $A_W$-modules.  For $V=\bigoplus_{i \in Q_0} \kk e_i$, this equivalence takes representations of dimension vector $(1,1,\dots,1)\in \NN^{Q_0}$ to $A_W$-modules that are isomorphic as a $V$-module to $V$.  For a consistent algebra $A$ and for generic $\theta\in \Wt(Q)$, Lemma~\ref{lem:algebra} then implies that $\mathcal{M}_\theta$ is the fine moduli space of $\theta$-stable $\End_R\bigl( \bigoplus_{i \in Q_0} E_i \bigr)$-modules that are isomorphic as a $V$-module to $V$ (see King~\cite{King} for the notion of $\theta$-stability for modules).
 \end{remark}

\section{Cellular resolution for abelian skew group algebras}
\label{sec:McKay}
This section realises the skew group algebra arising from a finite abelian subgroup of $\GL(n,\kk)$ as a consistent toric algebra. The geometry encoded by this toric algebra specialises to geometry that arises in the study of the McKay correspondence. The main result introduces the toric cell complex for the skew group algebra and constructs the cellular resolution in this case.

\subsection{The McKay quiver of sections}
Let $G$ be a finite abelian group of $\GL(n,\Bbbk)$ containing no quasireflections, where $\Bbbk$ is a field of characteristic not dividing the order of $G$.  We may assume that $G$ is contained in the subgroup $(\Bbbk^{\times})^n$ of diagonal matrices with nonzero entries in $\GL(n,\kk)$.   Setting $\rho_i(g)$ to be the $i$th diagonal element of the matrix $g$ defines $n$ elements $\rho_1,\dots,\rho_n$ of the character group $G^{*}=\Hom(G,\Bbbk^{\times})$.  The \emph{McKay quiver} of $G\subset \GL(n,\Bbbk)$ is the quiver $Q$ with vertex set $G^{*}$, and an arrow $a^{\rho}_i$ from $\rho\rho_i$ to $\rho$ for each $\rho \in G^{*}$ and $1 \leq i \leq n$.

The dual action of $G$ on the coordinate ring $\kk[x_1,\dots,x_n]$ of $\mathbb{A}^n_\kk$ defines a $G^*$-grading with $\deg(x_i)=\rho_i$, and the $G$-invariant subalgebra $R=\kk[x_1,\dots,x_n]^G$ defines the normal affine toric variety $X=\Spec R = \mathbb{A}_{\Bbbk}^n/G$. Since $G$ contains no quasireflections, the map assigning to each $\rho \in G^{*}$ the reflexive $R$-module $E_\rho$ spanned over $\kk$ by semi-invariant polynomials of degree $\rho$ defines an isomorphism $\Cl(X)\cong G^*$.  The quiver of sections of the collection 
\begin{equation}
\label{eqn:McKaycollection}
\mathscr{E}=(E_{\rho} \mid \rho \in G^{*})
\end{equation}
on $X$ coincides with the McKay quiver of $G\subset \GL(n,\Bbbk)$, and the labelling monomial of each arrow $a_{i}^{\rho}$ is $x^{\div(a_{i}^{\rho})}=x_i$. It follows that the ideal of relations $J_{\mathscr{E}}$ is generated by elements of the form $a_i^{\rho \rho_j}a_j^{\rho}-a_j^{\rho \rho_i}a_i^{\rho}$ with $\rho \in G^{*}$ and $1 \leq i,j \leq n$. Apply \cite[Proposition~2.8]{CMT2} to obtain:

\begin{lemma}
\label{lem:skewgroup}
The toric algebra $A_{\mathscr{E}}$ is isomorphic to the skew group algebra $\Bbbk[x_1,\dots,x_n]\ast G$.
\end{lemma}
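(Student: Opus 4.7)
The plan is a two-step reduction. The first step is to apply Lemma~\ref{lem:algebra}, which identifies the toric algebra with an endomorphism algebra:
\[
A_\mathscr{E} \;\cong\; \End_R\Bigl(\bigoplus_{\rho \in G^{*}} E_\rho\Bigr).
\]
It therefore suffices to exhibit a $\kk$-algebra isomorphism between $\End_R(\bigoplus_{\rho} E_\rho)$ and $\kk[x_1,\dots,x_n]\ast G$.

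For the second step I would construct this isomorphism via the primitive orthogonal idempotents $e_\rho = |G|^{-1}\sum_{g\in G} \rho(g^{-1})g$ in $\kk G \subset \kk[x_1,\dots,x_n]\ast G$. These idempotents refine the isotypic decomposition $\kk[x_1,\dots,x_n]=\bigoplus_{\rho\in G^{*}} E_\rho$ of the coordinate ring, and yield a block decomposition
\[
\kk[x_1,\dots,x_n]\ast G \;=\; \bigoplus_{\rho,\rho'\in G^{*}} e_\rho\,\bigl(\kk[x_1,\dots,x_n]\ast G\bigr)\,e_{\rho'}
\]
in which the $(\rho,\rho')$-block is canonically identified with $\Hom_R(E_{\rho'},E_\rho)$. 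Summing over all pairs identifies the skew group algebra with $\End_R(\bigoplus_\rho E_\rho)$. This is exactly the content of \cite[Proposition~2.8]{CMT2}, which I would cite to carry out this step.

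The only remaining task is to verify that the resulting isomorphism is compatible with the quiver presentation. Under the identification above, the arrow $a_i^\rho\colon \rho\rho_i\to\rho$ corresponds to the element $x_ie_{\rho\rho_i}\in e_\rho\,(\kk[x_1,\dots,x_n]\ast G)\,e_{\rho\rho_i}$, since multiplication by $x_i$ sends the isotypic component of weight $\rho\rho_i$ into that of weight $\rho$. With this matching of generators, the defining commutator relations $a_i^{\rho\rho_j}a_j^\rho - a_j^{\rho\rho_i}a_i^\rho$ of $J_\mathscr{E}$ translate into $x_ix_j e_{\rho\rho_i\rho_j}-x_jx_i e_{\rho\rho_i\rho_j}$, which vanishes by commutativity of the polynomial variables. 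Conversely, these relations exhaust the kernel of the induced surjection $\kk Q\to \kk[x_1,\dots,x_n]\ast G$ because every element of $e_\rho(\kk[x_1,\dots,x_n]\ast G)e_{\rho'}$ is a polynomial in the $x_i$ with fixed total weight. The main obstacle is therefore not conceptual but notational: tracking the direction of arrows in $Q$ against the weight shift induced by the $x_i$, so that the generators of $J_\mathscr{E}$ produced here match the generators of the commutator ideal in $\kk[x_1,\dots,x_n]\ast G$ once the idempotents are factored out.
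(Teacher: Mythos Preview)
Your proposal is correct and follows essentially the same route as the paper: the paper's proof consists of the single sentence ``Apply \cite[Proposition~2.8]{CMT2}'' preceding the lemma statement, and your argument unpacks precisely this citation by factoring through Lemma~\ref{lem:algebra} and then invoking the idempotent decomposition of the skew group algebra. The additional verification you sketch in your final paragraph is not strictly needed once \cite[Proposition~2.8]{CMT2} is in hand, but it does no harm.
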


Assume now that $G \subset \SL(n,\Bbbk)$, so $X$ is Gorenstein. Every anticanonical cycle in $Q$ traverses precisely $n$ arrows, one with each labelling monomial $x_i$ for $1\leq i\leq n$, and without loss of generality we choose the final arrow of any such cycle to have labelling monomial $x_n$ and head at vertex $\rho\in G^*$. Thus, every anticanonical cycle can be written uniquely in the form
\[
a_{n}^{\rho}a_{\sigma(n-1)}^{\rho\rho_{\sigma(n-1)}}\cdots   a_{\sigma(2)}^{\rho\rho_{\sigma(2)}\cdots \rho_{\sigma(n-1)}}a_{\sigma(1)}^{\rho\rho_{\sigma(1)}\cdots \rho_{\sigma(n-1)}} 
\]
for some $\rho\in G^*$ and some permutation $\sigma$ on $n-1$ letters, so the superpotential for $\mathscr{E}$ is
\begin{equation}
\label{eqn:McKaysuperpotential}
W=\sum_{\rho \in G^*}\sum_{\sigma \in \mathfrak{S}_{n-1}} a_{n}^{\rho}a_{\sigma(n-1)}^{\rho\rho_{\sigma(n-1)}}\cdots   a_{\sigma(2)}^{\rho\rho_{\sigma(2)}\cdots \rho_{\sigma(n-1)}}a_{\sigma(1)}^{\rho\rho_{\sigma(1)}\cdots \rho_{\sigma(n-1)}}  \in \kk Q_{\mathrm{cyc}},
\end{equation}
where $\mathfrak{S}_{n-1}$ is the set of permutations on $n-1$ letters.  It is straightforward to verify that the ideal of superpotential relations $J_W$ coincides with $J_{\mathscr{E}}$, so we obtain the following result.

\begin{proposition}
The toric algebra $A_{\mathscr{E}}$ is consistent.
\end{proposition}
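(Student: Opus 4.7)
The plan is to show that the two-sided ideals $J_{\mathscr{E}}$ and $J_W$ in $\kk Q$ coincide. The inclusion $J_W \subseteq J_{\mathscr{E}}$ is automatic from the discussion preceding Definition~\ref{def:consistent}: for any generator $p^+ - p^-$ of $J_W$, the paths $p^\pm$ are the two summands of a partial derivative of $W$, and since every summand of $W$ in \eqref{eqn:McKaysuperpotential} is an anticanonical cycle of label $x_1 \cdots x_n$, the paths $p^\pm$ automatically share head, tail and divisor.

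For the reverse inclusion, I would argue that every generator $a_i^{\rho\rho_j} a_j^{\rho} - a_j^{\rho\rho_i} a_i^{\rho}$ of $J_{\mathscr{E}}$ (with $\rho \in G^*$ and $1 \leq i \neq j \leq n$) arises as $\partial_q W$ for a well-chosen $q \in \mathscr{P}$. Both summands are paths from $\rho\rho_i\rho_j$ to $\rho$ with labelling $x_i x_j$, so I would complete them to anticanonical cycles by choosing $q$ to be any length-$(n-2)$ path from $\rho$ to $\rho\rho_i\rho_j$ whose arrows carry the $n-2$ distinct labels $\{x_k : k \neq i, j\}$ in some fixed order. The crucial point is that such a $q$ exists precisely because of the hypothesis $G \subset \SL(n,\kk)$: one has $\rho_1 \cdots \rho_n = 1$ in $G^*$, so the character discrepancy $\rho_i \rho_j$ between the endpoints of $q$ matches $\prod_{k \neq i, j}\rho_k^{-1}$, and the arrows of any permutation of $\{1, \dots, n\}\setminus\{i,j\}$ compose consistently in the McKay quiver.

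The remaining verification that $q \in \mathscr{P}$ and that $\partial_q W$ gives precisely the desired relation hinges on the rigid combinatorics of the McKay quiver: each vertex has exactly one outgoing arrow of each label-index $1, \dots, n$. Consequently any path $p$ contributing to $\partial_q W$ must run from $\rho\rho_i\rho_j$ to $\rho$ with labelling $x_1 \cdots x_n / x^{\div(q)} = x_i x_j$, and the only two such length-two paths are obtained by traversing the $i$- and $j$-arrows in the two possible orders. These two paths share neither their initial arrow (one starts with the $i$-arrow out of $\rho\rho_i\rho_j$, the other with the $j$-arrow) nor their final arrow (one ends with the $j$-arrow into $\rho$, the other with the $i$-arrow), so $q$ satisfies the defining condition of $\mathscr{P}$ and the generator $a_i^{\rho\rho_j} a_j^{\rho} - a_j^{\rho\rho_i} a_i^{\rho}$ lies in $J_W$. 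I anticipate no real obstacle here; the argument is purely combinatorial once the path $q$ has been produced, and the $\SL(n,\kk)$ hypothesis is the only non-formal ingredient.
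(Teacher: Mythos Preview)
Your proposal is correct and is precisely the ``straightforward verification'' that the paper alludes to but does not spell out: the paper's own proof amounts to the single sentence preceding the proposition, so you have filled in exactly the details it omits. The only point worth tightening is the observation that $\partial_q W$ has \emph{no other} summands beyond the two length-two paths, which follows because every arrow in the McKay quiver carries a degree-one label $x_k$, forcing any path with label $x_ix_j$ to have length exactly two.
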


\begin{remark}
\label{rem:notsaturated}
\begin{enumerate}
\item The superpotential $\Phi$ for the McKay quiver of $G \subset \SL(n,\Bbbk)$ introduced by Bocklandt--Schedler--Wemyss~\cite[\S4]{BSW} counts every anticanonical cycle precisely $n$ times. Thus, ignoring the sign of each term, the superpotential $W$ from \eqref{eqn:McKaysuperpotential} equals $\frac{1}{n}\Phi$.
\item Craw--Maclagan--Thomas~\cite{CMT1} introduce the coherent component $Y_{\theta}$ of the fine moduli space $\mathcal{M}_{\theta}$ of $\theta$-stable McKay quiver representations. For $\mathcal{M}_{\theta} = \ghilb$, this recovers Nakamura's irreducible version  $Y_{\theta}=\hilbg$. Thus, for the subgroup $G\subset \GL(6,\kk)$ of order 625 from \cite[Example~5.7]{CMT2}, the coherent component $\hilbg$ and the variety $\mathbb{V}(I_Q)$ are not normal. In particular, the semigroup $\NN(Q)$ defining $\mathbb{V}(I_Q)$ need not be saturated.
\end{enumerate}
\end{remark}

\subsection{The toric cell complex}
\label{sec:CWcomplex}
A \emph{cell} in a topological space is a subspace that is homeomorphic to the closed $k$-dimensional ball $B^k=\{ x \in \RR^k \mid \|x\| \leq 1\}$ for some $k\in \NN$. We use the term $k$-cell when we wish to make explicit the dimension of the cell. A finite \emph{regular cell complex} $\Delta$ is a finite collection of cells in a Hausdorff topological space $\vert\Delta\vert:=\bigcup_{\eta \in \Delta}\eta$ such that we have each of the following: \one\ $\emptyset \in \Delta$; \two\ the interiors of the nonempty cells partition $\vert\Delta\vert$; and  \three\ the boundary of any cell in $\Delta$ is a union of cells in $\Delta$. Denote by $\Delta_k$ the set of $k$-cells in $\Delta$. The \emph{faces} of a cell $\eta\in \Delta$ are the cells $\eta^\prime$ satisfying $\eta^\prime\subset \eta$, and \emph{facets} of a cell are faces of codimension-one. The prototypical example of a finite regular cell complex is the set of faces of a convex polytope. Note that our cells are the closures of the open cells in the regular cell complexes described in Bruns--Herzog~\cite[\S6.2]{BrunsHerzog}.  

The most important property of a regular cell complex for this article is the existence of an incidence function $\varepsilon\colon \Delta\times\Delta\rightarrow \{0,\pm 1\}$.  To state the definition, recall from \cite[\S6.2]{BrunsHerzog} that regular cell complexes satisfy the following property:
\begin{equation}
\label{eqn:facetsproperty}
\left\{\begin{array}{c} \text{If $\eta\in \Delta_k$ and $\eta^{\prime\prime}\in \Delta_{k-2}$ is a face of $\eta$, there exist precisely two cells} \\
\text{$\eta_1^\prime, \eta_2^\prime\in \Delta_{k-1}$ such that $\eta_j^\prime$ is a face of $\eta$ and $\eta^{\prime\prime}$ is a face of $\eta_j^{\prime}$ for $j=1,2$.}\end{array}\right.
\end{equation}
An \emph{incidence function} on $\Delta$ is a function $\varepsilon \colon \Delta\times \Delta \longrightarrow  \{0,\pm1\}$ such that $\varepsilon(\eta,\eta^\prime)= 0$ unless $\eta^\prime$ is a facet of $\eta$, that $\varepsilon(\eta,\emptyset) = 1$ for all 0-cells $\eta$ and, moreover, that if $\eta\in \Delta_k$ and $\eta^{\prime\prime}\in \Delta_{k-2}$ is a face of $\eta$, then for the cells $\eta_1^\prime, \eta_2^\prime\in \Delta_{k-1}$ from \eqref{eqn:facetsproperty} we have 
\begin{equation}
\label{eqn:signcondition}
\varepsilon(\eta,\eta_1^\prime) \varepsilon(\eta_1^\prime,\eta^{\prime\prime}) +\varepsilon(\eta,\eta_2^\prime) \varepsilon(\eta_2^\prime,\eta^{\prime\prime})= 0.
\end{equation}
Every regular cell complex $\Delta$ admits an incidence function, and any two such differ by the choice of orientation of each cell, see Bruns--Herzog~\cite[Lemma 6.2.1, Theorem 6.2.2]{BrunsHerzog}.

\medskip

We now associate a regular cell complex $\Delta$ to the consistent collection $\mathscr{E}$ from \eqref{eqn:McKaycollection} on the Gorenstein quotient  $X=\mathbb{A}^n_\kk/G$ for a finite abelian subgroup $G\subset \SL(n,\kk)$ of order $r+1$. Let $\{\chi_i \mid 1\leq i\leq n\}$ denote the standard basis of $\ZZ^n$. The short exact sequence \eqref{eqn:Coxsequence} for $X$ is 
 \begin{equation}
 \label{eqn:McKaysequence}
 \begin{CD}   
    0@>>> M @>>> \ZZ^n @>{\deg}>> G^*@>>> 0,
\end{CD}
\end{equation}
where $\deg(\chi_i)=\rho_i$. The covering quiver $\widetilde{Q}\subset \RR^n$ of the McKay quiver $Q$ has vertex set $\widetilde{Q}_0=\ZZ^n$, and for each $u\in \ZZ^n$ there is an arrow from $u$ to $u+\chi_i$ for $1\leq i\leq n$.   Each arrow in $\widetilde{Q}$ is supported on an edge of a unit hypercube $\mathsf{C}(u)\subset \RR^n$. Remark~\ref{rem:QinQuotient} shows that the image of $\widetilde{Q}$ under the natural projection to the real $n$-torus  $\RR^n \rightarrow \mathbb{T}^n:=\RR^n/M$ defines an embedding of the McKay quiver $Q$ in $\mathbb{T}^n$, so each arrow of $Q$ with tail at vertex $\rho\in G^*$ is supported on an edge of the image of an $n$-cell $\mathsf{C}(u)$ in $\mathbb{T}^n$ for some $u\in \deg^{-1}(\rho)$. We let $\Delta(\rho)$ denote the set of all cells in $\mathbb{T}^n$ obtained as the image of a face of the hypercube $\mathsf{C}(u)$ for some $u\in \deg^{-1}(\rho)$. The union $\Delta:=\bigcup_{\rho\in G^*} \Delta(\rho)$ is a regular cell complex in $\mathbb{T}^n$ comprising all cells obtained as the projection to $\mathbb{T}^n$ of the faces of the $r+1$ hypercubes $\Delta(\rho)\subset \RR^n$ for $\rho\in G^*$.

\begin{definition}
\label{def:McKaycellcomplex}
The \emph{toric cell complex} for the McKay quiver $Q$ is the finite regular cell complex $\Delta$ in $\mathbb{T}^n$. We also refer to $\Delta$ as the toric cell complex of the subgroup $G \subset \SL(n,\Bbbk)$ or, equivalently, of the collection $\mathscr{E}$ from \eqref{eqn:McKaycollection}.
\end{definition}

 \begin{lemma}
 \label{lem:McKaybijections}
There are canonical bijections between $\Delta_0$ and $Q_0$, between $\Delta_1$ and $Q_1$, and between $\Delta_2$ and the set $\{a_i^{\rho \rho_j}a_j^{\rho}-a_j^{\rho \rho_i}a_i^{\rho} \mid \rho \in G^{*}, 1 \leq i<j \leq n\}$ of minimal generators of $J_\mathscr{E}$. 
\end{lemma}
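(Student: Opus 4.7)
The plan is to exploit the description of $\Delta$ as the quotient by $M$ of the standard cubical decomposition of $\RR^n$ whose top-dimensional cells are the unit hypercubes $\mathsf{C}(u)$ for $u \in \ZZ^n$. This cubical decomposition is $\ZZ^n$-invariant, and the sublattice $M \subset \ZZ^n$ acts freely on it with quotient $\ZZ^n/M = G^*$ by the sequence \eqref{eqn:McKaysequence}, so the $k$-cells of $\Delta$ are precisely the $M$-orbits of $k$-faces of the unit hypercubes in $\RR^n$. For $k=0$, these $M$-orbits are just the cosets of $M$ in $\ZZ^n$, immediately giving $\Delta_0 \leftrightarrow G^* = Q_0$. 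For $k=1$, each edge of a unit hypercube has the form $[u, u + \chi_i]$ for a unique pair $(u, i) \in \ZZ^n \times \{1,\dots,n\}$, and two such edges lie in the same $M$-orbit iff they share the same $i$ and their tails are $M$-equivalent; this yields $\Delta_1 \leftrightarrow G^* \times \{1,\dots,n\}$, which exactly matches the arrow set of the McKay quiver under $[u, u+\chi_i] \mapsto a_i^{\deg(u)}$.

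For $k=2$, each 2-face of a unit hypercube is a unit square with vertices $\{u, u+\chi_i, u+\chi_j, u+\chi_i+\chi_j\}$ for a unique $u \in \ZZ^n$ and a unique pair $1 \leq i < j \leq n$, and passing to $M$-orbits parameterises $\Delta_2$ by $G^* \times \binom{[n]}{2}$. The boundary of any such square in $\widetilde{Q}$ decomposes into two directed length-two paths from $u$ to $u + \chi_i + \chi_j$: one through the intermediate vertex $u + \chi_i$ and one through $u + \chi_j$. Under the bijection $\Delta_1 \leftrightarrow Q_1$ these project to the two cycles $a_i^{\rho\rho_j}a_j^\rho$ and $a_j^{\rho\rho_i}a_i^\rho$ for the $\rho \in G^*$ determined by $\deg(u)$, and their difference is precisely the commutativity relation listed in the statement. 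This yields a well-defined map $\Delta_2 \to \{a_i^{\rho\rho_j}a_j^\rho - a_j^{\rho\rho_i}a_i^\rho\}$.

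The main obstacle is to verify that the commutativity relations on the right form a \emph{minimal} generating set for $J_\mathscr{E}$, so that the map above is a bijection rather than merely a surjection onto a possibly larger or partially redundant set. That they generate $J_\mathscr{E}$ is immediate from the identification of $A_\mathscr{E}$ with the skew group algebra $\kk[x_1,\dots,x_n] \ast G$ in Lemma~\ref{lem:skewgroup}, together with the standard presentation of the skew group algebra via these commutations referenced in \cite[Proposition~2.8]{CMT2}. Minimality then follows because $J_\mathscr{E}$ is homogeneous with respect to both path length and start/end vertex, contains no relations of length one (distinct arrows carry distinct labels), and each listed relation is the unique one at its particular head $\rho$, tail $\rho\rho_i\rho_j$ and pair of directions $\{i,j\}$, so none is a $\kk Q$-linear combination of the others.
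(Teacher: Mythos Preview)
Your argument is correct and follows essentially the same route as the paper: the bijections for $\Delta_0$ and $\Delta_1$ are read off directly from the construction of $\Delta$ as the $M$-quotient of the standard cubical decomposition, and the bijection for $\Delta_2$ comes from matching each unit square in $\widetilde{Q}$ with the commutativity relation traced out by its boundary. The paper's proof is terser---it simply asserts that the closed walk determined by a relation lifts to the boundary of a $2$-face and that every $2$-cell arises uniquely in this way---whereas you unpack the parametrisation $\Delta_2 \leftrightarrow G^* \times \binom{[n]}{2}$ explicitly and also justify that the listed relations form a \emph{minimal} generating set, a point the paper takes for granted.
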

\begin{proof}
The bijections for $\Delta_0$ and $\Delta_1$ are described in the construction of $\Delta$ above.  As for the final bijection, the
closed walk in $Q$ obtained by first traversing the path $a_i^{\rho \rho_j}a_j^{\rho}$ with orientation and then traversing the path $a_j^{\rho \rho_i}a_i^{\rho}$ against orientation lifts to a closed walk in $\widetilde{Q}$ that traverses the boundary of a 2-dimensional face $F$ of the cube $\mathsf{C}(u)$ for each $u\in \deg^{-1}(\rho^\prime)$ with $\rho^\prime = \rho\rho_i\rho_j$. The arrows in the boundary of the resulting 2-cell $\eta\in \Delta$ are precisely the arrows in the relation $a_i^{\rho \rho_j}a_j^{\rho}-a_j^{\rho \rho_i}a_i^{\rho}$. Conversely, every 2-cell arises from a unique relation in this way.
\end{proof}

Every cell $\eta\in\Delta$ is the image in $\mathbb{T}^n$ of a face $F\subset\mathsf{C}(u)$ where $u\in \ZZ^n$. Write $u_{\head}$ and $u_{\tail}$ for the vertices of $F$ that intersect the family of affine hyperplanes $H_\lambda:= \{u\in \RR^n \mid \sum_i u_i=\lambda\}$ at the maximum and minimum value of $\lambda$ respectively. The vertices $u_{\head}$ and $u_{\tail}$ depend on the choice of $F$, but their images $\head(\eta)\in \Delta_0$ and $\tail(\eta)\in\Delta_0$ in $\mathbb{T}^n$ do not, and we call these the \emph{head} and \emph{tail} vertices of $\eta$. The \emph{divisor} of $\eta$ is the element $\div(\eta):=u_{\head}-u_{\tail}\in \NN^n$. The following duality property of the toric cell complex is evident from the construction.

\begin{proposition}
\label{prop:dualityMcKay}
The map $\tau\colon \Delta\to \Delta$ that assigns to each $\eta\in \Delta_k$ the unique cell $\eta^\prime\in \Delta_{n-k}$ with $\tail(\eta^\prime)=\head(\eta)$, $\head(\eta^\prime)=\tail(\eta)$ and $x^{\div(\eta^\prime)} = \prod_{\rho\in \sigma(1)}x_\rho/x^{\div(\eta)}$ is an involution.
\end{proposition}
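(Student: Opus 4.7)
The plan is to exploit the explicit parametrization of cells in $\Delta$ coming from the construction in Section~\ref{sec:CWcomplex}: each cell $\eta \in \Delta_k$ is the image in $\mathbb{T}^n$ of a unique $k$-dimensional face $F$ of some hypercube $\mathsf{C}(u)$, where $F$ is determined by a base lattice point $u \in \ZZ^n$, a subset $S \subseteq \{1,\dots,n\}$ of size $k$ (the free directions), and a function $\epsilon \colon \{1,\dots,n\}\setminus S \to \{0,1\}$ specifying the fixed coordinates. Explicitly,
\[
F = \Big\{u + \sum_{i \in S}\lambda_i\chi_i + \sum_{i \notin S}\epsilon(i)\chi_i \;\Big|\; 0 \leq \lambda_i \leq 1\Big\}.
\]
In these coordinates one computes directly that $u_{\tail} = u + \sum_{i \notin S}\epsilon(i)\chi_i$, that $u_{\head} = u_{\tail} + \sum_{i \in S}\chi_i$, and hence that $\div(\eta) = \sum_{i \in S}\chi_i$.

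The first step is a \emph{uniqueness} observation: within a single hypercube the data $(u_{\tail},\div(\eta))$ determines $F$, since $\div(\eta)$ recovers $S$ and $u_{\tail}$ recovers $u + \sum_{i \notin S}\epsilon(i)\chi_i$. Passing to $\mathbb{T}^n$ and noting that any two lifts of a cell of $\Delta$ differ by an $M$-translation, it follows that a cell in $\Delta$ is determined by the triple $(\head(\eta),\tail(\eta),\div(\eta))$. This reduces the proposition to: (i) exhibit at least one cell $\eta^\prime \in \Delta_{n-k}$ satisfying the three conditions of the statement; and (ii) check that applying the construction twice recovers $\eta$.

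For existence, I would set $u^\ast := u + \sum_{i \notin S}\epsilon(i)\chi_i \in \ZZ^n$ and let $F^\prime$ be the $(n-k)$-face of $\mathsf{C}(u^\ast)$ with free directions $S^c$ and fixed values $\epsilon^\prime(i) = 1$ for all $i \in S$. Define $\eta^\prime$ to be the image of $F^\prime$ in $\mathbb{T}^n$; by construction $\eta^\prime \in \Delta_{n-k}$. A direct calculation shows $u_{\tail}(F^\prime) = u^\ast + \sum_{i \in S}\chi_i = u_{\head}(F)$, so $\tail(\eta^\prime) = \head(\eta)$. Similarly $u_{\head}(F^\prime) = u^\ast + \sum_{i=1}^n \chi_i$, which equals $u_{\tail}(F) + (1,\dots,1)$. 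Here the \emph{Gorenstein hypothesis} $G \subseteq \SL(n,\kk)$ enters crucially: from the exact sequence~\eqref{eqn:McKaysequence} we have $\deg((1,\dots,1)) = \rho_1\cdots\rho_n = 1$, so $(1,\dots,1) \in M$, and therefore $\head(\eta^\prime) = \tail(\eta)$ in $\mathbb{T}^n$. Finally $\div(\eta^\prime) = \sum_{i \in S^c}\chi_i = (1,\dots,1) - \div(\eta)$, giving the monomial identity $x^{\div(\eta^\prime)} = \prod_{\rho \in \sigma(1)} x_\rho / x^{\div(\eta)}$.

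For the involution property, applying the construction to $\eta^\prime$ yields a cell $\tau(\eta^\prime) \in \Delta_k$ with head $\tail(\eta)$, tail $\head(\eta)$ — wait, reversed, so: with $\tail(\tau(\eta^\prime)) = \head(\eta^\prime) = \tail(\eta)$ and $\head(\tau(\eta^\prime)) = \tail(\eta^\prime) = \head(\eta)$, and with divisor $\prod_\rho x_\rho / x^{\div(\eta^\prime)} = x^{\div(\eta)}$. By the uniqueness observation above, $\tau(\eta^\prime) = \eta$. The main subtlety is not any single calculation but the bookkeeping of how one $M$-translates between hypercubes $\mathsf{C}(u)$ and $\mathsf{C}(u^\ast)$; the Gorenstein condition $(1,\dots,1) \in M$ is precisely what makes this bookkeeping close up and is the only nontrivial ingredient in the argument.
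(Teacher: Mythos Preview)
Your argument is correct and is precisely the unpacking that the paper omits: the paper offers no proof beyond the remark that the duality ``is evident from the construction,'' so your explicit parametrization of faces by $(u_{\tail},S)$, the complementary-face construction $S\mapsto S^c$, and the observation that $(1,\dots,1)\in M$ closes everything up are exactly what one must check. The only cosmetic point is that your uniqueness claim needs just the pair $(\tail(\eta),\div(\eta))$ rather than the full triple, since $\div(\eta)$ already records $S$ and hence $\head(\eta)$; and the stream-of-consciousness ``wait, reversed'' in the involution step should of course be cleaned up in a final version.
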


We now introduce the notion of right- and left-differentiation of cells with respect to faces. Let $\eta\in \Delta$. For any face $\eta^\prime\subset \eta$ there is a path in $\widetilde{Q}$ from $\head(\eta^\prime)$ to $\head(\eta)$. While this path need not be unique, its image in $Q$ is a well-defined F-equivalence class of paths that we denote $\overleftarrow{\partial}_{\!\eta'}\eta\in A$. Similarly, there is a path in $\widetilde{Q}$ from $\tail(\eta)$ to $\tail(\eta^\prime)$ that defines an F-equivalence class of paths in $Q$, denoted $\overrightarrow{\partial}_{\!\eta'}\eta\in A$.

\begin{definition}
\label{def:leftrightderivativesmckay}
For $\eta\in \Delta$ and any face $\eta^\prime\subset \eta$, the element $\overleftarrow{\partial}_{\!\eta'}\eta\in A$ is the \emph{left-derivative} of $\eta$ with respect to $\eta'$. Similarly, $\overrightarrow{\partial}_{\!\eta'}\eta\in A$ is the \emph{right-derivative} of $\eta$ with respect to $\eta'$.
\end{definition}

\begin{example}
For group action of type $\frac{1}{6}(1,2,3)$, Figure~\ref{fig:McKayCW} illustrates a fundamental region for $\Delta$ in $\RR^3$ (some $k$-cells are repeated for $k<3$). Observe that $\vert\Delta_0\vert = \vert \Delta_3\vert = 6$ and $\vert \Delta_1\vert = \vert \Delta_2\vert = 18$.\begin{figure}[!ht]
    \centering
       \psset{unit=1.3cm}
   \begin{pspicture}(0,-0.3)(9.9,2)
 \psset{linecolor=black}
\cnodeput(0,0){A}{\tiny{0}} \cnodeput(1.5,0){B}{\tiny{1}}\cnodeput(3,0){C}{\tiny{2}} \cnodeput(4.5,0){D}{\tiny{3}}\cnodeput(6,0){E}{\tiny{4}}\cnodeput(7.5,0){F}{\tiny{5}} \cnodeput(9,0){G}{\tiny{0}} 
\cnodeput(0,1.5){A1}{\tiny{3}} \cnodeput(1.5,1.5){B1}{\tiny{4}}\cnodeput(3,1.5){C1}{\tiny{5}} \cnodeput(4.5,1.5){D1}{\tiny{0}}\cnodeput(6,1.5){E1}{\tiny{1}}\cnodeput(7.5,1.5){F1}{\tiny{2}} \cnodeput(9,1.5){G1}{\tiny{3}} 
\rput(1,0.4){\cnodeput(0,0){A2}{\tiny{2}} \cnodeput(1.5,0){B2}{\tiny{3}}\cnodeput(3,0){C2}{\tiny{4}} \cnodeput(4.5,0){D2}{\tiny{5}}\cnodeput(6,0){E2}{\tiny{0}}\cnodeput(7.5,0){F2}{\tiny{1}} \cnodeput(9,0){G2}{\tiny{2}} 
\cnodeput(0,1.5){A3}{\tiny{5}} \cnodeput(1.5,1.5){B3}{\tiny{0}}\cnodeput(3,1.5){C3}{\tiny{1}} \cnodeput(4.5,1.5){D3}{\tiny{2}}\cnodeput(6,1.5){E3}{\tiny{3}}\cnodeput(7.5,1.5){F3}{\tiny{4}} \cnodeput(9,1.5){G3}{\tiny{5}}} 
\ncline{-}{A}{B}\ncline{-}{B}{C}\ncline{-}{C}{D}\ncline{-}{D}{E}\ncline{-}{E}{F}\ncline{-}{F}{G}  
\ncline{-}{A2}{B2}\ncline{-}{B2}{C2}\ncline{-}{C2}{D2}\ncline{-}{D2}{E2}\ncline{-}{E2}{F2}\ncline{-}{F2}{G2}  
\ncline{-}{A1}{B1}\ncline{-}{B1}{C1}\ncline{-}{C1}{D1}\ncline{-}{D1}{E1}\ncline{-}{E1}{F1}\ncline{-}{F1}{G1}  
\ncline{-}{A3}{B3}\ncline{-}{B3}{C3}\ncline{-}{C3}{D3}\ncline{-}{D3}{E3}\ncline{-}{E3}{F3}\ncline{-}{F3}{G3}  
\ncline{-}{A}{A1}\ncline{-}{B}{B1}\ncline{-}{C}{C1}\ncline{-}{D}{D1}\ncline{-}{E}{E1}\ncline{-}{F}{F1}\ncline{-}{G}{G1}  
\ncline{-}{A}{A2}\ncline{-}{B}{B2}\ncline{-}{C}{C2}\ncline{-}{D}{D2}\ncline{-}{E}{E2}\ncline{-}{F}{F2}\ncline{-}{G}{G2}  
\ncline{-}{A1}{A3}\ncline{-}{B1}{B3}\ncline{-}{C1}{C3}\ncline{-}{D1}{D3}\ncline{-}{E1}{E3}\ncline{-}{F1}{F3}\ncline{-}{G1}{G3}  
\ncline{-}{A2}{A3}\ncline{-}{B2}{B3}\ncline{-}{C2}{C3}\ncline{-}{D2}{D3}\ncline{-}{E2}{E3}\ncline{-}{F2}{F3}\ncline{-}{G2}{G3}  
  \end{pspicture}      
    \caption{The McKay cell complex $\Delta$ in $\mathbb{T}^3$ for the action of type $\frac{1}{6}(1,2,3)$}
  \label{fig:McKayCW} 
  \end{figure}
     Let $\eta\in \Delta_3$ denote the 3-cell on the far left of Figure~\ref{fig:McKayCW} and $\eta^\prime\in \Delta_2$ the facet (drawn horizontally) that contains the 0-cells 0, 1, 2, 3. Then $\head(\eta^\prime)=3$ and $\tail(\eta^\prime)=\tail(\eta)=\head(\eta)=0$. The right derivative is $\overrightarrow{\partial}_{\!\eta'}\eta = e_0\in A$ and the left derivative is the vertical arrow $\overleftarrow{\partial}_{\!\eta'}\eta = a_3^0\in A$.
\end{example}

\subsection{The cellular resolution}
For $0 \leq k \leq n$, consider the $\kk$-vector space $U_k=\bigoplus_{\eta \in \Delta_k}\Bbbk \cdot [\eta]$, where $[\eta]$ is a  formal symbol. Lemma~\ref{lem:Kergens} shows that the skew group algebra $A$ admits a natural $\Lambda_{+}$-grading, and since each cell $\eta \in  \Delta_{k}$ is homogeneous we obtain a $\Lambda_{+}$-grading on $U_{k}$. Identify the semisimple algebra $U_0$ with the subalgebra of $\Bbbk Q$ generated by the trivial paths. Note that $U_k$ is a $(U_0,U_0)$-bimodule, and consider the induced $(A,A)$-bimodule
\[
P_k=A \otimes_{U_0} U_k \otimes_{U_0} A=\bigoplus_{\eta \in \Delta_k}Ae_{\head(\eta)} \otimes [\eta] \otimes e_{\tail(\eta)}A.
\]
Note that $P_k$ inherits a $\Lambda_{+}$-grading, called the \emph{total $\Lambda_+$-grading},  in which the degree of a product of homogeneous elements is given by the sum of the degrees in each of the three positions. For $0 \leq k \leq n$, define a morphism $d_k \colon P_k \rightarrow P_{k-1}$ of $\Lambda_+$-graded graded $(A,A)$-bimodules by setting
$$
d_k(1 \otimes [\eta] \otimes 1)=\sum_{\codim(\eta^\prime, \eta) = 1}\varepsilon(\eta,\eta')\; \overleftarrow{\partial}_{\!\eta'}\eta\otimes[\eta'] \otimes \overrightarrow{\partial}_{\!\eta'}\eta,
$$
where $\varepsilon$ is an incidence function on $\Delta$. It is convenient to choose $\varepsilon$ to be compatible with the orientation of arrows in $Q$ as follows.  Identify each $\eta\in \Delta_1$ with an arrow $a\in Q_1$ according to Lemma~\ref{lem:McKaybijections}, so the 1-cell contains precisely two 0-cells $\head(a), \tail(a)\in \Delta_0$. Choosing $\varepsilon(a, \head(a)) = 1$ forces $\varepsilon(a, \tail(a)) = -1$ by \eqref{eqn:signcondition} and hence
$$
d_1(1 \otimes [a] \otimes 1)=1 \otimes [\head(a)] \otimes a  - a \otimes [\tail(a)] \otimes 1.
$$
Let $\mu \colon P_0=A\otimes_{U_0}A \rightarrow A$ denote the multiplication map.

\begin{proposition}
\label{prop:McKaycomplex}
For any choice of incidence function $\varepsilon$, the sequence
\begin{equation}\label{eqn:Koszul}
0 \longrightarrow P_n \xlongrightarrow{d_n} \cdots \xlongrightarrow{d_2} P_1 \xlongrightarrow{d_1} P_0 \xlongrightarrow{\mu} A \longrightarrow 0,
\end{equation}
is a complex of $\Lambda_+$-graded $(A,A)$-bimodules. Moreover, an alternative incidence function determines a new complex that is naturally isomorphic to that from \eqref{eqn:Koszul}. 
\end{proposition}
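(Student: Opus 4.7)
The plan is to verify in turn each of the bimodule, grading and chain-complex axioms, and then compare the complexes attached to two different incidence functions.

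First, each $d_{k}$ is a well-defined morphism of $(A,A)$-bimodules preserving the total $\Lambda_+$-grading. The expressions $\overleftarrow{\partial}_{\!\eta'}\eta$ and $\overrightarrow{\partial}_{\!\eta'}\eta$ are single elements of $A$: any two paths in $\widetilde{Q}$ from $\head(\eta')$ to $\head(\eta)$ (respectively from $\tail(\eta)$ to $\tail(\eta')$) lie inside a common translate of the hypercube $\mathsf{C}(u)$, so they share the same head, tail and divisor, and consistency ($J_{\mathscr{E}}=J_W$) forces them to coincide in $A$. Grading preservation is then the obvious identity $\div(\eta)=\div(\overleftarrow{\partial}_{\!\eta'}\eta)+\div(\eta')+\div(\overrightarrow{\partial}_{\!\eta'}\eta)$ in $\NN^n$, after passing to $\Lambda_+=\NN(Q)$ via Lemma~\ref{lem:Kergens}.

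Next I would verify $\mu\circ d_{1}=0$ directly: the explicit formula gives $\mu(1\otimes[\head(a)]\otimes a - a\otimes[\tail(a)]\otimes 1)=a-a=0$ for every $a\in\Delta_{1}$. The heart of the argument is $d_{k-1}\circ d_{k}=0$. Expanding,
\[
d_{k-1}d_k(1\otimes[\eta]\otimes 1)=\sum_{\eta',\eta''}\varepsilon(\eta,\eta')\varepsilon(\eta',\eta'')\,\overleftarrow{\partial}_{\!\eta'}\eta\cdot\overleftarrow{\partial}_{\!\eta''}\eta'\,\otimes[\eta'']\otimes\,\overrightarrow{\partial}_{\!\eta''}\eta'\cdot\overrightarrow{\partial}_{\!\eta'}\eta,
\]
where $\eta'$ ranges over facets of $\eta$ and $\eta''$ over facets of $\eta'$. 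Group the terms by $\eta''\in\Delta_{k-2}$ with $\eta''\subset\eta$; by property~\eqref{eqn:facetsproperty} exactly two facets $\eta_{1}',\eta_{2}'$ of $\eta$ contain $\eta''$. For $i=1,2$ the product $\overleftarrow{\partial}_{\!\eta_{i}'}\eta\cdot\overleftarrow{\partial}_{\!\eta''}\eta_{i}'$ is a path in $Q$ from $\head(\eta'')$ to $\head(\eta)$ whose divisor depends only on $\eta$ and $\eta''$, so consistency again forces both to represent the same element of $A$; the same holds for $\overrightarrow{\partial}_{\!\eta''}\eta_{i}'\cdot\overrightarrow{\partial}_{\!\eta_{i}'}\eta$. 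Thus the pair of summands attached to $\eta''$ share identical tensor factors, and the combined scalar $\varepsilon(\eta,\eta_{1}')\varepsilon(\eta_{1}',\eta'')+\varepsilon(\eta,\eta_{2}')\varepsilon(\eta_{2}',\eta'')$ vanishes by \eqref{eqn:signcondition}. The step I expect to be delicate is precisely this identification of path compositions in $A$; it is exactly where consistency of $A$ intervenes, and without it the sum would fail to cancel termwise.

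Finally, for the comparison of complexes attached to different incidence functions, I would invoke the fact (Bruns--Herzog~\cite[Lemma 6.2.1, Theorem 6.2.2]{BrunsHerzog}) that any two incidence functions $\varepsilon,\varepsilon'$ on $\Delta$ are related by a sign function $c\colon\Delta\to\{\pm 1\}$ with $c_{\emptyset}=1$ such that $\varepsilon'(\eta,\eta')=c_{\eta}c_{\eta'}\varepsilon(\eta,\eta')$ whenever $\eta'$ is a facet of $\eta$. Define $\phi_{k}\colon P_{k}\to P_{k}$ on the summand indexed by $\eta$ as multiplication of the generator $1\otimes[\eta]\otimes 1$ by $c_{\eta}$; this is manifestly an isomorphism of $\Lambda_+$-graded $(A,A)$-bimodules, and a one-line check using $c_{\eta}^{2}=1$ shows that $\phi_{k-1}\circ d_{k}=d_{k}'\circ\phi_{k}$, providing the required natural isomorphism between the two complexes.
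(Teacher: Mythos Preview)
Your proposal is correct and follows essentially the same route as the paper: expand $d_{k-1}d_k$, regroup by codimension-two faces $\eta''\subset\eta$, invoke property~\eqref{eqn:facetsproperty} to reduce to pairs $\eta_1',\eta_2'$, identify the two tensor factors in $A$, and cancel via~\eqref{eqn:signcondition}; the comparison of incidence functions likewise proceeds through the sign function of Bruns--Herzog. The paper is slightly more terse in that it writes the composite derivatives directly as $\overleftarrow{\partial}_{\!\eta''}\eta$ and $\overrightarrow{\partial}_{\!\eta''}\eta$ without dwelling on the identification step, whereas you spell out that this uses equality in $A$ of paths with the same head, tail and divisor; both amount to the same observation.
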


\begin{proof}
Note that, by our preceding remark, the cokernel of $d_1$ at $P_0$ is just the multiplication map $\mu$. Let us assume $k \geq 2$ and take $\eta \in \Delta_k$. Then 
\begin{align*}
d_{k-1}(d_k(1 \otimes [\eta] \otimes 1)) &= \sum_{\codim(\eta^\prime, \eta) = 1}\varepsilon(\eta,\eta^\prime)  \sum_{\codim(\eta^{\prime\prime}, \eta^\prime) = 1}\varepsilon(\eta',\eta'')  \overleftarrow{\partial}_{\!\eta'}\eta \overleftarrow{\partial}_{\!\eta''}\eta' \otimes[\eta''] \otimes \overrightarrow{\partial}_{\!\eta''}\eta' \overrightarrow{\partial}_{\!\eta'}\eta \\
&= \sum_{\codim(\eta^\prime, \eta) = 1}\sum_{\codim(\eta^{\prime\prime}, \eta^\prime) = 1}\varepsilon(\eta,\eta')\varepsilon(\eta',\eta'') \; \overleftarrow{\partial}_{\!\eta''}\eta \otimes[\eta''] \otimes \overrightarrow{\partial}_{\!\eta''}\eta.
\end{align*}
If $\eta'' \in \Delta_{k-2}$ is a face of $\eta$ then the only contributions in the double sum come from terms involving the facets $\eta'_1, \eta'_2\subset \eta$ from \eqref{eqn:facetsproperty} containing $\eta''$. The above sum is therefore 
$$
d_{k-1}(d_k(1 \otimes [\eta] \otimes 1)) = \sum_{\codim(\eta^{\prime\prime},\eta) = 2}\Big(\varepsilon(\eta,\eta'_1)\varepsilon(\eta'_1,\eta'')+\varepsilon(\eta,\eta'_2)\varepsilon(\eta'_2,\eta'')\Big)   \overleftarrow{\partial}_{\!\eta''}\eta \otimes[\eta''] \otimes \overrightarrow{\partial}_{\!\eta''}\eta,
$$
taken as a sum over codimension-two faces of $\eta$. This sum is zero by equation \eqref{eqn:signcondition}. For the second statement, let $\varepsilon'$ be another incidence function and consider the complex
$$
0 \longrightarrow P_n \xlongrightarrow{d'_n} \cdots \xlongrightarrow{d'_2} P_1 \xlongrightarrow{d'_1} P_0 \xlongrightarrow{\mu} A \longrightarrow 0
$$
determined by $\varepsilon'$. As \cite[Theorem 6.2.2]{BrunsHerzog} records, there exists a global sign function $\delta\colon \Delta \rightarrow \{\pm 1 \}$ such that $\varepsilon'(\eta,\eta')=\delta(\eta')\varepsilon(\eta,\eta')\delta(\eta)$ for all $\eta \in \Delta_k$, $\eta' \in \Delta_{k-1}$, $0 \leq k \leq n$. This implies that the bimodule homomorphisms $\phi_k\colon P_k \rightarrow P_k$ given by $\phi_k(1 \otimes [\eta]\otimes 1)=\delta(\eta)\otimes [\eta]\otimes 1$ define a chain map of complexes $\phi\lbdot\!\! \colon (P\lbdot\!\!,d\lbdot\!\!)  \rightarrow (P\lbdot\!\!,d_{{^{^{_{_{_{_{\bullet}}}}}}}}'\!\!)$. Since each $\phi_k$ is an isomorphism of $(A,A)$-bimodules, the chain map $\phi\lbdot\!\!$ is an isomorphism of complexes. This completes the proof.
\end{proof}

To demonstrate that the complex \eqref{eqn:Koszul} is a minimal projective $(A,A)$-bimodule resolution of $A$ we choose a suitable incidence function $\varepsilon$ on $\Delta$. For $\eta \in \Delta_k$, let $\eta' \subset \eta$ be a facet. We may write $x^{\div(\eta)}=x_{i_1}\cdots x_{i_k}$ and $x^{\div(\eta')}=x_{i_1}\cdots \widehat{x_{i_{\nu}}}\cdots x_{i_k}$ for $i_1<\cdots <i_k$, where $ \widehat{x_{i_{\nu}}}$ means that the factor $x_{i_{\nu}}$ is removed. We determine an incidence function on $\Delta$ by setting
\[
\varepsilon(\eta,\eta')=\left\{
\begin{array}{ll}
(-1)^{\nu} & \text{if $\head(\eta)=\head(\eta')$ and $x^{\div( \overleftarrow{\partial}_{\!\eta'}\eta)}=x_{i_{\nu}}$;} \\
(-1)^{\nu +1} & \text{if $\tail(\eta)=\tail(\eta')$ and $x^{\div(  \overrightarrow{\partial}_{\!\eta'}\eta)}=x_{i_{\nu}}$.}
\end{array} \right.
\]
For a fixed $\eta \in \Delta_k$ with head at $\rho:=\head(\eta)$, let $\eta'_1,\dots,\eta'_k$ denote the facets of $\eta$ with head at $\rho$; similarly, let $\overline{\eta}'_1,\dots,\overline{\eta}'_k$ denote the facets of $\eta$ with tail at $\tail(\eta)$. For the above choice of $\varepsilon$, the differential $d_k\colon P_k \rightarrow P_{k-1}$ is given by
\begin{equation}
\label{eqn:revisedcomplex}
d_k(1\otimes [\eta]\otimes 1)=\sum_{\nu=1}^k(-1)^{\nu} a_{i_{\nu}}^{\rho} \otimes [\eta'_{\nu}]\otimes 1 + \sum_{\nu=1}^k(-1)^{\nu+1} \otimes [\overline{\eta}'_{\nu}]\otimes a_{i_{\nu}}^{\rho \rho_{i_{1}}\cdots \widehat{\rho_{i_{\nu}}}\cdots \rho_{i_{k}}} .
\end{equation}

\begin{theorem}
\label{thm:McKayresolution}
The complex \eqref{eqn:Koszul} is a minimal projective $(A,A)$-bimodule resolution of $A$; this is the \emph{cellular resolution} of $A$.
\end{theorem}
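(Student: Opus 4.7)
Three properties must be verified: projectivity, minimality, and acyclicity. The first two are routine. Each summand $Ae_{\head(\eta)} \otimes e_{\tail(\eta)} A$ of $P_k$ is a direct summand of $A \otimes_\kk A$, so $P_k$ is a projective $(A,A)$-bimodule. Minimality follows from \eqref{eqn:revisedcomplex}: every term of $d_k(1 \otimes [\eta] \otimes 1)$ carries a factor which is an arrow of $Q$, hence lies in $\rad(A)$. It follows that the induced differentials on $(A/\rad A) \otimes_A P_\bullet \otimes_A (A/\rad A)$ all vanish.

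The substantive task is acyclicity. My plan is to identify \eqref{eqn:Koszul} with the Koszul $(A,A)$-bimodule resolution of the skew group algebra $A = S * G$, where $S := \kk[x_1,\dots,x_n]$. By construction of $\Delta$, the set $\Delta_k$ is in bijection with pairs $(\rho, I)$, where $\rho \in G^*$ and $I \subseteq \{1,\dots,n\}$ has $|I| = k$: the corresponding cell $\eta(\rho, I)$ has $\head(\eta) = \rho$, divisor $\prod_{j \in I} x_j$, and tail $\rho \cdot \bigl(\prod_{j \in I} \rho_j\bigr)^{-1}$. Writing $V = \kk x_1 \oplus \cdots \oplus \kk x_n$ as a $G$-representation, this bijection yields a natural isomorphism
\[
P_k \;\cong\; A \otimes_{\kk G} \wedge^k V \otimes_{\kk G} A,
\]
under which \eqref{eqn:revisedcomplex} matches, up to signs, the standard Koszul differential on the right-hand side.

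Acyclicity of this Koszul-type bimodule complex for $A = S * G$ then reduces to that of the classical Koszul resolution $K_\bullet(S) = S \otimes_\kk \wedge^\bullet V \otimes_\kk S$ of $S$ as an $(S,S)$-bimodule. The complex $K_\bullet(S)$ is $G$-equivariant and, since $|G|$ is invertible in $\kk$, the group algebra $\kk G$ is semisimple; the induction functor from $G$-equivariant $(S,S)$-bimodules to $(A,A)$-bimodules is therefore exact and carries $K_\bullet(S) \to S$ to the Koszul complex above. The one subtle point I anticipate is matching signs between \eqref{eqn:revisedcomplex} and the Koszul differential, but this is handled by Proposition~\ref{prop:McKaycomplex}: since any two incidence functions on $\Delta$ yield isomorphic complexes, it suffices to exhibit \emph{some} incidence function whose associated complex is the Koszul resolution.
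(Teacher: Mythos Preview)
Your proposal is correct and takes essentially the same approach as the paper: both arguments identify the complex \eqref{eqn:Koszul} with the bimodule Koszul resolution of the skew group algebra, and both invoke Proposition~\ref{prop:McKaycomplex} to reduce the sign-matching to a single convenient incidence function. The paper simply cites Taylor~\cite{Taylor} (and mentions \cite{BSW}) for the Koszul complex, whereas you spell out projectivity and minimality explicitly and reduce acyclicity to that of the classical Koszul resolution of $S=\kk[x_1,\dots,x_n]$ via $G$-equivariance and semisimplicity of $\kk G$; this makes your version somewhat more self-contained, but the underlying idea is the same.
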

\begin{proof}
Lemma~\ref{lem:skewgroup} implies that $A$ is Koszul, and Proposition~\ref{prop:McKaycomplex} ensures that we need only show that the complex with differentials given by \eqref{eqn:revisedcomplex} coincides with the bimodule Koszul complex. One approach is to write down explicitly the isomorphism between \eqref{eqn:Koszul} and the bimodule Koszul complex as presented, for example,  in Bocklandt--Schedler--Wemyss~\cite[Lemma~6.1]{BSW}. More directly,  by extending the ground category from vector spaces to $(U_0,U_0)$-bimodules, the bimodule Koszul complex constructed by Taylor~\cite[Equation (4.4)]{Taylor} provides the bimodule Koszul complex of $A$. For the basis $\chi_1,\dots, \chi_n$ of $V:=\kk^n$ and for a cell $\eta \in \Delta_{k}$ with $x^{\div(\eta)}=x_{i_1}\cdots x_{i_k}$, the assignment  $[\eta] \mapsto \chi_{i_1}\wedge \dots \wedge \chi_{i_k}$ determines an isomorphism from $U_k$ to $\bigwedge^k V$. It follows that the projective $(A,A)$-bimodules $P_k$ are those from the bimodule Koszul complex presented in \cite[Equation (4.4)]{Taylor}. In addition, our choice of signs in the differentials from \eqref{eqn:revisedcomplex} recovers those in Equations (4.1) and (4.4) from \cite{Taylor}. This completes the proof.
\end{proof}

  \section{Algebraically consistent dimer models}
  \label{sec:dimers}
 This section interprets algebraically consistent dimer models as consistent toric algebras.  The main result reproduces the $(A,A)$-bimodule resolution of $A$ from \cite{Broomhead, Davison, MozgovoyReineke} as a cellular resolution, and reconstructs the subdivision of the real two-torus determined by the dimer model in terms of anticanonical cycles in the quiver. The key step associates a label to each arrow in the quiver of the dimer model. The first appearance of this technique in the dimer model literature seems to be Eager~\cite[\S6]{Eager}.

 \subsection{On dimer models}
  A \emph{dimer model} $\Gamma$ on a torus is a polygonal cell decomposition of the surface of a real two-torus whose vertices and edges form a bipartite graph. Each vertex may be coloured either black or white so that each edge joins a black vertex to a white vertex. The dual cell decomposition of the torus has a vertex dual to every face, an edge dual to every edge, and face dual to every vertex of $\Gamma$.  In addition, we orient the edges of this dual decomposition so that a white vertex of the dimer lies on the left as the arrow crosses the dual edge of the dimer. The vertices and edges of this dual decomposition therefore define a quiver $Q=(Q_0, Q_1)$ embedded in the two-torus, with the additional property that the set of faces decomposes as the union $Q_2 = Q_2^+\cup Q_2^-$ of white faces (oriented anticlockwise) and black faces (oriented clockwise). 
  
To each face $F\in Q_2$ we associate the cycle  $w_F\in \kk Q_{\mathrm{cyc}}$ obtained by tracing all arrows around the boundary of $F$. The \emph{superpotential} of the dimer model $\Gamma$ is defined to be
  \[
  W_\Gamma := \sum_{F\in Q_2^+} w_F - \sum_{F\in Q_2^-} w_F.
  \]
For any face $F\in Q_2$ and arrow $a\in \supp(w_F)$,  choose $\head(a)\in Q_0$ as the starting point of $w_F$ and write $e_{\head(a)}w_Fe_{\head(a)} = aa_{l}\cdots a_1$. The partial derivative of the cycle $w_F$ with respect to $a$ is the path $\partial_a w_F = a_l\cdots a_1$ in $Q$. Extending $\kk$-linearly gives $\partial_a W_\Gamma\in \kk Q$ for each $a\in Q_1$, and consider the two-sided ideal $J_\Gamma:= (\partial_a W_\Gamma \mid a\in Q_1)$ in $\kk Q$. The \emph{superpotential algebra} of $\Gamma$ is
 \[
 A_\Gamma:=\kk Q/J_{\Gamma}.
 \]
 A \emph{perfect matching} $\Pi$ of the dimer model is a subset of the edges in $\Gamma$ such that every vertex is the endpoint of precisely one edge. Let $\supp(\Pi)$ denote the subset of $Q_1$ dual to the edges in $\Pi$. Since the arrows arising in any given term $\pm w_F$ of $W_\Gamma$ are dual to the set of edges emanating from the corresponding vertex of $\Gamma$, one can rewrite the superpotential in terms of any perfect matching $\Pi$ as $W_\Gamma = \sum_{a\in \supp(\Pi)} a\cdot \partial_a W$. Every arrow $a\in Q_1$ occurs in precisely two oppositely oriented faces, so every relation can be written as a path difference $\partial_aW_\Gamma = p_a^+ - p_a^-$, where $p_a^{\pm}$ are paths with tail at $\head(a)$ and head at $\tail(a)$. The binomials $\{p_a^+ -p_a^- \in \kk Q\mid a\in Q_1\}$ are the F\emph{-term relations} of $\Gamma$, and two paths $p_\pm$ in $Q$ are said to be F\emph{-term equivalent} if there is a finite sequence of paths $p_+=p_0, p_1, \dots, p_{k+1}=p_-$ in $Q$ such that for each $0\leq j\leq k$ we have $p_j- p_{j+1}=q_1(p_a^+-p_a^-)q_2$ for some paths $q_1, q_2$ in $Q$ and for some arrow $a\in Q_1$.   The F-term equivalence classes of paths form a $\kk$-vector space basis for $A_\Gamma$.

 Several notions of consistency for dimer models have been introduced in the literature, and here we consider that of algebraic consistency due to Broomhead~\cite{Broomhead}.  Put simply, a dimer model is \emph{algebraically consistent} if $A_\Gamma$ is isomorphic to an auxilliary algebra constructed from toric data encoded by $\Gamma$. We choose not to reconstruct this toric data here (though see the proof of Proposition~\ref{prop:dimerlabels}), but we do recall results of Broomhead~\cite{Broomhead} showing that for each algebraically consistent dimer model $\Gamma$ the centre of $A$ is a Gorenstein semigroup algebra $R=\kk[\sigma^\vee\cap M]$ of dimension three and, moreover, that there exists a collection of rank one reflexive $R$-modules $(\mathcal{B}_i \mid i\in Q_0)$ such that $A_\Gamma \cong \End_{R}(\bigoplus_{i\in Q_0} \mathcal{B}_i)$. To obtain our preferred normalisation, choose a vertex $0\in Q_0$ and consider instead the $R$-modules $E_i:= \mathcal{B}_i\otimes \mathcal{B}_0^{-1}$ for $i\in Q_0$. Thus, every algebraically consistent dimer model $\Gamma$ defines a collection of rank one reflexive sheaves
 \begin{equation}
 \label{eqn:dimercollection}
 \mathscr{E}:=(E_i \mid i\in Q_0)
 \end{equation} 
 on the Gorenstein toric variety $X:=\Spec R$ such that $A_\Gamma \cong \End_{R}(\bigoplus_{i\in Q_0} E_i)$. 

  \begin{lemma}
  \label{lem:dimerqos}
 The quiver $Q$ arising from an algebraically consistent dimer model $\Gamma$ is the quiver of sections of the collection $\mathscr{E}$ from \eqref{eqn:dimercollection}. In particular, $A_\Gamma\cong A_\mathscr{E}$. 
 \end{lemma}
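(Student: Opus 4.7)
The plan is to exhibit a labelling on the arrows of $Q$ intrinsically from the perfect matchings of $\Gamma$, verify that under these labels the arrows correspond to irreducible $T_M$-invariant sections, and then identify the quiver-and-relations data on each side.

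First I would define the labelling. Broomhead's analysis of algebraically consistent dimer models produces a distinguished set of extremal perfect matchings $\{\Pi_\rho \mid \rho \in \sigma(1)\}$ of $\Gamma$, indexed by the rays of $\sigma$. For each $a \in Q_1$, set
\[
\div(a) := \sum_{\rho \in \sigma(1)} c_\rho(a)\,\chi_\rho \in \NN^{\sigma(1)},
\]
where $c_\rho(a)$ is $1$ if the edge of $\Gamma$ dual to $a$ lies in $\Pi_\rho$ and $0$ otherwise; this is the construction outlined in Eager~\cite[\S 6]{Eager}. Rewriting the superpotential as $W_\Gamma = \sum_{a \in \supp(\Pi_\rho)} a\,\partial_a W_\Gamma$ for each extremal matching, one checks that the two paths attached to any F-term relation $\partial_a W_\Gamma = p_a^+ - p_a^-$ acquire equal labels, so the assignment $p \mapsto x^{\div(p)}$ descends to the quotient $A_\Gamma$.

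Second, I would promote this labelling to the statement that every arrow $a \in Q_1$ represents a $T_M$-invariant section in $\Hom_{\mathcal{O}_X}(E_{\tail(a)}, E_{\head(a)})$ with defining monomial $x^{\div(a)}$. This combines the isomorphism $A_\Gamma \cong \End_R(\bigoplus_i E_i)$ recalled before the lemma with Lemma~\ref{lem:reflexive}, and reduces to checking compatibility of $\div$ with the $\Cl(X)$-grading. The crucial input is Broomhead's identification $Z(A_\Gamma) \cong R = \kk[\sigma^\vee \cap M]$, which forces cycles based at a vertex of $Q$ to map under $\div$ into $\sigma^\vee \cap M$; the grading statement for arbitrary paths then follows from the sequence \eqref{eqn:Coxsequence}.

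Third I would verify that the arrows of $Q$ are exactly the irreducible $T_M$-invariant sections, thereby identifying $Q$ with the quiver of sections of $\mathscr{E}$. Surjectivity of $\kk Q \to \End_R(\bigoplus_i E_i)$ shows that every $T_M$-invariant generator of $\Hom_{\mathcal{O}_X}(E_i, E_j)$ is represented by a path, and its shortest F-term representative must be a single arrow; conversely, a factorisation of an arrow $a \in Q_1$ through an intermediate vertex $k$ would present $a$ in $A_\Gamma$ as an F-term equivalent path of length at least two, contradicting the linear independence of distinct arrows in $\kk Q$ modulo $J_\Gamma$. The isomorphism $A_\Gamma \cong A_\mathscr{E}$ then follows from Lemma~\ref{lem:algebra} via the common identification of both sides with $\End_R(\bigoplus_i E_i)$.

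The main obstacle will be the irreducibility argument in the third step: controlling the shortest element of an F-term equivalence class requires combinatorial input from the dimer model, specifically the zigzag-path structure that Broomhead exploits to verify algebraic consistency. The normalisation $E_0 = \mathcal{O}_X$ fixes any residual ambiguity in aligning the two descriptions of the arrow set.
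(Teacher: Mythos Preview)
Your route is correct but longer than needed. The paper's proof is a one-paragraph dimension count: the algebra isomorphism $A_\Gamma \cong \End_R\bigl(\bigoplus_i E_i\bigr)$ restricts to $e_j A_\Gamma e_i \cong \Hom_R(E_i,E_j)$ and, being multiplicative, identifies the subspaces spanned by irreducible elements on each side. Arrows of $Q$ from $i$ to $j$ form a basis of the first such subspace (since $J_\Gamma$ lies in the square of the arrow ideal), while arrows of the quiver of sections $Q'$ form a basis of the second by definition; hence the two arrow sets have the same cardinality for every pair $(i,j)$, giving $Q_1=Q_1'$. No labels are invoked, and the final statement is immediate from Lemma~\ref{lem:algebra}.

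Your steps 1 and 2 essentially reconstruct Proposition~\ref{prop:dimerlabels}, which in the paper's logical order sits \emph{downstream} of Lemma~\ref{lem:dimerqos} and in fact invokes it. Setting up the labelling via extremal perfect matchings is the right idea for that later result, but here it is superfluous: once you reach your step 3 the labels play no role, and the irreducibility argument you sketch there is already the whole of the paper's proof. The obstacle you anticipate---controlling shortest F-term representatives via zigzag combinatorics---dissolves once irreducibility is phrased as non-factorisation through intermediate idempotents rather than as minimal path length.
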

\begin{proof}
 Let $Q^\prime$ denote the quiver of sections of $\mathscr{E}$, so $Q_0=Q_0^\prime$ by construction. For any $i,j\in Q_0$, the isomorphism $A_\Gamma \cong \End_{R}(\bigoplus_{i\in Q_0} E_i)$ implies $e_jA_\Gamma e_i \cong \Hom_R(E_i,E_j)$. The set of arrows in $Q$ from $i$ to $j$ provides a basis for the space spanned by irreducible elements of $e_jA_\Gamma e_i$, while the set of arrows in $Q^\prime$ from $i$ to $j$ does likewise for $\Hom_R(E_i,E_j)$. This gives $Q_1^\prime = Q_1$ as required.  The final statement follows from Lemma~\ref{lem:algebra}.
 \end{proof}

  \subsection{Labels on arrows in a dimer model}
We may not deduce from Lemma~\ref{lem:dimerqos} that $\mathscr{E}$ is consistent because the dimer model algebra $A_\Gamma$ is not a priori isomorphic to the superpotential algebra $A_W$ determined by the collection $\mathscr{E}$. To establish the link between $A_\Gamma$ and $A_W$ we investigate the labelling of arrows in $Q$.  To begin we present an example that illustrates how our labelling of arrows in $Q$ ties in with the traditional approach to a dimer model $\Gamma$.
   
 \begin{example}
 \label{exa:dimerF1}
 Consider the dimer model $\Gamma$ on the real two-torus shown in Figure~\ref{fig:dimerF1}(a) and the quiver $Q$ embedded in the dual cell decomposition from Figure~\ref{fig:dimerF1}(b). 
  \begin{figure}[!ht]
    \centering
    \mbox{
    \subfigure[]{
    \psset{unit=1.25cm}
   \begin{pspicture}(0.2,0)(3,2.9)
   \pnode(0,0){A}  \pnode(3,0){B} \pnode(0,3){C}  \pnode(3,3){D} 
   \ncline{-}{A}{B}  \ncline{-}{A}{C}\ncline{-}{C}{D}  \ncline{-}{D}{B} 
   \pnode(1.5,0){K} \pnode(0,1.5){L} \pnode(1.5,3){M} \pnode(3,1.5){N} 
  \cnode[fillcolor=white](1.5,0.5){0.1}{E}\cnode[fillstyle=solid,fillcolor=black](1.5,2.5){0.1}{F} 
  \cnode[fillcolor=white](0.5,2.5){0.1}{G}\cnode[fillstyle=solid,fillcolor=black](0.5,1.5){0.1}{H}
  \cnode[fillcolor=white](2.5,1.5){0.1}{I}\cnode[fillstyle=solid,fillcolor=black](2.5,0.5){0.1}{J}
   \psset{linecolor=lightgray,linewidth=0.025}
    \cnodeput(2.4,2.3){Q}{ }\cnodeput(1.95,1.15){O}{ }\cnodeput(1.05,1.85){P}{ }\cnodeput(0.6,0.6){R}{ }
   \pnode(2.2,0){S} \pnode(2.2,3){T} \pnode(1.5,3){U} \pnode(3,1.5){V}
    \pnode(3,2.2){W} \pnode(0,2.2){X}\pnode(3,0.8){Y} \pnode(0,0.8){Z}\pnode(0.8,3){ZA} \pnode(0.8,0){ZB} 
  \ncline{->}{Q}{O}
  \ncline{-}{Q}{T}\ncline{->}{S}{O}
  \ncline{-}{Q}{W}\ncline{->}{X}{P}
  \ncline{->}{O}{P}
   \ncline{-}{O}{Y}\ncline{->}{Z}{R}
  \ncline{->}{P}{R}
  \ncline{-}{P}{ZA}\ncline{->}{ZB}{R}
  \ncline{-}{R}{K} \ncline{->}{M}{Q}
    \ncline{-}{R}{A}\ncline{->}{D}{Q}
  \ncline{-}{R}{L}\ncline{->}{N}{Q}
   \psset{linewidth=0.05, linecolor=black}
  \ncline{-}{E}{K}\ncline{-}{E}{J}\ncline{-}{E}{H}\ncline{-}{L}{H}\ncline{-}{H}{G}\ncline{-}{C}{G} 
  \ncline{-}{F}{G}\ncline{-}{F}{M}  \ncline{-}{F}{I}\ncline{-}{I}{N} \ncline{-}{I}{J}\ncline{-}{B}{J}\ncline{E}{F}
     \end{pspicture}}
      \qquad  \qquad
      \subfigure[]{
          \psset{unit=1.25cm}   
  \begin{pspicture}(0.2,0)(3,2.9)
   \pnode(0,0){A}  \pnode(3,0){B} \pnode(0,3){C}  \pnode(3,3){D} 
   \ncline{-}{A}{B}  \ncline{-}{A}{C}\ncline{-}{C}{D}  \ncline{-}{D}{B} 
   \pnode(1.5,0){K} \pnode(0,1.5){L} \pnode(1.5,3){M} \pnode(3,1.5){N} 
  \psset{linecolor=gray}
  \cnode[fillcolor=white](1.5,0.5){0.1}{E}\cnode[fillstyle=solid,fillcolor=lightgray](1.5,2.5){0.1}{F} 
  \cnode[fillcolor=white](0.5,2.5){0.1}{G}\cnode[fillstyle=solid,fillcolor=lightgray](0.5,1.5){0.1}{H}
  \cnode[fillcolor=white](2.5,1.5){0.1}{I}\cnode[fillstyle=solid,fillcolor=lightgray](2.5,0.5){0.1}{J}
  \ncline{-}{E}{K}\ncline{-}{E}{J}\ncline{-}{E}{H}\ncline{-}{L}{H}\ncline{-}{H}{G}\ncline{-}{C}{G} 
  \ncline{-}{F}{G}\ncline{-}{F}{M}  \ncline{-}{F}{I}\ncline{-}{I}{N} \ncline{-}{I}{J}\ncline{-}{B}{J}\ncline{E}{F}
  \psset{linecolor=black}
  \cnodeput(2.4,2.3){Q}{0}\cnodeput(1.95,1.15){O}{1}\cnodeput(1.05,1.85){P}{2}\cnodeput(0.6,0.6){R}{3}
   \pnode(2.2,0){S} \pnode(2.2,3){T} \pnode(1.5,3){U} \pnode(3,1.5){V}
    \pnode(3,2.2){W} \pnode(0,2.2){X}\pnode(3,0.8){Y} \pnode(0,0.8){Z}\pnode(0.8,3){ZA} \pnode(0.8,0){ZB} 
  \ncline{->}{Q}{O}\lput*{:110}{\tiny{1}}
  \ncline{-}{Q}{T}\ncline{->}{S}{O}\lput*{:260}{\tiny{2}}
  \ncline{-}{Q}{W}\ncline{->}{X}{P}\lput*{:20}{\tiny{3}}
  \ncline{->}{O}{P}\lput*{:215}{\tiny{4}}
   \ncline{-}{O}{Y}\lput*{:20}{\tiny{5}}\ncline{->}{Z}{R}
  \ncline{->}{P}{R}\lput*{:110}{\tiny{6}}
  \ncline{-}{P}{ZA}\lput*{:260}{\tiny{7}}\ncline{->}{ZB}{R}
  \ncline{-}{R}{K} \ncline{->}{M}{Q}\lput*{:35}{\tiny{8}}
  \ncline{-}{R}{A}\ncline{->}{D}{Q}\lput*{:130}{\tiny{9}}
  \ncline{-}{R}{L}\ncline{->}{N}{Q}\lput*{:230}{\tiny{10}}
  \end{pspicture}}
     }
 \caption{(a) a dimer model $\Gamma$; (b) the quiver $Q$ in the dual cell decomposition}
 \label{fig:dimerF1}
 \end{figure}
 Notice that $Q$ coincides with the quiver from Figure~\ref{fig:tiltingF1}(c), and we list the arrows $a_1,\dots,a_{10}$ in the same way. It is well known that the semigroup algebra $R=\kk[\sigma^\vee\cap M]$ arising from $\Gamma$ is determined by the cone $\sigma$ over the lattice polygon $P$ from Figure~\ref{fig:tiltingF1}(a). The extremal perfect matchings $\Pi_1, \Pi_2, \Pi_3, \Pi_4$ that correspond to the vertices $v_1, v_2, v_3, v_4\in P$ respectively are shown in Figure~\ref{fig:perfectmatchingsF1}. 
    \begin{figure}[!ht]
    \centering
    \mbox{
    \subfigure[]{
    \psset{unit=1cm}
   \begin{pspicture}(0.5,0)(2.5,2.9)
   \pnode(0,0){A}  \pnode(3,0){B} \pnode(0,3){C}  \pnode(3,3){D} 
   \ncline{-}{A}{B}  \ncline{-}{A}{C}\ncline{-}{C}{D}  \ncline{-}{D}{B} 
   \pnode(1.5,0){K} \pnode(0,1.5){L} \pnode(1.5,3){M} \pnode(3,1.5){N} 
  \cnode[fillcolor=white](1.5,0.5){0.1}{E}\cnode[fillstyle=solid,fillcolor=black](1.5,2.5){0.1}{F} 
  \cnode[fillcolor=white](0.5,2.5){0.1}{G}\cnode[fillstyle=solid,fillcolor=black](0.5,1.5){0.1}{H}
  \cnode[fillcolor=white](2.5,1.5){0.1}{I}\cnode[fillstyle=solid,fillcolor=black](2.5,0.5){0.1}{J}
 \psset{linewidth=0.05}\ncline{-}{E}{H}\ncline{-}{C}{G} 
\ncline{-}{F}{I}\ncline{-}{B}{J}
  \psset{linecolor=lightgray}
    \ncline{-}{F}{G}\ncline{-}{E}{J}\ncline{-}{L}{H}\ncline{-}{I}{N} \ncline{-}{I}{J}\ncline{E}{F}\ncline{-}{H}{G}\ncline{-}{F}{M}    \ncline{-}{E}{K}
      \end{pspicture}}
      \qquad  \qquad
      \subfigure[]{
          \psset{unit=1cm}  
    \begin{pspicture}(0.5,0)(2.5,2.9)
   \pnode(0,0){A}  \pnode(3,0){B} \pnode(0,3){C}  \pnode(3,3){D} 
   \ncline{-}{A}{B}  \ncline{-}{A}{C}\ncline{-}{C}{D}  \ncline{-}{D}{B} 
   \pnode(1.5,0){K} \pnode(0,1.5){L} \pnode(1.5,3){M} \pnode(3,1.5){N} 
  \cnode[fillcolor=white](1.5,0.5){0.1}{E}\cnode[fillstyle=solid,fillcolor=black](1.5,2.5){0.1}{F} 
  \cnode[fillcolor=white](0.5,2.5){0.1}{G}\cnode[fillstyle=solid,fillcolor=black](0.5,1.5){0.1}{H}
  \cnode[fillcolor=white](2.5,1.5){0.1}{I}\cnode[fillstyle=solid,fillcolor=black](2.5,0.5){0.1}{J}
 \psset{linecolor=black, linewidth=0.05} \ncline{-}{C}{G}\ncline{-}{B}{J}\ncline{-}{L}{H} \ncline{-}{I}{N} \ncline{E}{F}
  \psset{linecolor=lightgray}
  \ncline{-}{E}{K}\ncline{-}{E}{J}\ncline{-}{E}{H}\ncline{-}{H}{G}
  \ncline{-}{F}{G}\ncline{-}{F}{M}  \ncline{-}{F}{I} \ncline{-}{I}{J}
   \end{pspicture}}
      \qquad  \qquad
      \subfigure[]{
          \psset{unit=1cm}  
    \begin{pspicture}(0.7,0)(2.5,3)
   \pnode(0,0){A}  \pnode(3,0){B} \pnode(0,3){C}  \pnode(3,3){D} 
   \ncline{-}{A}{B}  \ncline{-}{A}{C}\ncline{-}{C}{D}  \ncline{-}{D}{B} 
   \pnode(1.5,0){K} \pnode(0,1.5){L} \pnode(1.5,3){M} \pnode(3,1.5){N} 
  \cnode[fillcolor=white](1.5,0.5){0.1}{E}\cnode[fillstyle=solid,fillcolor=black](1.5,2.5){0.1}{F} 
  \cnode[fillcolor=white](0.5,2.5){0.1}{G}\cnode[fillstyle=solid,fillcolor=black](0.5,1.5){0.1}{H}
  \cnode[fillcolor=white](2.5,1.5){0.1}{I}\cnode[fillstyle=solid,fillcolor=black](2.5,0.5){0.1}{J}
 \psset{linecolor=black, linewidth=0.05} \ncline{-}{F}{G}\ncline{-}{L}{H} \ncline{-}{I}{N} \ncline{-}{E}{J}
  \psset{linecolor=lightgray}
  \ncline{-}{E}{K}\ncline{-}{E}{H}\ncline{-}{H}{G}\ncline{-}{C}{G} 
 \ncline{-}{F}{M}  \ncline{-}{F}{I} \ncline{-}{I}{J}\ncline{-}{B}{J}\ncline{E}{F}
     \end{pspicture}}
      \qquad  \qquad
      \subfigure[]{
          \psset{unit=1cm}  
    \begin{pspicture}(0.5,0)(2.5,2.9)
   \pnode(0,0){A}  \pnode(3,0){B} \pnode(0,3){C}  \pnode(3,3){D} 
   \ncline{-}{A}{B}  \ncline{-}{A}{C}\ncline{-}{C}{D}  \ncline{-}{D}{B} 
   \pnode(1.5,0){K} \pnode(0,1.5){L} \pnode(1.5,3){M} \pnode(3,1.5){N} 
  \cnode[fillcolor=white](1.5,0.5){0.1}{E}\cnode[fillstyle=solid,fillcolor=black](1.5,2.5){0.1}{F} 
  \cnode[fillcolor=white](0.5,2.5){0.1}{G}\cnode[fillstyle=solid,fillcolor=black](0.5,1.5){0.1}{H}
  \cnode[fillcolor=white](2.5,1.5){0.1}{I}\cnode[fillstyle=solid,fillcolor=black](2.5,0.5){0.1}{J}
 \psset{linecolor=black, linewidth=0.05} 
\ncline{-}{H}{G}  \ncline{-}{E}{K} \ncline{-}{I}{J}\ncline{-}{F}{M}
 \psset{linecolor=lightgray}
 \ncline{-}{E}{J}\ncline{-}{E}{H}\ncline{-}{L}{H}\ncline{-}{C}{G} 
  \ncline{-}{F}{G}  \ncline{-}{F}{I}\ncline{-}{I}{N}\ncline{-}{B}{J}\ncline{E}{F}
    \end{pspicture}}
     }
 \caption{Perfect matchings: (a) $\Pi_1$; (b) $\Pi_2$; (c) $\Pi_3$; (d) $\Pi_4$.}
 \label{fig:perfectmatchingsF1}
 \end{figure} 
To compute the labels, note from Figure~\ref{fig:dimerF1}(b) that $\supp(\Pi_1) = \{a_1, a_6, a_9\}$. Since $\Pi_1$ is the only extremal perfect matching containing either $a_1$ or $a_6$, Proposition~\ref{prop:perfmatchlabels} implies that both $x^{\div(a_1)}$ and $x^{\div(a_6)}$ are pure powers of $x_1$, whereas $a_9\in \supp(\Pi_1)\cap\supp(\Pi_2)$, so $x_1x_2$ divides $x^{\div(a_9)}$. Lemma~\ref{lem:multiplicity} below shows that the labelling monomial on each arrow in a dimer model is reduced, so $x^{\div(a_1)}=x^{\div(a_6)}=x_1$ and $x^{\div(a_9)}=x_1x_2$. It is now easy to see that the labelling monomials on the arrows of $Q$ are precisely those from Figure~\ref{fig:tiltingF1}(b). The superpotential 
\[
W_\Gamma= -a_8a_7a_4a_1 + a_8a_6a_4a_2 -  a_9a_5a_2 + a_9a_7a_3 -a_{10}a_6a_3 + a_{10}a_5a_1
\]
coincides, up to the sign of each term,  with that from Example~\ref{exa:F1tilting} and hence $A_\Gamma\cong A_W$.
\end{example}

  \begin{proposition}
  \label{prop:dimerlabels}
 Let $Q$ denote the quiver arising from an algebraically consistent dimer model $\Gamma$, and let $\mathscr{E}$ be the collection from \eqref{eqn:dimercollection}. Then each $a\in Q_1$ satisfies
 \begin{equation}
 \label{eqn:label}
 x^{\div(a)} =  \prod_{\{\rho\in \sigma(1) \mid a\in \supp(\Pi_\rho)\}} x_\rho,
 \end{equation}
  where $\Pi_\rho$ is the unique perfect matching of $\Gamma$ corresponding to the vertex $\rho\in \sigma(1)$.
\end{proposition}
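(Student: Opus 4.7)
The plan is to deduce the formula from the divisibility characterisation in Proposition~\ref{prop:perfmatchlabels}, combined with the combinatorial description of the perfect matchings $\Pi_\rho$ that is available in the dimer model setting. By \eqref{eqn:supportlabels}, the variables $x_\rho$ appearing in $x^{\div(a)}$ are exactly those for which $a \in \supp(\Pi_\rho)$, so the squarefree monomial $m_a := \prod_{\rho : a \in \supp(\Pi_\rho)} x_\rho$ divides $x^{\div(a)}$ and no other $x_\rho$ occurs. Hence to prove equality it is enough, by \eqref{eqn:adjoint}, to show that
\[
\big\langle \Pi_\rho, \pi(\chi_a)\big\rangle \in \{0,1\}
\]
for every $\rho \in \sigma(1)$ and every $a \in Q_1$.

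To establish this, I will invoke the dictionary built into algebraic consistency: each vertex $v_\rho$ of the lattice polygon $P$ is represented by a genuine perfect matching $\widetilde{\Pi}_\rho$ of the bipartite graph $\Gamma$, namely a subset of edges meeting every vertex of $\Gamma$ exactly once. Under the duality between edges of $\Gamma$ and arrows of $Q$, the characteristic function of $\widetilde{\Pi}_\rho$ is a $\{0,1\}$-valued vector in $\ZZ^{Q_1}$ whose pairing with $\pi(\chi_a)$ is precisely the indicator of whether $a \in \supp(\widetilde{\Pi}_\rho)$. Under the inclusion $\pi^*\colon \ZZ(Q)^\vee\hookrightarrow \ZZ^{Q_1}$ this characteristic function lifts to a primitive lattice point on a one-dimensional face of the cone $C$. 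I then verify that this lattice point agrees with $\Pi_\rho = \pi_2^*(\chi_\rho)$ by showing that both project to $v_\rho$ under $\psi^*$ and both lie on the ray of $C$ dual to the facet $F_\rho\subset \sigma^\vee$; primitivity forces them to coincide.

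Granted this identification, the pairing $\langle \Pi_\rho, \pi(\chi_a)\rangle$ simply records whether $a$ lies in the edge subset $\widetilde{\Pi}_\rho$ and therefore takes values in $\{0,1\}$, which combined with the first step yields $x^{\div(a)} = m_a$. The main obstacle is the second step, namely the matching of the abstract polyhedral object $\Pi_\rho$ arising from our quiver of sections with a concrete perfect matching of $\Gamma$ in the sense of Broomhead; this is precisely where algebraic consistency is used, linking the module-theoretic data encoded in the collection $\mathscr{E}$ with the combinatorial data of the dimer model. This identification also furnishes a conceptual reason for the auxiliary fact, invoked in Example~\ref{exa:dimerF1} via Lemma~\ref{lem:multiplicity}, that the labelling monomial on each arrow of a dimer model quiver is squarefree.
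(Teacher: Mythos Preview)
Your outline matches the paper's strategy almost exactly: reduce to showing $\langle \Pi_\rho,\pi(\chi_a)\rangle\in\{0,1\}$ via \eqref{eqn:adjoint}, then identify $\Pi_\rho=\pi_2^*(\chi_\rho)$ with the characteristic function of a genuine dimer perfect matching $\widetilde{\Pi}_\rho$, which is manifestly $\{0,1\}$-valued. The paper does precisely this, packaging the multiplicity bound as Lemma~\ref{lem:multiplicity}.

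The gap in your proposal is the sentence ``Under the inclusion $\pi^*\colon \ZZ(Q)^\vee\hookrightarrow (\ZZ^{Q_1})^\vee$ this characteristic function lifts to a primitive lattice point on a one-dimensional face of the cone $C$.'' For such a lift to exist, the characteristic function of $\widetilde{\Pi}_\rho$ must vanish on $\Ker(\pi)$, and you have not said why. A priori, $\pi=(\inc,\div)$ is built from the quiver-of-sections data of $\mathscr{E}$, whereas the dimer perfect matchings are known only to descend through the Mozgovoy--Reineke weight map $\wt$ whose kernel is generated by the F-term vectors $v(p_a^+)-v(p_a^-)$. Showing that both projections to $v_\rho$ under $\psi^*$ agree does not help until you know $\widetilde{\Pi}_\rho$ lives in $\ZZ(Q)^\vee$ in the first place; nor can you invoke Ishii--Ueda uniqueness of extremal perfect matchings over $v_\rho$ before the two cones are identified.

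This is exactly where the paper spends its effort. Using $A_\Gamma\cong A_{\mathscr{E}}$ from Lemma~\ref{lem:dimerqos}, each F-term relation lies in $J_{\mathscr{E}}$, hence $v(p_a^+)-v(p_a^-)\in\Ker(\pi)$; then the elongation argument from the proof of Lemma~\ref{lem:Kergens} (with $J_\Gamma$ in place of $J_W$ and algebraic consistency in place of consistency) shows these vectors generate $\Ker(\pi)$. Thus $\pi=\wt$, $\NN(Q)=\Lambda_\Gamma^+$, and the cone $C$ coincides with Broomhead's cone, so dimer perfect matchings and the perfect matchings of Definition~\ref{def:perfectmatching} are literally the same objects. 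Only then does the identification $\widetilde{\Pi}_\rho=\Pi_\rho$ follow (via Ishii--Ueda uniqueness together with Proposition~\ref{prop:perfmatchlabels}), and the $\{0,1\}$ bound drops out. Your final paragraph acknowledges that ``this is precisely where algebraic consistency is used'', but the mechanism---identifying $\Ker(\pi)$ with the F-term lattice---needs to be made explicit for the argument to go through.
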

\begin{proof}
The quiver of sections $Q$ of $\mathscr{E}$ encodes the commutative diagram from Lemma~\ref{lem:diagram}. For each $a\in Q_1$, the F-term relation $p_a^+ -p_a^-$ is a binomial contained in the defining ideal of $A_{\Gamma}$.  Lemma~\ref{lem:dimerqos} gives $A_\Gamma\cong A_\mathscr{E}$, so $p_a^+ -p_a^-\in J_\mathscr{E}$ and we deduce that $p_a^{\pm}$ share not only the same head and tail but also the same labelling divisor. This implies that $v(p_a^+) - v(p_a^-)\in \Ker(\pi)$. The proof of Lemma~\ref{lem:Kergens} applies verbatim to show that  $\pi\colon \ZZ^{Q_1}\to \ZZ(Q)$ coincides with the map 
  \[
 \wt\colon \ZZ^{Q_1}\to \Lambda_\Gamma:= \ZZ^{Q_1}/\big(v(p_a^+) - v(p_a^-)\in \ZZ^{Q_1} \mid \;a\in Q_1\big)
    \]
from Mozgovoy--Reineke~\cite[\S 3]{MozgovoyReineke}.  Then $\NN(Q)$ coincides with the semigroup $\Lambda_\Gamma^+:=\wt(\NN^{Q_1})$ that was introduced by Broomhead~\cite[Example 5.5]{Broomhead}  in defining algebraic consistency (compare Mozgovoy~\cite[Remark~3.8]{Mozgovoy}). It follows that our cone $C$ coincides with the cone dual to $\Lambda_\Gamma^+$ from \cite{Broomhead}, so the perfect matchings from Definition~\ref{def:perfectmatching} agree with those from \cite[Lemma~2.11]{Broomhead}.  Each primitive lattice generator $v_\rho\in \rho$ on an extremal ray of the cone $\sigma$ defining $R=\kk[\sigma^\vee\cap M]$ supports only one extremal perfect matching on a dimer model  (see \cite[Proposition~6.5]{IshiiUeda2}), so Proposition~\ref{prop:perfmatchlabels} implies that this perfect matching is $\Pi_\rho=\pi_2^*(\chi_\rho)$ and, moreover, that $a\in \supp(\Pi_\rho)$ if and only if $x_\rho$ divides $x^{\div(a)}$. If $m_\rho(a)$ denotes the multiplicity of $x_\rho$ in $x^{\div(a)}$, we obtain $x^{\div(a)} =  \prod_{\{\rho\in \sigma(1) \mid a\in \supp(\Pi_\rho)\}} x_\rho^{m_\rho(a)}$. Lemma~\ref{lem:multiplicity} to follow establishes that each $m_\rho(a)$ is either 0 or 1. This completes the proof.
\end{proof}

\begin{lemma}
\label{lem:multiplicity} 
For any algebraically consistent dimer model $\Gamma$ with associated quiver $Q$, and for any arrow $a\in Q_1$, the monomial $x^{\div(a)}$ divides $\prod_{\rho\in \sigma(1)} x_\rho$.
\end{lemma}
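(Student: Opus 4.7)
The plan is to reduce the claim to showing that every face cycle $w_F$ has label $x^{\div(w_F)} = \prod_{\rho \in \sigma(1)} x_\rho$, i.e., that $\div(w_F) = (1,\ldots,1) \in \NN^d$. Given this, the lemma follows at once: each arrow $a \in Q_1$ lies on the boundary of some (in fact exactly two) faces $F$ of $Q$, so $w_F = a\cdot p$ for a path $p$ with $\div(p)\in \NN^d$; then $\div(a) + \div(p) = (1,\ldots,1)$ forces $\div(a) \leq (1,\ldots,1)$ coordinate-wise, and hence $x^{\div(a)}$ divides $\prod_\rho x_\rho$.

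To establish $\div(w_F) = (1,\ldots,1)$ I would argue in two steps. First, observe that all face cycles share a common label $D_0 := \div(w_F)$: if $F^+, F^- \in Q_2$ are the two faces of $Q$ that share an arrow $a$, then $w_{F^\pm} = a\cdot p_a^\pm$, and the F-term relation $p_a^+ - p_a^- \in J_\Gamma = J_\mathscr{E}$ (by Lemma~\ref{lem:dimerqos}) forces $\div(p_a^+) = \div(p_a^-)$, so $\div(w_{F^+}) = \div(w_{F^-})$; connectedness of $\Gamma$ then propagates this to all faces. Second, using the dimer condition that each face of $Q$ contains exactly one arrow in each $\supp(\Pi_\rho)$, pairing $\Pi_\rho$ with $\pi(v(w_F))$ gives $\langle \chi_\rho, D_0\rangle = m_\rho(a_*^F)$ for the unique $a_*^F \in \partial F \cap \supp(\Pi_\rho)$, where $m_\rho(a)$ denotes the $x_\rho$-multiplicity of $x^{\div(a)}$. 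Since $D_0$ is independent of $F$ and every arrow in $\supp(\Pi_\rho)$ arises as some $a_*^F$, the multiplicity $m_\rho(a)$ is constant on $\supp(\Pi_\rho)$; call it $m_\rho \geq 1$, so $D_0 = \sum_\rho m_\rho\chi_\rho$.

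The main obstacle is to show $m_\rho = 1$ for each $\rho$. The most direct route appeals to Broomhead's explicit construction of the reflexive modules $\mathcal{B}_i$ (from which the collection $\mathscr{E}$ in \eqref{eqn:dimercollection} is derived): under the isomorphism $A_\Gamma \cong \End_R(\bigoplus_i \mathcal{B}_i)$, each arrow $a$ corresponds to the $T$-invariant irreducible section $\prod_{\rho : a \in \supp(\Pi_\rho)} x_\rho \in \Hom_R(\mathcal{B}_i, \mathcal{B}_j)$. Since tensoring with $\mathcal{B}_0^{-1}$ to pass to $\mathscr{E} = (E_i)$ preserves these labels, we conclude that $\div(a) = \sum_{\rho : a \in \supp(\Pi_\rho)} \chi_\rho$ and hence $m_\rho = 1$. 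The technical heart of the argument is this identification of Broomhead's combinatorial labels with the paper's section-theoretic labels $\div(a)$, which amounts to verifying that the two constructions select the same $T$-invariant irreducible element of each $\Hom_R(E_i, E_j)$; a self-contained alternative would require tracking perfect matchings and their homology classes on the real two-torus underlying $\Gamma$ more carefully.
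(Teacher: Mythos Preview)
Your proposal is correct in outline but takes a substantially longer route than the paper, and the detour does not help with the part you yourself identify as the ``main obstacle''.

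The paper's argument is essentially two lines. By \eqref{eqn:adjoint} (already established in Proposition~\ref{prop:perfmatchlabels}), the $x_\rho$-multiplicity of $x^{\div(a)}$ equals the pairing $\langle \Pi_\rho,\pi(\chi_a)\rangle = \langle \pi^*(\Pi_\rho),\chi_a\rangle$. The proof of Proposition~\ref{prop:dimerlabels}, which invokes the lemma, has just verified that the paper's perfect matchings (Definition~\ref{def:perfectmatching}) coincide with Broomhead's dimer-model perfect matchings. But a dimer-model perfect matching is by definition a subset of edges, so its pullback $\pi^*(\Pi_\rho)\in(\ZZ^{Q_1})^\vee$ is the characteristic function of $\supp(\Pi_\rho)$ and therefore $\{0,1\}$-valued. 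Hence $m_\rho(a)\in\{0,1\}$ for every $a$, which is the claim.

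Your steps~1 and~2 (constancy of $\div(w_F)$, then constancy of $m_\rho$ on $\supp(\Pi_\rho)$) are correct and pleasant, but they are unnecessary: the direct pairing argument already gives $m_\rho(a)\in\{0,1\}$ arrow by arrow, without ever looking at face cycles. More importantly, your step~3 --- appealing to Broomhead's explicit section $\prod_{\rho:a\in\supp(\Pi_\rho)}x_\rho$ and identifying it with the paper's $\div(a)$ --- is exactly the content of Proposition~\ref{prop:dimerlabels}, for which the present lemma is the supporting step. So your route either risks circularity or, if you unwind Broomhead's definition of the map $A_\Gamma\to\End_R(\bigoplus_i\mathcal{B}_i)$ carefully, simply rediscovers the $\{0,1\}$-cochain observation above inside a more elaborate wrapper. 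The cleaner fix is to drop steps~1 and~2 and argue directly from the pairing.
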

\begin{proof}
In light of \eqref{eqn:adjoint}, we need only show that $\langle \Pi_\rho, \pi(\chi_a)\rangle \in \{0,1\}$. As Broomhead~\cite[\S 2.3]{Broomhead} remarks, we may regard each perfect matching $\Pi$ in $Q$ as a 1-cochain $\pi^*(\Pi)\in (\ZZ^{Q_1})^\vee$ with values in $\{0,1\}$, where $\pi^*\colon \ZZ(Q)^\vee\to (\ZZ^{Q_1})^\vee$ is the natural inclusion. In particular, for any arrow $a\in Q_1$, the dual pairing is simply $\langle \Pi_\rho, \pi(\chi_a)\rangle = \langle \pi^*(\Pi_\rho), \chi_a\rangle \in \{0,1\}$ as required.
\end{proof}

\begin{remark}
If we knew at this stage that $A_\mathscr{E}$ was consistent then Lemma~\ref{lem:Kergens} could be applied directly in the proof of Proposition~\ref{prop:dimerlabels} and Proposition~\ref{prop:div(a)} would make Lemma~\ref{lem:multiplicity} superfluous. However, we do not establish this fact until Theorem~\ref{thm:unsignedsuperpotentials} below.
\end{remark}

We are now in a position to show that algebraically consistent dimer models define consistent toric algebras. 
 It is convenient to introduce temporarily the unsigned version of the dimer model superpotential, namely, the element $\overline{W}_\Gamma :=  \sum_{F\in Q_2} w_F$.
 
\begin{theorem}
\label{thm:unsignedsuperpotentials}
For an algebraically consistent dimer model $\Gamma$, the unsigned version of the dimer model superpotential coincides with the superpotential $W$ associated to $\mathscr{E}$, namely
\[
\overline{W}_\Gamma = \sum_{p\in \ACcyc} p.
\]
In particular, the toric algebra $A_\mathscr{E}$ is consistent.
 \end{theorem}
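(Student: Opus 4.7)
The plan is to establish the identity in $\kk Q_{\mathrm{cyc}}$ by exhibiting a bijection between the faces of $Q\subset\mathbb{T}^2$ and the cyclic classes of anticanonical cycles, and then to deduce consistency by matching the two ideals of relations. For the first direction, I would verify that every face cycle $w_F$ is anticanonical: if $v$ is the dimer vertex of $\Gamma$ dual to $F$, then the arrows in $\supp(w_F)$ are dual to the edges of $\Gamma$ incident to $v$. Because each extremal perfect matching $\Pi_\rho$ covers $v$ by exactly one edge, precisely one arrow of $\supp(w_F)$ lies in $\supp(\Pi_\rho)$; summing via the label formula of Proposition~\ref{prop:dimerlabels} yields $\div(w_F)=\sum_{\rho\in\sigma(1)}\chi_\rho$, so $w_F\in\ACcyc$.

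The essential combinatorial step is the converse, that every cyclic class in $\ACcyc$ is represented by some $w_F$. I would argue that every anticanonical cycle $\gamma$ is null-homotopic on $\mathbb{T}^2$. For any $\rho,\rho'\in\sigma(1)$ the difference $\Pi_\rho-\Pi_{\rho'}$ is a $1$-cycle on $\Gamma$ (both matchings have the same $0$-boundary), and its intersection number with $\gamma$ equals $\langle\Pi_\rho-\Pi_{\rho'},\pi(v(\gamma))\rangle=1-1=0$ since $\gamma$ meets each matching exactly once. For an algebraically consistent dimer model on a toric Calabi--Yau threefold, the classes $[\Pi_\rho-\Pi_{\rho'}]$ generate $H_1(\mathbb{T}^2;\ZZ)$---via the Ishii--Ueda identification they correspond to the lattice differences $v_\rho-v_{\rho'}$ spanning the rank-two sublattice of $N$ transverse to the Gorenstein direction---so $[\gamma]=0$. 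A null-homotopic $\gamma$ bounds a $2$-chain built from faces of $Q$; decomposing this chain and applying $\div$ forces it to consist of a single face, so $\gamma=w_F$ as a cyclic path. Injectivity of $F\mapsto[w_F]$ is immediate from the cellular structure of $\mathbb{T}^2$, completing the identity $\overline{W}_\Gamma=W$.

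For consistency, the identity $W=\overline{W}_\Gamma$ means that for each arrow $a\in Q_1$ the partial derivative $\partial_a W=p_a^++p_a^-$ is the sum of the two face-cycle fragments around $a$ with $a$ removed. Because the adjacent faces lie on opposite sides of $a$, the paths $p_a^\pm$ share neither initial nor final arrow, so $a\in\mathscr{P}$ and $p_a^+-p_a^-\in J_W$; thus $J_W$ contains every F-term relation of $\Gamma$, giving $J_W\supseteq J_\Gamma$. The reverse inclusion $J_W\subseteq J_\mathscr{E}$ holds on generators by construction of $\partial_q W$, and Lemma~\ref{lem:dimerqos} together with algebraic consistency of $\Gamma$ gives $J_\Gamma=J_\mathscr{E}$. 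Hence $J_W=J_\mathscr{E}$, so $\mathscr{E}$ is consistent. The principal obstacle is the null-homotopy argument, which genuinely uses the Calabi--Yau threefold geometry of the dimer via the correspondence between matching differences and the lattice $N$.
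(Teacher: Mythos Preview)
Your forward direction---that every face cycle $w_F$ is anticanonical---is exactly the paper's argument, via Proposition~\ref{prop:dimerlabels} and the observation that each extremal matching meets $\supp(w_F)$ in precisely one arrow.

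For the converse you take a genuinely different, topological route, and this is where the proposal has a real gap. Granting that $\gamma$ is null-homologous, write $\gamma=\partial C$ with $C=\sum_F n_F[F]$. Applying $\div$ and using $\div(w_F)=(1,\dots,1)$ for every face gives only the scalar identity
\[
\sum_{F\ \text{white}} n_F-\sum_{F\ \text{black}} n_F=1,
\]
which in no way forces $C$ to be a single face with coefficient one. One can easily imagine $2$-chains of the form $[F_1]+[F_2]-[F_3]$ satisfying this constraint. To finish your argument you would need an additional mechanism---for instance, showing that the lift $\tilde\gamma$ to $\RR^2$ is an embedded simple closed curve and then invoking a monotonicity property coming from the fact that $\gamma$ crosses each matching exactly once---but none of that is supplied, and it is not a one-line consequence of ``applying $\div$''. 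You also need a further step to pass from equality of $1$-chains to equality of cyclic paths.

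By contrast, the paper avoids homology entirely and exploits algebraic consistency directly. It writes the anticanonical cycle as $p=a'q$ with $a'\in\supp(\Pi_\rho)$, notes that $q$ and the two face-completions $p_{a'}^{\pm}$ share head, tail and label so that their pairwise differences lie in $J_{\mathscr{E}}=J_\Gamma$, and then uses that $a'$ is the unique arrow with its head, tail and label and occurs in exactly two terms of $W_\Gamma$ to conclude $q\in\{p_{a'}^{+},p_{a'}^{-}\}$ as an honest path. This is where algebraic consistency does the heavy lifting that your $\div$-counting cannot.

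Your deduction of consistency from $\overline{W}_\Gamma=W$ is broadly correct, but the assertion that $p_a^{\pm}$ share neither initial nor final arrow (so that every arrow lies in $\mathscr{P}$) is not automatic from ``opposite sides''; it requires ruling out degeneracies such as $2$-valent dimer vertices or doubled edges, which for algebraically consistent dimers should be cited rather than assumed.
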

\begin{proof}
For any face $F$ in the cell decomposition $\Gamma$,  Proposition~\ref{prop:dimerlabels} implies that
\begin{equation}
\label{eqn:divp}
x^{\div(w_F)} = \prod_{a\in \supp(w_F)}  \prod_{\{\rho\in \sigma(1) \mid a\in \supp(\Pi_\rho)\}} x_\rho.
\end{equation}
 Since each $\Pi_\rho$ is a perfect matching and since $w_F$ is a term in $\overline{W}_\Gamma$, the set $ \supp(w_F)\cap \supp(\Pi_\rho)$ consists of precisely one arrow. This gives $x^{\div(w_F)} =  \prod_{\rho\in \sigma(1)} x_\rho$, so every term of $\overline{W}_\Gamma$ is an anticanonical cycle. Conversely, let $p$ be an anticanonical cycle in $Q$. Fix $\rho\in \sigma(1)$ and write 
\begin{equation}
\label{eqn:PhiGamma}
W_\Gamma = \sum_{a\in \supp(\Pi_\rho)} a \cdot\partial_aW_\Gamma  = \sum_{a\in \supp(\Pi_\rho)} a (p_a^+-p_a^-).
\end{equation}
Proposition~\ref{prop:perfmatchlabels} provides an arrow $a^\prime\in \supp(p)\cap \supp(\Pi_\rho)$ which gives one summand in \eqref{eqn:PhiGamma}, so after choosing a new starting point of $p$ if necessary, we may write $p=a^\prime q$ for some path $q$ in $Q$. Since each term $w_F$ in $\overline{W}_\Gamma$ satisfies $x^{\div(w_F)} =  \prod_{\rho\in \sigma(1)} x_\rho$,  this holds true for the terms $a^\prime p_{a^\prime}^+$ and $a^\prime p_{a^\prime}^-$. But now, each of $q, p_{a^\prime}^+, p_{a^\prime}^-$ is a path in $Q$ from $\head(a^\prime)$ to $\tail(a^\prime)$ with labelling monomial $(\prod_{\rho\in \sigma(1)} x_\rho)/x^{\div(a^\prime)}$. Since $Q$ is a quiver of sections and since $A_\Gamma$ is algebraically consistent, the ideal $(\partial_a W_\Gamma \mid a\in Q_1)$ must contain each of $\pm(q-p_{a^\prime}^-)$ and $\pm(p_{a^\prime}^+-q)$ in addition to $\pm(p_{a^\prime}^+-p_{a^\prime}^-)$. However, $a^\prime\in Q_1$ is the only arrow from $\tail(a^\prime)$ to $\head(a^\prime)$ with label $\div(a^\prime)$, and since this arrow appears in precisely two terms of $W_\Gamma$, it follows that $q$ must equal one of $p_a^{\pm}$. This gives $p= w_F$, so the anticanonical cycle $p$ is a term of $\overline{W}_{\Gamma}$ and hence $\overline{W}_\Gamma = \sum_{p\in \ACcyc} p$.

To show that $A_\mathscr{E}$ is consistent it suffices by Lemma~\ref{lem:dimerqos} to show that $A_\Gamma\cong A_W$. Since the terms of the superpotentials $W_\Gamma$ and $W$ coincide up to sign we have $\mathscr{P}=Q_1$, so the inclusion $J_W\subseteq J_\mathscr{E} = (\partial_a W_\Gamma \mid a\in Q_1)$ is equality as required.
\end{proof}

 \begin{remark}
 \label{rem:dimerconclusion}
For generic $\theta\in \Wt(Q)$, Theorem~\ref{thm:cohcomp} implies that the crepant resolution $\mathcal{M}_\theta\to X$ from Ishii--Ueda~\cite{IshiiUeda1} coincides with the morphism $\tau_\theta\colon Y_\theta\to Y_0$ obtained by variation of GIT quotient. This strengthens slightly an observation of Mozgovoy~\cite[Proposition~4.4]{Mozgovoy}.\end{remark}

\subsection{Reconstructing the dimer}
For an algebraically consistent dimer model $\Gamma$, let $Q$ denote the quiver of sections of the collection $\mathscr{E}$ from \eqref{eqn:dimercollection}. Before introducing the toric cell complex for $Q$, we pause to show how $\Gamma$ can be reconstructed from the covering quiver $\widetilde{Q}\subset \RR^d$.

List the standard basis of $\ZZ^d$ to be compatible with a cyclic order $v_1, \dots, v_d$ on the vertices of the polygon $P\subset N\otimes_\ZZ \RR$, and choose coordinates $N=\ZZ\langle \mathbf{e}_1,\mathbf{e}_2,\mathbf{e}_3\rangle$ so that $P$ lies in the affine plane at height one. Write $M= \ZZ\langle x,y,z\rangle$ for the dual basis and $B$ for the matrix defining $\iota\colon M\hookrightarrow \ZZ^d$ from \eqref{eqn:diagram}. Extend scalars on the map $\iota^*$ defined by $B^t$ to obtain $\iota^*_{\RR}\colon \RR^d \to N\otimes_\ZZ \RR$. For the standard inner product on $\RR^d$, orthogonal projection $f\colon \RR^d\to M\otimes_\ZZ\RR$ is defined by $(B^tB)^{-1}B^t$, so $f$ is simply the composition of $\iota^*_\RR$ with the change of basis $(B^tB)^{-1}$ from $N\otimes_\ZZ \RR$ to $M\otimes_\ZZ \RR$. For the sublattice  $M^\prime =\ZZ\langle x,y\rangle\hookrightarrow M$,  consider the (not-necessarily-orthogonal) projection $M\to M^\prime$ down the $z$-axis. After extending scalars and composing with $f$, we obtain a map $f^\prime\colon \RR^d\to \RR^2:=M^\prime\otimes_\ZZ \RR$ whose restriction to $\ZZ^d$ fits in to the diagram 
 \begin{equation}
 \label{eqn:2torusdiagram}
  \begin{CD}   
    0@>>> M  @>>> \ZZ^d    @>{\deg}>> \Cl(X)@>>> 0\\
     @.   @VVV            @VV{f^\prime\vert_{\ZZ^d}}V   @.      @.          \\
0 @>>> M^\prime @>>> \RR^2  @>>> \mathbb{T}^2 @>>> 0 
 \end{CD}
 \end{equation}
 where $\mathbb{T}^2:= \RR^2/M^\prime$ is a real two-torus.  To study the image $f^\prime(\widetilde{Q})$, lift a spanning tree from $Q$ to $\RR^d$ to obtain a preferred lift $u_i\in \deg^{-1}(E_i)$ for $i\in Q_0$ as in Definition~\ref{def:coveringquiver}. Set $\mathbf{u}^\prime_i:=f^\prime(u_i)\in \RR^2$ for each $i\in Q_0$, and let $Q^\prime_0\subset \RR^2$ denote the set of all $M^\prime$-translates of such points. Similarly, lift $a\in Q_1$ with tail at $i\in Q_0$ to the unique arrow $\widetilde{a}\in \widetilde{Q}_1$ with tail at $u_i$ and set  $\mathbf{v}^\prime_{a}:=f^\prime(\widetilde{a})$. Note that $\mathbf{v}^\prime_{a}$ is the translation of the vector $f^\prime(\div(a)) = \sum_{\{\rho\in \sigma(1) \mid a\in \supp(\Pi_\rho\}} f^\prime(\chi_\rho)$ to the point $\mathbf{u}^\prime_i$. Write $Q^\prime_1\subset \RR^2$ for the set of all $M^\prime$-translates of such vectors.  
  
\begin{definition}
Let $Q^\prime$ denote the quiver in $\mathbb{T}^2$ defined by the $M^\prime$-periodic quiver in $\RR^2$ with vertex set $Q^\prime_0$ and arrow set $Q^\prime_1$.
\end{definition}

Every vertex from $\widetilde{Q}_0$ is an $M$-translate of some $u_i$ and every arrow from $\widetilde{Q}_1$ is an $M$-translate of an arrow $\widetilde{a}$ with tail at some $u_i$,  so commutativity of \eqref{eqn:2torusdiagram} gives $Q^\prime_0 = f^\prime(\widetilde{Q}_0)$ and $Q^\prime_1 = f^\prime(\widetilde{Q}_1)$. Vertices and arrows in $Q^\prime$ may a priori overlap, so it is not obvious that $Q_0^\prime$ and $Q_1^\prime$ form the 0-skeleton and the 1-skeleton of a cell decomposition of $\mathbb{T}^2$. Nevertheless, the following result confirms that this is indeed the case:

\begin{theorem}
\label{thm:reconstructingdimer}
Every algebraically consistency dimer model $\Gamma$ is homotopy equivalent to the cell decomposition of $\mathbb{T}^2$ dual to that induced by the subquiver $Q^\prime\subset \mathbb{T}^2$.
\end{theorem}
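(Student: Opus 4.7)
The plan is to realise the cell complex carved out by $Q^\prime$ directly in $\mathbb{T}^2$, match its $2$-cells with anticanonical cycles in $Q$, and then appeal to Theorem~\ref{thm:unsignedsuperpotentials} to identify the dual cell complex with $\Gamma$. The key starting observation is that the Gorenstein assumption gives $(1,\dots,1)=\iota(z)$ where $z\in M$ is dual to the height coordinate $\mathbf{e}_3\in N$; since $f^\prime$ factors through the projection $M\twoheadrightarrow M^\prime$ which kills $z$, we have $f^\prime((1,\dots,1))=0$. Consequently any anticanonical cycle $p$ in $Q$, lifted to a path $\widetilde{p}$ in $\widetilde{Q}\subset\RR^d$, has image $f^\prime(\widetilde{p})$ forming a closed polygonal walk in $\RR^2$ that descends to a closed walk in $\mathbb{T}^2$ bounding a candidate $2$-cell.

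Next I would establish that $Q^\prime$ is a bona fide regular cell decomposition of $\mathbb{T}^2$. The bijection between $Q_0$ and $Q^\prime_0$ comes from the construction once one verifies that the preferred lifts $u_i$, $i\in Q_0$, project to distinct classes in $\RR^2/M^\prime$; this reduces via~\eqref{eqn:2torusdiagram} to the statement that $\ZZ^d/\iota(M)\cong \Cl(X)$ injects into the torus $\RR^2/M^\prime$, and the analogous bijection for arrows is then automatic. Using Theorem~\ref{thm:unsignedsuperpotentials} to identify anticanonical cycles with terms $w_F$ of the unsigned dimer superpotential, the Euler characteristic identity $|Q_0|-|Q_1|+|Q_2|=\chi(\mathbb{T}^2)=0$ that already holds for $\Gamma$ pins down the expected number of $2$-cells of $Q^\prime$. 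The main obstacle is embeddedness: the closed walks arising from distinct anticanonical cycles must have pairwise disjoint interiors, and their union must tile $\mathbb{T}^2$. I would attack this locally at each vertex $\mathbf{u}^\prime_i$ by using the label formula~\eqref{eqn:label}: the arrows incident to $\mathbf{u}^\prime_i$ have direction vectors $\sum_{\rho\in\sigma(1),\, a\in\supp(\Pi_\rho)} f^\prime(\chi_\rho)$, which inherit a natural cyclic order from the cyclic order $v_1,\dots,v_d$ on the vertices of $P$. I would then check that this cyclic order matches the cyclic ordering of edges around the face of $\Gamma$ dual to $\mathbf{u}^\prime_i$, invoking algebraic consistency in the form that each F-term relation $\partial_a W_\Gamma = p_a^+ - p_a^-$ has $p_a^\pm$ encoding the two anticanonical cycles on either side of the edge dual to $a$.

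Once $Q^\prime$ is established as a regular cell decomposition of $\mathbb{T}^2$, the dual complex has one $0$-cell per anticanonical cycle, one $1$-cell per arrow of $Q$, and one $2$-cell per vertex of $Q$, with incidences matching those of $\Gamma$ by the local analysis above. A cell-by-cell homeomorphism $\mathbb{T}^2\to\mathbb{T}^2$ that sends each dual cell onto the corresponding cell of $\Gamma$ then yields the required homotopy equivalence; indeed the argument will produce a homeomorphism, but homotopy equivalence is all that is needed.
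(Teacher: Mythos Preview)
Your overall strategy matches the paper's: project anticanonical cycles via $f^\prime$ to closed polygonal curves in $\RR^2$, invoke Theorem~\ref{thm:unsignedsuperpotentials} to put these in bijection with faces $F\in Q_2$, and then argue locally around each vertex that the resulting polygons tile $\mathbb{T}^2$. However, two points need attention.

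First, and most importantly, you are missing the key technical ingredient the paper supplies: \emph{convexity} of the polygon $\Tile(F)$ bounded by each projected anticanonical cycle. The paper establishes this in two steps. It first proves that the vectors $f^\prime(\chi_1),\dots,f^\prime(\chi_d)$ are cyclically ordered in $\RR^2$ (this requires an argument via the dual projection $N\to N^\prime$ and the fact that $\mathbf{e}_z$ lies in the interior of $\sigma$). It then invokes Broomhead's global and local zig-zag fans to show that the labels $x_\rho$ appearing in $x^{\div(a)}$ for each arrow $a$ are \emph{consecutive} in this cyclic order, and moreover that the arrows around any face $F$ have labels that traverse the full cycle once, increasing around black faces and decreasing around white. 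This is what forces the edge vectors $\mathbf{v}^\prime_a$ of the closed curve to be cyclically ordered, hence the curve to bound a convex polygon. Your local cyclic-order argument at a vertex gestures at the first step but does not touch the second; without convexity, nothing prevents the projected curves from self-intersecting or overlapping, and the Euler-characteristic count alone cannot rule this out.

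Second, your proposed injectivity argument for vertices --- that $\Cl(X)$ injects into $\RR^2/M^\prime$ --- is not how the paper proceeds and is not obviously true (the map $f^\prime\vert_{\ZZ^d}$ is a real-linear projection, not a lattice map with controllable kernel). The paper sidesteps this entirely: once convexity is in hand, it glues the polygons $\Tile(F^\pm_\nu)$ cyclically around each vertex to form a tile $\tile(i)$, checks that adjacent tiles glue compatibly along shared polygons, and concludes that the polygons tessellate the plane. Distinctness of vertices then falls out of the tessellation rather than being proved in advance.
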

\begin{proof}
The first step is to show that the images $f^\prime(\chi_1), \dots, f^\prime(\chi_d)$ of the standard basis vectors are cyclically ordered in $\RR^2$.  For this, let $N\to N^\prime$ denote the map dual to the inclusion $M^\prime \hookrightarrow M$. Explicitly, $N^\prime = N/\ZZ\langle \mathbf{e}_z\rangle$ where the vector $\mathbf{e}_z = \sum_{\rho\in \sigma(1)} \iota^*(\chi_\rho)$ is the  image of $z$ under the change of basis $B^tB$ from $M\otimes_\ZZ \RR$ to $N\otimes_\ZZ \RR$. Since $\mathbf{e}_z$ is the sum of the generators of the cone $\sigma$, the vector $\mathbf{e}_z$ lies in the interior of $\sigma$ and hence the cyclic order of the vertices of the slice $P\subset \sigma$ is maintained under the projection to $N^\prime\otimes_\ZZ \RR$. It remains to note that the vectors $f^\prime(\chi_1), \dots, f^\prime(\chi_d)$ are obtained from these cyclically ordered vectors in $N^\prime\otimes_\ZZ \RR$ by the change of basis from $N^\prime\otimes_\ZZ \RR$ to $M^\prime\otimes_\ZZ \RR$. 

We now associate a convex polygon in $M^\prime\otimes_\ZZ \RR$ to every face $F\in Q_2$ in the cell decomposition of $\mathbb{T}^2$ dual to $\Gamma$. Theorem~\ref{thm:unsignedsuperpotentials} implies that each $F\in Q_2$ determines an anticanonical cycle $p_F$ in $Q$. The lift of $p_F$ is an anticanonical path in $\widetilde{Q}$ whose image under $f^\prime$ is a closed piecewise-linear curve in $M^\prime\otimes_\ZZ \RR$ that traverses arrows in $Q^\prime_1$ arising from the arrows $a\in \supp(p_F)$. To see that this curve is the boundary of a convex polygon, consider a pair of complete fans in $M^\prime\otimes_\ZZ \RR$ introduced by Broomhead~\cite[\S4.4-4.5]{Broomhead},  namely, the global zig-zag fan $\Xi$ and the local zig-zag fan $\xi(F)$. The two-dimensional cones $\sigma_\rho$ in $\Xi$ are indexed by extremal perfect matchings $\Pi_\rho$,  the two-dimensional cones $\sigma_a$ in $\xi(F)$ are indexed by arrows $a\in \supp(p_F)$, and $\sigma_\rho\subseteq \sigma_a$ if and only if $a\in \supp(\Pi_\rho)$. In particular, equation \eqref{eqn:label} can be written as
\begin{equation}
\label{eqn:zigzaglabels}
x^{\div(a)} = \prod_{\{\rho\in \sigma(1) \mid \sigma_a\supseteq \sigma_\rho\}} x_\rho.
\end{equation}
Since $\Xi$ is the common refinement of the fans $\xi(F)$ for each $F\in Q_2$, \cite[Remark 4.16]{Broomhead} implies that the cyclic order of the cones $\sigma_\rho$ in $\Xi$ is the same as the cyclic order of the vertices of the polygon $P$. We deduce from \eqref{eqn:zigzaglabels} that labels on a given arrow are consecutive and, furthermore, that the boundary of each face $F$ consists of arrows with consecutive labels; these increase around black faces and decrease around white.  Every arrow in $Q^\prime$ is an $M^\prime$-translate of a vector $\mathbf{v}_a^\prime = \sum_{\{\rho\in \sigma(1) \mid a\in \supp(\Pi_\rho)\}} f^\prime(\chi_\rho)$, and the first step above establishes that the vectors $f^\prime(\chi_1), \dots, f^\prime(\chi_d)$ are cyclically ordered in $\RR^2$, so the set of edges $\{\mathbf{v}^\prime_a \mid a\in\supp(p_F)\}$ of the closed piecewise-linear curve is cyclically ordered,  clockwise for black faces and anticlockwise for white. It follows that this curve is the boundary of a convex polygon $\Tile(F)$ in $M^\prime\otimes_\ZZ \RR$.

It suffices to see that these convex polygons are the 2-cells in a decomposition of $\mathbb{T}^2$ that is homotopy equivalent to that induced by $Q$.  For this, fix $i\in Q_0$ and list cyclically all arrows $a_1,b_1,a_2, b_2, \dots a_k, b_k\in Q_1$ with $\tail(a_\nu)=i$ and $\head(b_\nu)=i$ for $1\leq \nu\leq k$. Let $F^-_\nu\in Q_2$ denote the unique black face containing $a_\nu, b_\nu$ in its boundary, and similarly, $F^+_\nu\in Q_2$ the white face containing $b_\nu, a_{\nu+1}$, with $a_{k+1}:=a_1$. Since the boundaries of $\Tile(F^-_\nu)$ and $\Tile(F^+_\nu)$ are oriented clockwise and anticlockwise respectively, the polygons $\Tile(F^-_1), \Tile(F^+_1), \dots, \Tile(F^-_k), \Tile(F^+_k)$ glue cyclically around vertex $i$. Note that these polygons do not cycle more than once around $i$, because otherwise the edges dual to the outward-pointing arrows $a_1,\dots, a_k$ cycle more than once around the face dual to $i\in Q_0$ which is absurd. In this way, each vertex $i\in Q_0$ gives rise to a \emph{tile} obtained as union of convex polygons $\tile(i):=\bigcup_{1\leq\nu\leq k} (\Tile(F^-_\nu)\cup \Tile(F^+_\nu))$. For each incoming arrow $b_\nu$, the tile $\tile(i)$ glues to $\tile(\tail(b_\nu))$ along $\Tile(F_\nu^-)\cup\Tile(F^+_\nu)$ and, similarly,  for each outgoing arrow $a_\nu$, the tile $\tile(i)$ glues to $\tile(\head(a_\nu))$ along $\Tile(F_\nu^-)\cup\Tile(F^+_{\nu-1})$. It follows that the convex polygons $\Tile(F)$ arising from faces $F\in Q_2$ tesselate the plane and, moreover, the 0-skeleton and 1-skeleton coincide with $Q_0^\prime$ and $Q_1^\prime$ respectively. The assignment $F\mapsto \Tile(F)$ shows that this cell decomposition coincides with that arising from $Q$ up to homotopy.
    \end{proof}
   
 \begin{example}
 \label{exa:F1tiltingSubdivisionT3}
 For the dimer model $\Gamma$ and quiver $Q$ from Example~\ref{exa:dimerF1}, the quivers $\widetilde{Q}(i)$ for $i=0,1$ are drawn in black  in Figure~\ref{fig:F1tiltingSubdivisionT3}, each superimposed on a 4-cube $\mathsf{C}(u_i)$ drawn in grey. 
     \begin{figure}[!ht]
    \centering
      \subfigure[]{
       \psset{unit=0.7cm}
   \begin{pspicture}(0.6,-0.3)(5.5,5.2)
      \cnodeput(3,5.4){E1}{\tiny{0}}\cnodeput(0.6,4.2){D1}{\tiny{3}}
       \cnodeput(0,2.7){C1}{\tiny{2}}\cnodeput(2.4,2.7){C4}{\tiny{2}}
       \cnodeput(3.6,2.7){C3}{\tiny{3}}\cnodeput(6,2.7){C6}{\tiny{3}}
       \cnodeput(0.6,1.2){B1}{\tiny{1}}\cnodeput(3.8,1.2){B3}{\tiny{1}}
       \cnodeput(5.4,1.2){B4}{\tiny{2}}\cnodeput(3,0){A1}{\tiny{0}}
      \psset{linecolor=lightgray}
       \cnodeput(2.2,4.2){D2}{\tiny{}}\cnodeput(3.8,4.2){D3}{\tiny{}}
       \cnodeput(5.4,4.2){D4}{\tiny{}}\cnodeput(1.2,2.7){C2}{\tiny{}}
       \cnodeput(4.8,2.7){C5}{\tiny{}}\cnodeput(2.2,1.2){B2}{\tiny{}}
       \ncline{->}{A1}{B2}\ncline{->}{B1}{C2}\ncline{->}{B2}{C1}\ncline{->}{B2}{C4}
       \ncline{->}{B2}{C5}\ncline{->}{B3}{C2}\ncline{->}{B4}{C5}\ncline{->}{C1}{D2}
       \ncline{->}{C2}{D1}\ncline{->}{C2}{D3}\ncline{->}{C3}{D2}\ncline{->}{C3}{D3}
       \ncline{->}{C4}{D4}\ncline{->}{C5}{D2}\ncline{->}{C5}{D4}\ncline{->}{C6}{D3}
       \ncline{->}{C6}{D4}\ncline{->}{D2}{E1}\ncline{->}{D3}{E1}\ncline{->}{D4}{E1}
       \psset{linecolor=black,linewidth=0.04}
       \ncline{->}{A1}{B1}\lput*{:180}{\tiny{$x_1$}}
       \ncline{->}{A1}{B3}\lput*{:U}{\tiny{$x_3$}}
       \ncline{->}{A1}{B4}\lput*{:U}{\tiny{$x_4$}}
       \ncline{->}{B1}{C1}\lput*{:U}{\tiny{$x_2$}}
       \ncline{->}{B3}{C4}\lput*{:180}{\tiny{$x_2$}}
       \ncline{->}{B3}{C6}\lput*{:U}(0.6){\tiny{$x_4$}}
       \ncline{->}{B4}{C6}\lput*{:U}{\tiny{$x_3$}}
       \ncline{->}{C1}{D1}\lput*{:U}{\tiny{$x_3$}}
       \ncline{->}{C4}{D1}\lput*{:180}{\tiny{$x_1$}}
       \ncline{->}{C6}{E1}\lput*{:180}(0.25){\tiny{$x_1x_2$}}
       \ncline{->}{D1}{E1}\lput*{:U}{\tiny{$x_4$}}
        \ncline{->}{C3}{E1}\lput*{:180}(0.25){\tiny{$x_2x_3$}}
       \ncline{->}{B1}{C3}\lput*{:U}{\tiny{$x_4$}}
       \ncline{->}{B4}{C3}\lput*{:180}(0.6){\tiny{$x_1$}}
   \end{pspicture}      }
      \quad \qquad  \qquad \qquad
      \subfigure[]{    
       \psset{unit=0.7cm}
       \begin{pspicture}(0.6,-0.3)(5.5,5.2)
       \cnodeput(3,5.4){E1}{\tiny{1}}
       \cnodeput(2.2,4.2){D2}{\tiny{0}}
       \cnodeput(5.4,4.2){D4}{\tiny{0}}
       \cnodeput(0,2.7){C1}{\tiny{3}}
       \cnodeput(2.4,2.7){C3}{\tiny{3}}
       \cnodeput(5.4,1.2){B4}{\tiny{3}}
       \cnodeput(2.2,1.2){B2}{\tiny{2}}
       \cnodeput(3,0){A1}{\tiny{1}}
      \psset{linecolor=lightgray}
       \cnodeput(0.6,4.2){D1}{\tiny{}}
       \cnodeput(3.8,4.2){D3}{\tiny{}}
       \cnodeput(1.2,2.7){C2}{\tiny{}}
       \cnodeput(3.6,2.7){C4}{\tiny{}}
       \cnodeput(4.8,2.7){C5}{\tiny{}}
       \cnodeput(6,2.7){C6}{\tiny{}}
       \cnodeput(0.6,1.2){B1}{\tiny{}}
       \cnodeput(3.8,1.2){B3}{\tiny{}}    
       \ncline{->}{A1}{B1}\ncline{->}{A1}{B3}\ncline{->}{B1}{C1}\ncline{->}{B1}{C2}
       \ncline{->}{B1}{C4}\ncline{->}{B2}{C1}\ncline{->}{B2}{C5}\ncline{->}{B3}{C2}
       \ncline{->}{B3}{C3}\ncline{->}{B3}{C5}\ncline{->}{B4}{C5}\ncline{->}{B4}{C3}
       \ncline{->}{B4}{C6}\ncline{->}{C1}{D1}\ncline{->}{C2}{D1}\ncline{->}{C2}{D3}
       \ncline{->}{C3}{D1}\ncline{->}{C4}{D2}\ncline{->}{C4}{D3}\ncline{->}{C5}{D2}
       \ncline{->}{C5}{D4}\ncline{->}{C6}{D3}\ncline{->}{C6}{D4}\ncline{->}{D1}{E1}
       \ncline{->}{D2}{E1}\ncline{->}{D3}{E1}
        \psset{linecolor=black,linewidth=0.04}
       \ncline{->}{A1}{B2}\lput*{:180}{\tiny{$x_2$}}
       \ncline{->}{A1}{B4}\lput*{:U}{\tiny{$x_4$}}
       \ncline{->}{B2}{C1}\lput*{:180}{\tiny{$x_1$}}
       \ncline{->}{B2}{C3}\lput*{:U}{\tiny{$x_3$}} 
       \ncline{->}{C1}{D2}\lput*{:U}{\tiny{$x_4$}}
       \ncline{->}{C3}{D4}\lput*{:U}{\tiny{$x_4$}}
       \ncline{->}{B4}{D4}\lput*{:U}{\tiny{$x_2x_3$}} 
       \ncline{->}{D2}{E1}\lput*{:U}{\tiny{$x_3$}}
       \ncline{->}{D4}{E1}\lput*{:180}{\tiny{$x_1$}}
        \ncline{->}{B4}{D2}\lput*{:180}{\tiny{$x_1x_2$}}
      \end{pspicture}
          }
  \caption{Unit 4-cubes in $\RR^4$ containing the quivers: (a) $\widetilde{Q}(0)$; and (b) $\widetilde{Q}(1)$.}
  \label{fig:F1tiltingSubdivisionT3} 
  \end{figure}  
To construct the quiver $Q^\prime$ as in Theorem~\ref{thm:reconstructingdimer}, note that $\sigma$ is the cone from Example~\ref{exa:F1tilting}, so the map $\iota\colon M\to \ZZ^4$ is defined by the matrix $B$ with columns $\div(x)=(1,0,-1,0)$, $\div(y)=(0,1,1,-1)$ and $\div(z)=(1,1,1,1)$. Orthogonal projection $f^\prime\colon \RR^4\to M^\prime\otimes_\ZZ \RR$ with respect to the standard inner product on $\RR^4$ is defined by the first two rows of $(B^tB)^{-1}B^t$, namely
  \[
 \begin{pmatrix}\frac{5}{9} & \frac{1}{6} & -\frac{4}{9} & -\frac{5}{18} \\ \frac{1}{9} & \frac{1}{3} & \frac{1}{9} & -\frac{5}{9} \end{pmatrix}.
 \]
 A black face $F$ determines the anticanonical path from Figure~\ref{fig:F1tiltingSubdivisionT3}(a) that traverses arrows labelled $x_1, x_2, x_3, x_4$, and this projects via $f^\prime$ to define the anticlockwise boundary of the convex polygon with vertices $\mathbf{u}^\prime_0=(0,0)$, $\mathbf{u}_1^\prime=(\frac{5}{9},\frac{1}{9})$, $\mathbf{u}^\prime_2=(\frac{13}{18},\frac{4}{9})$ and $\mathbf{u}^\prime_3=(\frac{5}{18},\frac{5}{9})$ in the quiver $Q^\prime$ from Figure~\ref{fig:F1SubdivisionT2}(a). A white face determines the adjacent anticanonical path that traverses arrows labelled $x_3, x_2, x_1, x_4$, and this projects to the clockwise boundary of an adjacent polygon in Figure~\ref{fig:F1SubdivisionT2}(a). Each of the six anticanonical paths from Figure~\ref{fig:F1tiltingSubdivisionT3}(a) defines one of the six convex polygons in  Figure~\ref{fig:F1SubdivisionT2}(a) with $\mathbf{u}^\prime_0$ as a vertex, and these polygons define $\tile(0)$. The paths in $\widetilde{Q}(1)$ from Figure~\ref{fig:F1tiltingSubdivisionT3}(b) define the convex polygons that make up $\tile(1)$, and vertices $i=2,3$ are similar. The resulting cell decomposition of $\mathbb{T}^2$ is homotopy equivalent to that from Figure~\ref{fig:dimerF1}(b). 
\end{example}

    \begin{figure}[!ht]
    \centering
  \subfigure[]{
    \psset{unit=1.2cm}
   \begin{pspicture}(0,0)(3,2.9)
 \psset{linecolor=gray}
    \pnode(0,0){A}  \pnode(3,0){B} \pnode(0,3){C}  \pnode(3,3){D} 
   \ncline{-}{A}{B}  \ncline{-}{A}{C}\ncline{-}{C}{D}  \ncline{-}{D}{B} 
   \pnode(1.5,0){K} \pnode(0,1.5){L} \pnode(1.5,3){M} \pnode(3,1.5){N} 
  \psset{linecolor=black}
  \cnodeput(0,0){Q}{0}\cnodeput(1.667,0.333){O}{1}\cnodeput(2.1667,1.333){P}{2}\cnodeput(0.833,1.667){R}{3}
  \cnodeput(3,0){Q1}{0}\cnodeput(0,3){Q2}{0}\cnodeput(3,3){Q3}{0}
  \cnodeput(-1.3333,0.333){N1}{1}\cnodeput(-0.83333,1.333){N2}{2}
   \pnode(3,1.5){S} \pnode(0,1.5){T} \pnode(1.5,0){U} \pnode(1.5,3){V}
    \pnode(3,2.2){W} \pnode(0,2.2){X}\pnode(3,0.8){Y} \pnode(0,0.8){Z}\pnode(0.8,3){ZA} \pnode(0.8,0){ZB} 
  \ncline{->}{Q}{O}\lput*{:350}{\small{$x_1$}}
  \ncline{->}{Q1}{O}\lput*{:195}{\small{$x_3$}}
  \ncline{->}{Q3}{P}\lput*{:115}{\small{$x_4$}}
  \ncline{->}{O}{P}\lput*{:295}{\small{$x_2$}}
  \ncline{->}{P}{R}\lput*{:195}{\small{$x_3$}}
  \ncline{-}{P}{S}\lput*{:350}{\small{$x_1$}}\ncline{->}{T}{R}
  \ncline{-}{O}{U}\ncline{->}{V}{R}\lput*{:115}{\small{$x_4$}}
  \ncline{->}{R}{Q}\lput*{:120}{\small{$x_4$}}
  \ncline{->}{R}{Q3}\lput*{:330}{\small{$x_1x_2$}}
  \ncline{->}{R}{Q2}\lput*{:240}{\small{$x_2x_3$}}
   \ncline{->}{Q}{N1}\lput*{:195}{\small{$x_3$}}
    \ncline{->}{N1}{N2}\lput*{:295}{\small{$x_2$}}
\ncline{-}{N2}{T}\lput*{:350}{\small{$x_1$}}
  \end{pspicture}
      }
      \qquad  \qquad
      \subfigure[]{    \psset{unit=1.2cm}
   \begin{pspicture}(0,0)(3,3)
 \psset{linecolor=gray}
    \pnode(0,0){A}  \pnode(3,0){B} \pnode(0,3){C}  \pnode(3,3){D} 
   \ncline{-}{A}{B}  \ncline{-}{A}{C}\ncline{-}{C}{D}  \ncline{-}{D}{B} 
   \pnode(1.5,0){K} \pnode(0,1.5){L} \pnode(1.5,3){M} \pnode(3,1.5){N} 
  \psset{linecolor=black}
  \cnodeput(0,0){Q}{0}\cnodeput(1.667,0.333){O}{1}\cnodeput(2.1667,1.333){P}{2}\cnodeput(0.833,1.667){R}{3}\cnodeput(1.333,2.667){E}{4}
  \cnodeput(3,0){Q1}{0}\cnodeput(0,3){Q2}{0}\cnodeput(3,3){Q3}{0}
   \pnode(3,1.5){S} \pnode(0,1.5){T} \pnode(1.8,0){U} \pnode(1.8,3){V}\pnode(1.2,3){F}\pnode(1.2,0){G}
    \pnode(3,2.2){W} \pnode(0,2.2){X}\pnode(3,0.8){Y} \pnode(0,0.8){Z}\pnode(0.8,3){ZA} \pnode(0.8,0){ZB} 
  \ncline{->}{Q}{O}\lput*{:350}{\small{$x_1$}}
  \ncline{->}{Q1}{O}\lput*{:195}{\small{$x_3$}}
  \ncline{->}{Q3}{P}\lput*{:115}{\small{$x_4$}}
  \ncline{->}{O}{P}\lput*{:295}{\small{$x_2$}}
  \ncline{->}{P}{R}\lput*{:195}{\small{$x_3$}}
  \ncline{-}{P}{S}\lput*{:350}{\small{$x_1$}}\ncline{->}{T}{R}
  \ncline{-}{O}{U} \nccurve[angleA=270,angleB=30]{->}{V}{R}\lput*{:120}{\small{$x_4$}}
  \ncline{->}{F}{E} \nccurve[angleA=205,angleB=90]{-}{P}{G}\lput*{:120}{\small{$x_4$}}
  \ncline{->}{R}{E}\lput*{:295}{\small{$x_2$}}
  \ncline{->}{R}{Q}\lput*{:120}{\small{$x_4$}}
  \ncline{->}{E}{Q3}\lput*{:350}{\small{$x_1$}}
  \ncline{->}{E}{Q2}\lput*{:190}{\small{$x_3$}}
  \end{pspicture} 
      }
  \caption{(a) Tesselation induced by $Q^\prime_1$ from \ref{exa:F1tiltingSubdivisionT3}; (b) the quiver $f^\prime(\widetilde{Q})$ for \ref{rem:notcelldivision}}
  \label{fig:F1SubdivisionT2}
  \end{figure}

  \begin{remark}
\label{rem:notcelldivision} 
 It is not sufficient for Theorem~\ref{thm:reconstructingdimer} that $Q$ arises from a consistent toric algebra in dimension three. For example, for the collection $\mathscr{E}^{\prime\prime}$ from Example~\ref{exa:superpotentialF1tilting}\three, the vertex labelled 4 in Figure~\ref{fig:F1subandsupertilting}(b) determines $\mathbf{u}_4^\prime=(\frac{4}{9},\frac{8}{9})$ in Figure~\ref{fig:F1SubdivisionT2}(b), and a pair of arrows (drawn as curves) labelled $x_4$ from vertices 1 and 2 cross other arrows at points that are not vertices.  In this case, the toric algebra is consistent but it does not arise from an algebraically consistent dimer model ($A_{\mathscr{E}^{\prime\prime}}$ is not Calabi--Yau) and we do not obtain a subdivision of $\mathbb{T}^2$.  
 \end{remark}
  
 \subsection{The toric cell complex}
Let $f\colon \RR^d\to \RR^3:=M\otimes_\ZZ \RR$ denote the orthogonal projection onto the subspace spanned by $M\subseteq \ZZ^d$ as in the previous subsection.   Then by construction the diagram \eqref{eqn:2torusdiagram} factors through the commutative diagram
 \begin{equation}
 \label{eqn:3torusdiagram}
 \begin{CD}
   0@>>> M  @>>> \ZZ^d    @>{\deg}>> \Cl(X)@>>> 0\\
    @.   @|            @VV{f\vert_{\ZZ^d}}V   @.      @.          \\
0 @>>> M @>>> \RR^3  @>>> \mathbb{T}^3 @>>> 0
 \end{CD}
 \end{equation}
 where $\mathbb{T}^3:= \RR^3/M$ is a real three-torus.  For each $i\in Q_0$, our chosen vertex $u_i\in \widetilde{Q}_0$ determines a point $\mathbf{u}_i:=f(u_i)\in \RR^3$. Set $\mathbf{v}_z := f(\div(z))$, and consider the family of affine planes
 \[
H_\lambda(i):= \left\{\mathbf{u}+\mathbf{u}_i+ \lambda\mathbf{v}_z\in \RR^3=M\otimes_\ZZ \RR \mid \mathbf{u}\in M^\prime\otimes_\ZZ \RR\right\}\quad \text{for }0\leq \lambda\leq 1
 \]
Note that $H_\lambda(i)$ is the translation by $\lambda\mathbf{v}_z$ of the affine plane through $\mathbf{u}_i$ parallel to $M^\prime\otimes_\ZZ \RR$.  These planes slice the image $f(\mathsf{C}(u_i))$ of the unit box  and hence the image $f(\widetilde{Q}(i))$ of the quiver.

\begin{proposition}
\label{prop:3cellpolygons}
For $i\in Q_0$, let $n_i$ be the number of arrows $a\in Q_1$ with $\tail(a)=i$. Either:
\begin{enumerate}
\item[\one] $n_i > 2$, when $f(\widetilde{Q}(i))\cap H_\lambda(i)$ is the vertex set of a polygon $P_\lambda(i)$ for all $0< \lambda < 1$, and it is a singleton $P_\lambda(i)$ for $\lambda =0,1$; or
\item[\two] $n_i=2$, when there exists $\lambda_{\tail}(i)< \lambda_{\head}(i)$ such that $f(\widetilde{Q}(i))\cap H_\lambda(i)$ is the vertex set of
\begin{enumerate}
\item a polygon $P_\lambda(i)$ in $H_\lambda(i)$ for $\lambda_{\tail}(i)< \lambda < \lambda_{\head}(i);$
\item a line segment $P_{\lambda}(i)$ in $H_\lambda(i)$ for $0<\lambda\leq \lambda_{\tail}(i)$ and $\lambda_{\head}(i)\leq \lambda< 1;$
\item a singleton $P_\lambda(i)$ for $\lambda =0,1$.
\end{enumerate}
\end{enumerate}
\end{proposition}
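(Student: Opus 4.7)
The plan is to analyse the cross-sections of $f(\widetilde{Q}(i))$ by means of the further projection $\pi_z\colon M\otimes_\ZZ\RR \to M^\prime\otimes_\ZZ\RR$ along the $z$-direction, using the factorisation $f^\prime = \pi_z\circ f$ and the description of $\tile(i)$ given in Theorem~\ref{thm:reconstructingdimer}.

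The first step is to establish strict $z$-monotonicity along every anticanonical path in $\widetilde{Q}(i)$. By Proposition~\ref{prop:dimerlabels} each arrow $a\in Q_1$ has $\div(a) = \sum_{\rho: a\in\supp(\Pi_\rho)}\chi_\rho$, so it suffices to prove $(f(\chi_\rho))_z>0$ for all $\rho\in\sigma(1)$. A short calculation with $f = (B^tB)^{-1}B^t$ identifies the coefficients $n_\rho := (f(\chi_\rho))_z$ as the unique scalars satisfying $\sum_\rho n_\rho v_\rho = \mathbf{e}_3$ and $\sum_\rho n_\rho = 1$; that is, $(n_\rho)$ is the affine combination expressing $\mathbf{e}_3\in N\otimes_\ZZ\RR$ in terms of the vertices $v_\rho$ of the height-one polygon $P$. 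Choosing coordinates so that $\mathbf{e}_3$ lies in the interior of $P$---possible because $\mathbf{e}_z = \sum_\rho v_\rho$ lies in the interior of $\sigma$---forces every $n_\rho$ to be strictly positive. Consequently the $z$-coordinate of $f(\cdot)$ increases strictly from $0$ to $1$ along any anticanonical path, and each such path meets $H_\lambda(i)$ in a single point for $0<\lambda<1$, reducing to $\mathbf{u}_i$ or $\mathbf{u}_i+\mathbf{v}_z$ at $\lambda = 0, 1$; this already handles the boundary cases in the proposition.

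By Theorem~\ref{thm:unsignedsuperpotentials} the anticanonical cycles through $i$ are in bijection with the $2n_i$ faces $F\in Q_2$ incident to $i$. Writing $\gamma_F\subset f(\widetilde{Q}(i))$ for the corresponding arc from $\mathbf{u}_i$ to $\mathbf{u}_i+\mathbf{v}_z$ and $p_F(\lambda)\in H_\lambda(i)$ for its unique intersection with $H_\lambda(i)$, two arcs $\gamma_F, \gamma_{F^\prime}$ give coincident cross-sections $p_F(\lambda) = p_{F^\prime}(\lambda)$ exactly when the adjacent tiles $\Tile(F), \Tile(F^\prime)$ share the edge at $\mathbf{u}_i^\prime$ whose three-dimensional lift contains that point. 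Since each outgoing arrow at $i$ is shared by exactly two adjacent tiles, and similarly each incoming arrow, the $2n_i$ arcs group pairwise on outgoing edges near $\lambda = 0$ and pairwise on incoming edges near $\lambda = 1$, yielding $n_i$ surviving points in each limiting regime.

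For $n_i>2$ this gives at least $n_i\geq 3$ distinct points $p_F(\lambda)$ at every $\lambda\in(0,1)$, and their cyclic arrangement around $\mathbf{u}_i^\prime$ inherited from the tiling realises them as the vertex set of a simple polygon $P_\lambda(i)$ enclosing the axis $\mathbf{u}_i+\lambda\mathbf{v}_z$. For $n_i=2$ one sets $\lambda_{\tail}(i) := \min_{a\text{ outgoing at }i}(f(\div(a)))_z$ and $\lambda_{\head}(i) := 1-\min_{b\text{ incoming at }i}(f(\div(b)))_z$; algebraic consistency of $\Gamma$ ensures every vertex of $\Gamma$ has degree at least three, so each anticanonical cycle through $i$ has at least three arrows, which forces $\lambda_{\tail}(i)<\lambda_{\head}(i)$. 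For $\lambda\leq\lambda_{\tail}(i)$ the outgoing pairings collapse the four arcs to two points on a line segment; for $\lambda\geq\lambda_{\head}(i)$ the incoming pairings do the same; in between at least three points survive to form a polygon. The chief subtlety is verifying that the cyclic list of cross-section points always closes up to a simple, non-self-intersecting polygon: this uses the convexity of each $\Tile(F)$ from Theorem~\ref{thm:reconstructingdimer} together with the strict $z$-monotonicity established in the first step, so that the cyclic order of tiles around $\mathbf{u}_i^\prime$ lifts consistently to a planar embedding inside $H_\lambda(i)$.
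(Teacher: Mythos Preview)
Your approach mirrors the paper's almost exactly: both argue that each anticanonical path meets every horizontal slice once, then push the slice down via $f' = \pi_z\circ f$ to the tile $\tile(i)$ constructed in Theorem~\ref{thm:reconstructingdimer}, and read off the number and cyclic arrangement of the intersection points from the combinatorics of the convex polygons $\Tile(F)$ glued around $\mathbf{u}_i'$. Your indexing by faces $F\in Q_2$ rather than by pairs $(a,\pm)$ is equivalent via Theorem~\ref{thm:unsignedsuperpotentials}, and your added justification of $\lambda_{\tail}(i)<\lambda_{\head}(i)$ is a detail the paper leaves implicit.

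There is, however, a genuine gap in your monotonicity step. Your computation that the numbers $n_\rho:=(f(\chi_\rho))_z$ satisfy $\sum_\rho n_\rho v_\rho = \mathbf{e}_3$ and $\sum_\rho n_\rho = 1$ is correct (it follows from $B^t w = \mathbf{e}_3$ for $w=B(B^tB)^{-1}\mathbf{e}_3$ and $(1,\dots,1)=B\mathbf{e}_3$), but for $d>3$ these two conditions do \emph{not} determine $(n_\rho)$ uniquely: the solution set is an affine space of dimension $d-3$. Consequently the implication ``$\mathbf{e}_3$ lies in the interior of $P$ $\Rightarrow$ every $n_\rho>0$'' fails, since an interior point of a polygon with more than three vertices admits many affine representations, some with negative coefficients. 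The vector $(n_\rho)$ you actually obtain is the particular solution lying in the column space of $B$, and its positivity is not automatic from the interior condition alone. The clean fix is to choose $M$-coordinates so that $\mathbf{e}_3$ is the \emph{centroid} $\tfrac{1}{d}\sum_\rho v_\rho$; then $(B^tB)\mathbf{e}_3 = d\,\mathbf{e}_3$, whence $n_\rho=\tfrac{1}{d}$ for every $\rho$ and strict monotonicity is immediate. This is in effect what the paper does: it parametrises height by $\tfrac{1}{d}\deg(x^{\div(a)})$, which is manifestly positive on every arrow, rather than by an arbitrary $z$-coordinate.
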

\begin{proof}
For $0\leq \lambda\leq 1$, the image under $f$ of any anticanonical path in $\widetilde{Q}(i)$ touches $H_\lambda(i)$ once because the projection of any such path onto the real line spanned by $\mathbf{v}_z$ is bijective onto its image. We proceed by reconstructing the information of the slice $f(\widetilde{Q}(i))\cap H_\lambda(i)$ in $\RR^3$ using our knowledge of the projection to $\RR^2$ from Theorem~\ref{thm:reconstructingdimer}.

Fix $a\in Q_1$ with $\tail(a)=i$ and write $q_a^\pm$ for the anticanonical paths from $u_i$ covering $p_a^\pm a$ that begin by traversing the unique lift $\widetilde{a}\in \widetilde{Q}_1(i)$ of $a$. Let $\gamma_a^\pm\colon [0,1]\to \RR^3$ denote the curves with images $f(q_a^\pm)$ in $f(\widetilde{Q}(i))$ such that $\gamma_a^\pm(\lambda)\in H_\lambda(i)$ for $0\leq \lambda \leq 1$. Composing with the projection to $\RR^2$ gives piecewise-linear curves $\overline{\gamma}_a^\pm\colon [0,1]\to \RR^2$ satisfying $\overline{\gamma}_a^\pm(0) = \overline{\gamma}_a^\pm(1) = \mathbf{u}^\prime_i$ that traverse the paths $Q^\prime$ corresponding to $p_a^\pm a$. In the notation of the previous proof, list the arrows $a_1,\dots, a_k\in Q_1$ with tail at vertex $i$. Then as $0\leq \lambda \leq 1$ increases, the set of points $\Omega_\lambda(i):=\{\overline{\gamma}_{a_1}^+(\lambda),\overline{\gamma}_{a_1}^-(\lambda) \dots, \overline{\gamma}_{a_k}^+(\lambda), \overline{\gamma}_{a_1}^-(\lambda)\}$ in $\RR^2$ flow from $\mathbf{u}^\prime_i$ out along $\mathbf{v}^\prime_{a_1}, \dots, \mathbf{v}^\prime_{a_k}$ before splitting and returning to $\mathbf{u}_i^\prime$ along the paths in $Q^\prime$ corresponding to $p_{a_1}^+, p_{a_1}^-, \dots, p_{a_k}^+, p_{a_k}^-$.  If $n_i>2$ then $\Omega_\lambda(i)$ comprises three or more points in cyclic order around $\mathbf{u}_i^\prime\in \RR^2$  for $0<\lambda<1$, and hence forms the vertex set of a (nondegenerate) polygon.  For $n_i=2$, let $\mathbf{v}^\prime_{a_1}, \mathbf{v}^\prime_{a_2}\in Q^\prime_1$ denote the arrows with tail at $\mathbf{u}_i^\prime$ and $\mathbf{v}^\prime_{b_1}, \mathbf{v}^\prime_{b_2}\in Q^\prime_1$ the arrows with head at $\mathbf{u}_i^\prime$. Set $\lambda_{\tail}(i):=\frac{1}{d} \min\{\deg(x^{\div(a_1)}),\deg(x^{\div(a_2)})\}$ and $\lambda_{\head}(i):=1-\frac{1}{d} \min\{\deg(x^{\div(b_1)}),\deg(x^{\div(b_2)})\}$. The set $\Omega_\lambda(i)$ comprises two points for $\lambda\leq \lambda_{\tail}(i)$ and $\lambda\geq \lambda_{\head}(i)$,  but otherwise forms the vertex set of a polygon as above. The result follows since translation by $\mathbf{u}_i+ \lambda\mathbf{v}_z$ canonically identifies  $\Omega_\lambda(i)\subset M^\prime \otimes_\ZZ \RR$ with the slice $f(\widetilde{Q})\cap H_\lambda(i)$ for each $0\leq \lambda\leq 1$.
 \end{proof}

 \begin{corollary}
 \label{lem:2-celllinesegments}
 For $a\in Q_1$, set $i:=\tail(a)\in Q_0$ and $\lambda_a:= \frac{1}{d}\deg(x^{\div(a)})\in \QQ$. Let $q_a^\pm$ be the anticanonical paths in $\widetilde{Q}(i)$ that cover the cycles $ap_a^\pm$ in $Q$. The slice $f(q_a^\pm)\cap H_\lambda(i)$ is the vertex set of a line segment $\ell_{\lambda}(a)$ in $H_\lambda(i)$ for $\lambda_a<\lambda < 1$, and it is a singleton $\ell_\lambda(a)$ for $\lambda=\lambda_a, 1$.
   \end{corollary}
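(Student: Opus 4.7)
The plan is to read off the corollary from Proposition~\ref{prop:3cellpolygons} and its proof by specialising the slicing argument to the two anticanonical paths that share the initial arrow $\widetilde{a}$. First I would recall that $q_a^{+}$ and $q_a^{-}$ both issue from $u_i$, traverse the unique lift $\widetilde{a}\in \widetilde{Q}_1(i)$ of $a$, then diverge through the lifts of $p_a^{+}$ and $p_a^{-}$ respectively, and reunite at $u_i+(1,\dots,1)$. By the bijectivity of the projection of an anticanonical path onto the line spanned by $\mathbf{v}_z$ established in the proof of Proposition~\ref{prop:3cellpolygons}, each of $f(q_a^{+})$ and $f(q_a^{-})$ meets $H_\lambda(i)$ in exactly one point for every $\lambda\in [0,1]$. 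Denote these points $P_\lambda^{+}$ and $P_\lambda^{-}$, so that $f(q_a^{\pm})\cap H_\lambda(i)=\{P_\lambda^{\pm}\}$.

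Next I would split the range of $\lambda$ into three parts. For $0\le \lambda\le \lambda_a$ both paths still lie on the common initial arrow $\widetilde{a}$, so $P_\lambda^{+}=P_\lambda^{-}$ is the single point on $f(\widetilde{a})$ at height $\lambda$; in particular at $\lambda=\lambda_a$ both points equal $f(\head(\widetilde{a}))$, giving the singleton $\ell_{\lambda_a}(a)$. At the other endpoint $\lambda=1$, both paths arrive at $u_i+(1,\dots,1)$ and so $P_1^{+}=P_1^{-}=\mathbf{u}_i+\mathbf{v}_z$, giving the singleton $\ell_1(a)$.

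The main step is distinctness in the open range $\lambda_a<\lambda<1$. Here I would pass to the two-dimensional picture from Theorem~\ref{thm:reconstructingdimer}: $f^\prime(q_a^{+})$ and $f^\prime(q_a^{-})$ traverse, respectively, the anticlockwise boundary of the convex polygon $\Tile(F_a^{+})$ and the clockwise boundary of the convex polygon $\Tile(F_a^{-})$, the two tiles adjacent across the common initial edge $\mathbf{v}_a^{\prime}$ dual to $a$. For $\lambda_a<\lambda<1$, the projected points $f^\prime(P_\lambda^{+})$ and $f^\prime(P_\lambda^{-})$ lie in the relative interiors of the two boundary arcs that avoid the shared edge $\mathbf{v}_a^{\prime}$, and hence sit on opposite sides of the line through $\mathbf{v}_a^{\prime}$; in particular they are distinct points of $\RR^2$. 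Since the projection $M\otimes_\ZZ\RR\to M^\prime\otimes_\ZZ\RR$ restricts to a bijection on each slice $H_\lambda(i)$, distinctness in $\RR^2$ lifts to $P_\lambda^{+}\neq P_\lambda^{-}$ in $H_\lambda(i)$, so the pair is the vertex set of a (nondegenerate) line segment $\ell_\lambda(a)\subset H_\lambda(i)$, namely the unique segment joining the two vertices.

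The main obstacle will be justifying that the two projected arcs genuinely stay on opposite sides of $\mathbf{v}_a^{\prime}$ throughout the range $\lambda_a<\lambda<1$, rather than wandering back and potentially meeting. This is where I rely on the tesselation content of Theorem~\ref{thm:reconstructingdimer}, which ensures that the interiors of $\Tile(F_a^{+})$ and $\Tile(F_a^{-})$ are disjoint, together with the dimer-model property that $p_a^{+}$ and $p_a^{-}$ share neither initial nor final arrow, so that neither boundary arc revisits $\mathbf{v}_a^{\prime}$ before reaching the common terminal vertex at $\lambda=1$.
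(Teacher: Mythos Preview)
Your proposal is correct and follows essentially the same approach as the paper. The paper's own proof is a single sentence---``One need only focus attention on a single arrow in the course of the proof above''---deferring entirely to the slicing argument in Proposition~\ref{prop:3cellpolygons}; you have simply unpacked that argument for the pair $q_a^{\pm}$, invoking the convex tiles $\Tile(F_a^{\pm})$ from Theorem~\ref{thm:reconstructingdimer} explicitly where the proof of Proposition~\ref{prop:3cellpolygons} uses the same geometry via the cyclic ordering of the set $\Omega_\lambda(i)$.
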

   \begin{proof}
One need only focus attention on a single arrow in the course of the proof above.
    \end{proof}
   
 Proposition~\ref{prop:3cellpolygons} and its corollary are introduced to facilitate the following key definition. 

\begin{definition}
\label{def:cellcomplex}
 We associate to the collection $\mathscr{E}$ on $X$ a set $\Delta$ of closed subsets of $\mathbb{T}^3$. We describe the construction in each dimension separately (compare Remark~\ref{rem:CWcomplex}): 
 \begin{enumerate}
 \item[$\Delta_0$:] Each $i\in Q_0$ defines a point $\mathbf{u}_i=f(u_i)\in \RR^3$, and we let $\Delta_0$ denote the set of $M$-translates of all such points. 
  \item[$\Delta_1$:] Each $a\in Q_1$ with $\tail(a)=i$ lifts uniquely to an arrow $\widetilde{a}\in \widetilde{Q}_1$ with tail at $u_i$, and each $a\in Q_1$ with $\head(a)=i$ lifts uniquely to an arrow $\widetilde{a}\in \widetilde{Q}_1$ with head at $u_i$. Let $\Delta_1$ denote the set of  $M$-translates of the supports $\supp(\widetilde{a})\subset \RR^3$ of all such arrows.
    \item[$\Delta_2$:] For $a\in Q_1$, set $i:=\tail(a)\in Q_0$ and $\lambda_a:= \frac{1}{d}\deg(x^{\div(a)})\in \QQ$. Define $\eta_a:= \bigcup_{\lambda_a\leq \lambda\leq 1}\ell_{\lambda}(a)$ to be the union of the sets from Lemma~\ref{lem:2-celllinesegments}. Let $\Delta_2$ denote the set of $M$-translates of all such closed subsets $\eta_a$ defined by arrows in $Q$.
   \item[$\Delta_3$:] For $i\in Q_0$, define $\eta_i:= \bigcup_{0\leq \lambda\leq 1} P_{\lambda}(i)$ to be the union of all subsets introduced in Proposition~\ref{prop:3cellpolygons}. Let $\Delta_3$ denote the set of $M$-translates of such closed subsets $\eta_i$ corresponding to all vertices in $Q$.
  \end{enumerate}
Let $\Delta$ denote the set of closed subsets in $\mathbb{T}^3$ determined by these $M$-periodic subsets in $\RR^3$. As before, for $\eta\in \Delta_k$ we write `$\codim(\eta^\prime, \eta)=1$' as shorthand for those $\eta^\prime\in \Delta_{k-1}$ satisfying $\eta^\prime\subset \eta$.
\end{definition}

\begin{example}
\label{exa:F1tiltingGamma}
Returning to Example~\ref{exa:F1tiltingSubdivisionT3}, the elements of $\Delta$ determined by arrows from $\widetilde{Q}(0)$ as shown in Figure~\ref{fig:F1tiltingSubdivisionT3}(a) are the $M$-translates of the faces of the three-dimensional convex polytope obtained as the convex hull of the vertex set $f(\widetilde{Q}(0))$. We work with the projection, so labels on arrows in this figure should now be $f(\div(a))\in \RR^3$ rather than $\div(a)\in \ZZ^4$. After taking into account $M$-periodicity, we see that Figure~\ref{fig:F1tiltingSubdivisionT3}(a) contributes 1 subset to $\Delta_3$, 6 subsets to $\Delta_2$, all 10 subsets to $\Delta_1$ and all 4 subsets to $\Delta_0$. Similarly, Figure~\ref{fig:F1tiltingSubdivisionT3}(b) illustrates $f(\widetilde{Q}(1))$ in $\RR^3$ which contributes 1 subset to $\Delta_3$, 4 to $\Delta_2$, 9 to $\Delta_1$ and all 4 to $\Delta_0$ in $\mathbb{T}^3$. One computes similarly the subsets determined by arrows from $\widetilde{Q}(2)$ and $\widetilde{Q}(3)$ to see that $\Delta$ satisfies $\vert \Delta_0\vert = \vert \Delta_3\vert = 4$ and $\vert \Delta_1\vert = \vert \Delta_2\vert = 10$.
 \end{example}

\begin{remark}
\label{rem:CWcomplex}
Example~\ref{exa:F1tiltingGamma} shows that elements of $\Delta_3$ need not be equidimensional, and subsets from $\Delta_2$ may intersect along their interiors, so $\Delta$ is not a regular cell complex as in Section~\ref{sec:CWcomplex}. Nevertheless, the closed subsets in $\Delta$ arise from a mild variant of the topological notion of the attaching map of a cell. Indeed, for each subset $\eta\in \Delta_k$ there is a (not necessarily surjective) continuous map $\varphi_\eta \colon B^k\to \eta\subset \mathbb{T}^3$  from the closed $k$-ball such that the boundary satisfies 
\[
\varphi_\eta(S^{k-1}) = \bigcup_{\codim(\eta^\prime, \eta)=1} \varphi_\eta(B^k)\cap\eta^{\prime}.
\]
Thus, while our closed subsets in $\mathbb{T}^3$ are not actually cells as defined in Section~\ref{sec:CWcomplex}, they are very similar.  In addition, Lemma~\ref{lem:dimerincidence} below shows that $\Delta$ shares key properties with regular cell complexes. We therefore choose to adopt the terminology `cells' and `cellular' from now on.
\end{remark}

\begin{definition}
The \emph{toric cell complex} of the algebraically consistent dimer model $\Gamma$ or, equivalently, of the collection $\mathscr{E}$ from \eqref{eqn:dimercollection}, is the set $\Delta$ comprising closed subsets of $\mathbb{T}^3$ as in Definition~\ref{def:cellcomplex}. We refer to each element $\eta\in \Delta$ as a \emph{cell} in $\Delta$ (see Remark~\ref{rem:CWcomplex}).
\end{definition}


 \begin{lemma}
 \label{lem:dimerbijection}
There are canonical bijections between $\Delta_0$ and $Q_0$, between $\Delta_1$ and $Q_1$, and between $\Delta_2$ and the set $\{p_a^+ - p_a^- \in \kk Q\mid a\in Q_1\}$ of minimal generators of the ideal $J_\mathscr{E}$. 
\end{lemma}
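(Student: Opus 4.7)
The plan is to verify each of the three bijections in turn. For $\Delta_0 \leftrightarrow Q_0$, the spanning tree chosen before Definition~\ref{def:coveringquiver} determines a preferred lift $u_i\in\deg^{-1}(E_i)$ for each $i\in Q_0$, and these representatives lie in distinct $M$-orbits of $\widetilde{Q}_0\subset \ZZ^d$ because the classes $E_i\in \Cl(X)$ are distinct. Commutativity of \eqref{eqn:3torusdiagram} shows that $f$ descends to a well-defined map sending $i$ to $\mathbf{u}_i = f(u_i)\bmod M$, and distinct $i$ yield distinct points of $\mathbb{T}^3$; this is the first bijection. For $\Delta_1 \leftrightarrow Q_1$, each $a\in Q_1$ lifts uniquely to $\widetilde{a}\in\widetilde{Q}_1$ with $\tail(\widetilde{a})=u_{\tail(a)}$, and the $M$-orbit of $\supp(\widetilde{a})$ projects to a single $1$-cell in $\mathbb{T}^3$. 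Injectivity follows because the ordered endpoint pair $\bigl(\tail(\widetilde{a}),\head(\widetilde{a})\bigr)$ determines $a$ up to $M$-translation.

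The third bijection requires more care. Corollary~\ref{lem:2-celllinesegments} exhibits $\eta_a$ as the region bounded (modulo the $M$-action) by the images of the two paths $p_a^+$ and $p_a^-$, which share both endpoints $\head(a)$ and $\tail(a)$. By construction $a\mapsto \eta_a$ is surjective onto $\Delta_2$, and Theorem~\ref{thm:unsignedsuperpotentials} identifies the minimal generators of $J_\mathscr{E}=J_W$ with $\{p_a^+-p_a^- : a\in Q_1\}$. I would then argue that both maps $Q_1\to \Delta_2$ and $Q_1\to \{p_a^+-p_a^-\}$ are injective by recovering $a$ from the unordered pair $\{p_a^+,p_a^-\}$: the arrow $a$ is the unique edge of $Q$ dual to the common edge of the two oppositely oriented faces of $\Gamma$ whose boundary cycles are $a p_a^+$ and $a p_a^-$. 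Composing produces the desired bijection $\eta_a\mapsto p_a^+-p_a^-$.

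The main obstacle is this final injectivity, because it is not formal but relies essentially on the dimer structure of $\Gamma$. Algebraic consistency, via Theorem~\ref{thm:unsignedsuperpotentials}, tells us that each arrow of $Q$ sits on the boundary of exactly two anticanonical cycles, one from a black face and one from a white face of $\Gamma$; thus the ordered pair of these two cycles—and therefore the pair $(p_a^+,p_a^-)$ obtained by deleting the shared initial arrow $a$—uniquely determines $a$. This rules out any accidental coincidence either among the $2$-cells in $\mathbb{T}^3$ or among the F-term binomials in $\kk Q$, and completes the argument.
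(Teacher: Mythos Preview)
Your argument is more detailed than the paper's, which simply says the proof ``is similar to the proof of Lemma~\ref{lem:McKaybijections}.'' The paper is relying on the fact that Definition~\ref{def:cellcomplex} literally indexes $\Delta_0,\Delta_1,\Delta_2$ by $Q_0,Q_1,Q_1$ respectively, so the bijections are essentially declarative; the boundary description of $\eta_a$ via Corollary~\ref{lem:2-celllinesegments} then matches the relation $p_a^+-p_a^-$ exactly as in the McKay case.

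Your write-up is mostly fine, and your treatment of $\Delta_2$---recovering $a$ as the unique arrow completing both $p_a^{\pm}$ to face cycles of $\Gamma$---is a genuine argument that the paper leaves implicit. However, there is a gap in your handling of $\Delta_0$ (and it propagates to $\Delta_1$). You write that commutativity of \eqref{eqn:3torusdiagram} shows ``distinct $i$ yield distinct points of $\mathbb{T}^3$,'' but commutativity only gives well-definedness of the map $i\mapsto \mathbf{u}_i\bmod M$, not its injectivity. Knowing that the $u_i$ lie in distinct $M$-orbits in $\ZZ^d$ does not, on its own, prevent the orthogonal projection $f$ from sending $u_i-u_j$ into $M$: the kernel of $f$ is $(M\otimes_\ZZ\RR)^\perp$, which may meet $\ZZ^d$ nontrivially, so distinct cosets in $\ZZ^d/M$ could in principle collapse in $\RR^3/M$.

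The clean fix is to invoke Theorem~\ref{thm:reconstructingdimer}: the further projection $f'$ to $\mathbb{T}^2$ yields the quiver $Q'$, and the theorem shows that $Q'_0$ and $Q'_1$ form the $0$- and $1$-skeleta of a genuine cell decomposition of $\mathbb{T}^2$ homotopy equivalent to the dual of $\Gamma$. In particular the $\mathbf{u}'_i$ are pairwise distinct in $\mathbb{T}^2$, and since the projection $\mathbb{T}^3\to\mathbb{T}^2$ is well defined this forces the $\mathbf{u}_i$ to be pairwise distinct in $\mathbb{T}^3$. The same theorem handles the $1$-cells and the $2$-cells simultaneously, since distinct arrows and distinct faces of the dual decomposition already separate in $\mathbb{T}^2$. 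With that one citation inserted, your proof is complete and in fact more carefully justified than the paper's.
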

\begin{proof}
This is similar to the proof of Lemma~\ref{lem:McKaybijections}.
\end{proof}

For $k\leq 2$ and $\eta\in \Delta_k$, the \emph{head}, \emph{tail} and \emph{label} of $\eta$ are defined to be the head, tail and label respectively of the element of $\kk Q$ that is associated to $\eta$ by Lemma~\ref{lem:dimerbijection}. Each $\eta\in \Delta_3$ is constructed from some quiver $\widetilde{Q}(i)$, and we define both the \emph{head} and \emph{tail} of $\eta$ to be the vertex $i\in Q_0$, while the \emph{label} of $\eta$ is $(1,\dots,1)\in \ZZ^n$. The notions of right- and left-differentiation of cells with respect to faces are defined precisely as in Section~\ref{sec:McKay}. Indeed, for $\eta\in \Delta$ and for any face $\eta^\prime\subset \eta$ there is a (not necessarily unique) path in $\widetilde{Q}$ from $\head(\eta^\prime)$ to $\head(\eta)$ that defines (uniquely) an element $\overleftarrow{\partial}_{\!\eta'}\eta\in A$. Similarly, any path in $\widetilde{Q}$ from $\tail(\eta)$ to $\tail(\eta^\prime)$ defines uniquely an element $\overrightarrow{\partial}_{\!\eta'}\eta\in A$.

\begin{definition}
\label{def:leftrightderivativesdimer}
For $\eta\in \Delta$ and any face $\eta^\prime\subset \eta$, the element $\overleftarrow{\partial}_{\!\eta'}\eta\in A$ is the \emph{left-derivative} of $\eta$ with respect to $\eta'$. Similarly, $\overrightarrow{\partial}_{\!\eta'}\eta\in A$ is the \emph{right-derivative} of $\eta$ with respect to $\eta'$.
\end{definition}

Again, the cells of $\Delta$ satisfy a duality property (compare Proposition~\ref{prop:dualityMcKay}):

\begin{proposition}
The map $\tau\colon \Delta\to \Delta$ that assigns to each $\eta\in \Delta_k$ the unique cell $\eta^\prime\in \Delta_{3-k}$ with $\tail(\eta^\prime)=\head(\eta)$, $\head(\eta^\prime)=\tail(\eta)$ and $x^{\div(\eta^\prime)} = \prod_{\rho\in \sigma(1)}x_\rho/x^{\div(\eta)}$ is an involution.
\end{proposition}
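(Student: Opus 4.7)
The plan is to exploit the canonical parametrisations of cells in each dimension to reduce the statement to an elementary combinatorial matching. By Lemma~\ref{lem:dimerbijection} together with the construction of $\Delta_3$ in Definition~\ref{def:cellcomplex}, there are canonical bijections $\Delta_0\cong Q_0\cong \Delta_3$ and $\Delta_1\cong Q_1\cong \Delta_2$, where the 2-cell indexed by $a\in Q_1$ is the cell $\eta_a$ associated to the generating relation $p_a^+-p_a^-\in J_\mathscr{E}$, and the 3-cell indexed by $i\in Q_0$ is $\eta_i$. The idea is to define $\tau$ using these bijections and then verify the three numerical conditions by inspection.

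First I would tabulate head, tail and label for every cell. A 0-cell corresponding to $i\in Q_0$ has $\head=\tail=i$ and labelling monomial $1$, while the 3-cell $\eta_i$ satisfies $\head(\eta_i)=\tail(\eta_i)=i$ and $x^{\div(\eta_i)}=\prod_{\rho\in\sigma(1)} x_\rho$ directly from Definition~\ref{def:cellcomplex} (the anticanonical paths used to build $\eta_i$ run from $u_i$ to $u_i+(1,\dots,1)$). A 1-cell indexed by $a\in Q_1$ has head, tail and label given by those of $a$. A 2-cell $\eta_a$ corresponds to the relation $p_a^+-p_a^-$, and since each $p_a^\pm$ is a path from $\head(a)$ to $\tail(a)$ with labelling monomial $\prod_{\rho\in\sigma(1)} x_\rho/x^{\div(a)}$, the 2-cell $\eta_a$ inherits precisely these data.

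With these computations in hand, define $\tau$ as follows: on $\Delta_0\leftrightarrow\Delta_3$, pair the 0-cell labelled by $i$ with the 3-cell $\eta_i$; on $\Delta_1\leftrightarrow\Delta_2$, pair the 1-cell indexed by $a$ with the 2-cell $\eta_a$. Each such pair satisfies $\tail(\eta^\prime)=\head(\eta)$, $\head(\eta^\prime)=\tail(\eta)$ and $x^{\div(\eta^\prime)}=\prod_{\rho\in\sigma(1)} x_\rho/x^{\div(\eta)}$, and the pairings are manifestly involutive. To finish, I would argue that no other cell of the opposite dimension satisfies the three conditions simultaneously: since in a quiver of sections two irreducible $T_M$-invariant sections in $\Hom_R(E_i,E_j)$ with identical divisor differ only by a nonzero scalar and hence label the same arrow, the triple $(\head,\tail,\div)$ determines a 1-cell uniquely, and this uniqueness transfers to 2-cells through the bijection $\eta_a\leftrightarrow a$; for 0-cells and 3-cells, the bijection with $Q_0$ makes uniqueness immediate since each is completely specified by its common head/tail vertex.

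The mildly subtle point, which I expect to be the only real obstacle, is to confirm that the construction of the 2-cells $\eta_a$ and 3-cells $\eta_i$ in Definition~\ref{def:cellcomplex} genuinely yields distinct cells for distinct arrows and vertices in $Q$, so that the bijections above do not collapse in $\mathbb{T}^3$. This relies on the choice of preferred lifts $u_i\in\widetilde{Q}_0$ made in Definition~\ref{def:coveringquiver} via a spanning tree, which situates the building pieces of $\Delta$ inside a single fundamental domain for the $M$-action on $\RR^3$; combined with the $M$-periodicity of the construction, this guarantees injective descent to $\mathbb{T}^3$ and legitimises the head/tail/label bookkeeping performed above.
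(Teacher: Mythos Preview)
Your proposal is correct and takes essentially the same approach as the paper, which simply states that the result ``is evident from the construction.'' You have carefully unpacked what that construction entails, verifying the head/tail/label data in each dimension and checking uniqueness via the bijections of Lemma~\ref{lem:dimerbijection}; this is exactly the content the paper leaves implicit.
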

\begin{proof}
This is evident from the construction.
\end{proof}

  \subsection{The cellular resolution}
The minimal projective resolution of a dimer model algebra $A$ as an $(A,A)$-bimodule has been studied extensively in the literature \cite{Broomhead, Davison,MozgovoyReineke} under various assumptions on the dimer model. Here we assume algebraic consistency and describe in a uniform way the maps in the resolution using the toric cell complex $\Delta$. As a first step we show that $\Delta$ shares some key properties with regular cell complexes.

\begin{lemma}
\label{lem:dimerincidence}
The toric cell complex $\Delta$ of an algebaically consistent dimer model satisfies \eqref{eqn:facetsproperty}. In addition, $\Delta$ admits an incidence function, that is, a function $\varepsilon\colon \Delta\times \Delta \to \{0,\pm 1\}$ such that:
\begin{enumerate}
\item[\one] $\varepsilon(\eta,\eta^\prime)= 0$ unless $\eta^\prime$ is a facet of $\eta$;
\item[\two] $\varepsilon(\eta,\emptyset) = 1$ for all 0-cells $\eta$; and
\item[\three] if $\eta\in \Delta_k$ and $\eta^{\prime\prime}\in \Delta_{k-2}$ is a face of $\eta$, then for $\eta_1^\prime, \eta_2^\prime\in \Delta_{k-1}$ from \eqref{eqn:facetsproperty} we have 
\begin{equation}
\label{eqn:signconditiondimer}
\varepsilon(\eta,\eta_1^\prime) \varepsilon(\eta_1^\prime,\eta^{\prime\prime}) +\varepsilon(\eta,\eta_2^\prime) \varepsilon(\eta_2^\prime,\eta^{\prime\prime})= 0.
\end{equation}
\end{enumerate}
\end{lemma}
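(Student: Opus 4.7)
The plan is to verify condition \eqref{eqn:facetsproperty} by direct inspection for each pair $(k,k-2)\in\{(1,-1),(2,0),(3,1)\}$, and then to construct the incidence function $\varepsilon$ explicitly using coherent orientations on cells, rather than invoking the general construction of \cite[Theorem~6.2.2]{BrunsHerzog} which assumes regularity of the cell complex and so does not apply here (cf.\ Remark~\ref{rem:CWcomplex}).

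For property~\eqref{eqn:facetsproperty}, the case $k=1$ is trivial: each 1-cell is dual to an arrow $a\in Q_1$ by Lemma~\ref{lem:dimerbijection} and has exactly the two 0-cells $\head(a),\tail(a)$ as facets containing the empty face. For $k=2$, Lemma~\ref{lem:dimerbijection} presents each 2-cell as a digon $\eta_a$ whose boundary traces the two paths $p_a^+$ and $p_a^-$ from $\head(a)$ to $\tail(a)$. A 0-cell $\eta''\subset\eta_a$ is either an interior vertex of exactly one of these paths -- in which case its two 1-cell facets are the unique incoming and outgoing arrows of that path at $\eta''$ -- or else equals the tail $\head(a)$ or the head $\tail(a)$ of the digon, in which case the two facets are the initial (resp.\ final) arrows of $p_a^+$ and $p_a^-$. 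For $k=3$, the 2-cell facets of $\eta_i$ correspond via Theorem~\ref{thm:reconstructingdimer} to the convex polygons $\Tile(F)$ comprising the tile $\tile(i)$, so a 1-cell facet of $\eta_i$ corresponds to an edge of this tesselation and is shared between exactly two adjacent polygons $\Tile(F^+)$ and $\Tile(F^-)$, yielding the required pair of 2-cell facets.

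Once \eqref{eqn:facetsproperty} is in hand, I construct $\varepsilon$ as follows. Set $\varepsilon(v,\emptyset)=1$ for every 0-cell $v$. For each 1-cell $a$, orient from $\tail(a)$ to $\head(a)$ and set $\varepsilon(a,\head(a))=1$, $\varepsilon(a,\tail(a))=-1$. For each 2-cell $\eta_a$, orient $\partial\eta_a$ by traversing $p_a^+$ forwards and $p_a^-$ backwards, and define $\varepsilon(\eta_a,b)=\pm 1$ according to whether the orientation of the 1-cell facet $b$ agrees with or opposes this induced boundary orientation. For each 3-cell $\eta_i$, take the outward orientation with respect to the embedding in $\RR^3$ and define $\varepsilon(\eta_i,\eta_b)=\pm 1$ by comparison with the chosen orientation on $\eta_b$; concretely, the bipartite colouring of the dimer assigns opposite signs to adjacent 2-cell facets of $\eta_i$, mirroring the sign pattern $\pm w_F$ in $W_\Gamma$. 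Condition \eqref{eqn:signconditiondimer} in bicodimension $(2,0)$ then reduces to the cancellation between consecutive arrows along each path $p_a^\pm$, while in bicodimension $(3,1)$ it reduces to the fact that a 1-cell lying between two adjacent 2-cell facets of $\eta_i$ receives opposite contributions from them, since the corresponding dimer faces have opposite colour.

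The hard part will be the $k=3$ case of \eqref{eqn:facetsproperty}: by Proposition~\ref{prop:3cellpolygons}\two\ the 3-cells $\eta_i$ need not be convex polytopes, and their facet structure is not described by standard polytope combinatorics. The way around this is to transport boundary incidences on $\eta_i$ into the two-dimensional combinatorial data of the tile $\tile(i)$ via Theorem~\ref{thm:reconstructingdimer}, where the dimer structure controls matters; one must then verify that the resulting sign function descends coherently from the cover in $\RR^3$ to the quotient torus $\mathbb{T}^3$, avoiding any global obstruction arising from the non-CW nature of $\Delta$.
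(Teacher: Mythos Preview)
Your treatment of the cases $k=1,2$ of \eqref{eqn:facetsproperty} and the incidence function on $\Delta_0,\Delta_1,\Delta_2$ is fine and matches the paper. The gap is in your handling of the $3$-cells.

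Your claim that the $2$-cell facets of the $3$-cell $\eta_i$ correspond via Theorem~\ref{thm:reconstructingdimer} to the convex polygons $\Tile(F)$ for dimer faces $F$ adjacent to $i$ is a misidentification. By Lemma~\ref{lem:dimerbijection} the $2$-cells of $\Delta$ are indexed by \emph{arrows} $a\in Q_1$ (equivalently, F-term relations $p_a^+-p_a^-$), not by dimer faces $F\in Q_2$. Geometrically, the $2$-cell $\eta_a$ from Definition~\ref{def:cellcomplex} is a digon swept out between the two anticanonical paths covering $ap_a^+$ and $ap_a^-$, whereas each $\Tile(F)$ is bounded by a single anticanonical cycle $p_F$. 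Although the cardinalities happen to agree (the arrows incident to $i$ and the faces at $i$ are equinumerous in a dimer), the incidence data do not: a $1$-cell facet of $\eta_i$ is an arrow $b$ appearing in some anticanonical cycle through $i$, and the two $2$-cells containing it are determined by arrows, not by the pair of dimer faces on either side of $b$. Your subsequent sign argument via bipartite colouring of adjacent $\Tile(F^\pm)$ therefore does not apply to the actual facet pairs in \eqref{eqn:facetsproperty}.

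The paper's approach for $k=3$ is instead an algebraic case split on whether the $1$-cell $\eta''$ (supported on an arrow $b$) shares head, tail, or neither with the $3$-cell $\eta$. In the first two cases $b$ lies in exactly two anticanonical cycles through $i$, and the intermediate $2$-cells are those dual to the final (resp.\ initial) arrows of these cycles; in the third case $b$ lies in a unique such cycle, and the two $2$-cells are dual to its initial and final arrows. The incidence function on $(\eta_i,\eta')$ is then defined purely by whether $\head(\eta')=i$ or $\tail(\eta')=i$, with \eqref{eqn:signconditiondimer} verified case by case using the signs in $W_\Gamma$. This avoids any appeal to geometric orientations and the attendant descent issue you flag at the end.
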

\begin{proof}
Statement \eqref{eqn:facetsproperty} is immediate for $k=2$. As for $k=3$, fix a codimension-two face $\eta^{\prime\prime}\in \Delta_1$ of a 3-cell $\eta\in \Delta_3$. Consider three cases: either the tails coincide $\tail(\eta)=\tail(\eta^{\prime\prime})$; the heads coincide $\head(\eta)=\head(\eta^{\prime\prime})$; or neither heads nor tails coincide. In the first two cases the arrow $a\in Q_1$ whose support is $\eta^{\prime\prime}$ is contained in precisely two anticanonical cycles, say $p_1, p_2$, each of which traverses only arrows supported on 1-cells in $\eta$. In the first case, if $a_j\in \supp(p_j)$ denotes the arrow with head at $\head(\eta)$ for $j=1,2$, then the 2-cells $\eta_1^\prime, \eta_2^\prime\subset \eta$ dual to the arrows $a_1, a_2$ are the unique 2-cells in $\eta$ containing $\eta^{\prime\prime}$. In the second case, if $a_j\in \supp(p_j)$ denotes the arrow with tail $\tail(\eta)$ for $j=1,2$, then the cells $\eta_1^\prime, \eta_2^\prime\subset \eta$ dual to $a_1, a_2$ are the unique 2-cells in $\eta$ containing $\eta^{\prime\prime}$. In the third case,  the arrow $a\in Q_1$ whose support is $\eta^{\prime\prime}$ is contained in a unique anticanonical cycle $p$ that traverses only arrows supported on 1-cells in $\eta$. If we let $a_1, a_2\in \supp(p)$ denote the unique arrows satisfying $\head(a_1)=\head(p)$ and $\tail(a_2)=\tail(p)$, then the 2-cells $\eta_1^\prime, \eta_2^\prime\subset \eta$ dual to the arrows $a_1, a_2$ are the unique 2-cells in $\eta$ containing $\eta^{\prime\prime}$. This establishes \eqref{eqn:facetsproperty}.

To construct an incidence function we may assume properties \one\ and \two. Each cell $\eta\in \Delta_1$ is supported on a unique arrow $a\in Q_1$ and contains precisely two 0-cells $\head(a), \tail(a)\in \Delta_0$. Choosing $\varepsilon(\eta, \head(a)) = 1$ forces $\varepsilon(\eta, \tail(a)) = -1$ by \eqref{eqn:signconditiondimer}. Similarly, every 2-cell $\eta\in \Delta_2$ corresponds uniquely to a minimal generator of $J_\mathscr{E}$ and we use the signs from $W_\Gamma$ (see Remark~\ref{rem:signsW_G} below) to write this generator as $p_a^+-p_a^- = a_l^+\cdots a_1^+ - a_m^-\cdots a_1^-$ where the boundary of $\eta$ is supported on the arrows $\{a_1^+, \dots ,a_l^+, a_1^-, \dots a_m^-\}$. Identify each 1-cell with the corresponding arrow   and choose $\varepsilon(\eta,a_j^+)=1$ for $1\leq j\leq l$, in which case \eqref{eqn:signconditiondimer} forces $\varepsilon(\eta, a_j^-) = -1$ for $1\le j\leq m$. Finally, each 3-cell $\eta\in \Delta_3$ is such that every facet $\eta^\prime\subset \eta$ satisfies either $\head(\eta^\prime)=\head(\eta)$ or $\tail(\eta^\prime)=\tail(\eta)$. Choose $\varepsilon(\eta, \eta^\prime) = 1$ if $\head(\eta^\prime)=\head(\eta)$ in which case \eqref{eqn:signconditiondimer} forces $\varepsilon(\eta, \eta^\prime) = -1$ for $\tail(\eta^\prime)=\tail(\eta)$.

It remains to show that equation \eqref{eqn:signconditiondimer} holds for any $\eta\in \Delta_k$ and codimension-two face $\eta^{\prime\prime}\subset \eta$. There are three cases, where either the tails coincide $\tail(\eta)=\tail(\eta^{\prime\prime})$, the heads coincide $\head(\eta)=\head(\eta^{\prime\prime})$, or neither heads nor tails coincide. For $k=2$, the proof in each case is straightforward because $\eta^{\prime\prime}$ is a 0-cell. For $k=3$, consider the case $\tail(\eta)=\tail(\eta^{\prime\prime})$ where $\varepsilon(\eta,\eta_1^\prime)=\varepsilon(\eta,\eta^\prime_2)=-1$. The cell $\eta^{\prime\prime}\in \Delta_1$ is supported on an arrow $a\in Q_1$, and the signs $\varepsilon(\eta^\prime_1,\eta^{\prime\prime}), \varepsilon(\eta^\prime_2,\eta^{\prime\prime})$ differ as required because the pair of anticanonical paths that traverse arrow $a$ have opposite signs in $W_\Gamma$. The case with $\head(\eta)=\head(\eta^{\prime\prime})$ is similar. In the final case, the arrow corresponding to $\eta^{\prime\prime}$ lies in a unique anticanonical path and hence $\varepsilon(\eta_1^\prime,\eta^{\prime\prime}) = \varepsilon(\eta_2^\prime,\eta^{\prime\prime})$, but then one of $\eta_1^\prime, \eta_2^\prime$ shares head with $\eta$ while the other shares tail. Thus, the signs $\varepsilon(\eta, \eta_1^{\prime}), \varepsilon(\eta,\eta_2^{\prime})$ differ as required.
\end{proof}

Our main result below uses the toric cell complex to provide a simple, uniform description of all terms and differentials in the standard resolution associated to a quiver with superpotential in dimension three as studied by Ginzburg~\cite{Ginzburg}, Mozgovoy-Reineke~\cite{MozgovoyReineke}, Davison~\cite{Davison} and Broomhead~\cite{Broomhead}.

  \begin{theorem}
  \label{thm:dimerresolution}
 Let $\Gamma$ be an algebraically consistent dimer model with toric algebra $A$ and let $\varepsilon\colon \Delta\times \Delta \to \{0,\pm 1\}$ be any incidence function on $\Delta$. The minimal bimodule resolution of $A$ is the \emph{cellular resolution}
\[
 0\longrightarrow  P_3  \stackrel{d_3}{\longrightarrow} P_2  \stackrel{d_2}{\longrightarrow} P_1  \stackrel{d_1}{\longrightarrow} P_0 \stackrel{\mu}{\longrightarrow} A \longrightarrow 0
\]
 where for $0\leq k\leq 3$ we have
\[
P_k:=   \bigoplus_{\eta\in \Delta_k} A e_{\head(\eta)} \otimes [\eta] \otimes e_{\tail(\eta)} A,
 \]
where $\mu\colon P_0\to A$ is the multiplication map and  where $ d_k\colon P_k\longrightarrow P_{k-1}$ satisfies
\[
d_k\big(1\otimes[\eta]\otimes 1\big) = \sum_{\codim(\eta^\prime,\eta)=1} \varepsilon(\eta,\eta^\prime) \overleftarrow{\partial}_{\!\!\eta^\prime}\eta\otimes [\eta^\prime]\otimes \overrightarrow{\partial}_{\!\!\eta^\prime}\eta.
\]
 \end{theorem}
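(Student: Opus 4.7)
The plan is to follow the blueprint of Theorem~\ref{thm:McKayresolution}, adapting it to the cellular structure $\Delta$ coming from the dimer model. First, verification that the putative differentials form a complex, i.e.\ $d_{k-1}\circ d_k=0$, and that alternative incidence functions yield naturally isomorphic complexes, both proceed essentially verbatim as in Proposition~\ref{prop:McKaycomplex}, since Lemma~\ref{lem:dimerincidence} supplies both the codimension-two property \eqref{eqn:facetsproperty} and an incidence function satisfying \eqref{eqn:signconditiondimer}. Minimality will be automatic once exactness is established, because by Proposition~\ref{prop:div(a)} every arrow $a\in Q_1$ has nonzero label, so each left or right derivative $\overleftarrow{\partial}_{\eta'}\eta$ and $\overrightarrow{\partial}_{\eta'}\eta$ appearing in $d_k$ with $\eta'\subsetneq\eta$ is a nontrivial path in $Q$, and hence lies in the ideal of $A$ generated by arrows.

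The key step is to select one explicit incidence function $\varepsilon$ and to check that our complex then coincides, term-by-term and differential-by-differential, with the well known standard $(A,A)$-bimodule resolution of a Jacobian algebra in dimension three from Ginzburg~\cite{Ginzburg}, Mozgovoy--Reineke~\cite{MozgovoyReineke}, Davison~\cite{Davison} and Broomhead~\cite{Broomhead}. Following the recipe in the proof of Lemma~\ref{lem:dimerincidence}, I would take $\varepsilon(\eta,\head(a))=+1$ and $\varepsilon(\eta,\tail(a))=-1$ on each 1-cell $\eta$ with supporting arrow $a$; for each 2-cell $\eta$ corresponding to the F-term relation $p_a^+-p_a^-$, I would set $\varepsilon(\eta,\eta')=+1$ when $\eta'$ is supported on an arrow of $p_a^+$ and $\varepsilon(\eta,\eta')=-1$ when $\eta'$ is supported on an arrow of $p_a^-$; and for each 3-cell $\eta_i$ attached to $i\in Q_0$, I would set $\varepsilon(\eta_i,\eta')=+1$ when $\head(\eta')=i$ and $\varepsilon(\eta_i,\eta')=-1$ when $\tail(\eta')=i$. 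Lemma~\ref{lem:dimerbijection} together with Theorem~\ref{thm:unsignedsuperpotentials} ensure that these specifications are consistent.

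Unwinding $d_k(1\otimes [\eta]\otimes 1)$ with this choice in each degree should then yield: in degree one, the standard commutator-type map $1\otimes[\head(a)]\otimes a-a\otimes[\tail(a)]\otimes 1$; in degree two, for the 2-cell $\eta_a$ associated with $a\in Q_1$, the trace of the F-term relation $p_a^+-p_a^-$ obtained by reading off subpaths of $p_a^\pm=a_l^\pm\cdots a_1^\pm$, that is, $\sum_{j}(\pm 1)\,a_l^\pm\cdots a_{j+1}^\pm\otimes[a_j^\pm]\otimes a_{j-1}^\pm\cdots a_1^\pm$, which is the classical description of the second differential in the Ginzburg resolution; and in degree three, after identifying facets of $\eta_i$ with arrows incident to $i$ via the duality involution $\tau$ of the proposition following Definition~\ref{def:cellcomplex}, the Koszul-dual combination $\sum_{\head(a)=i}1\otimes[\tau(a)]\otimes a-\sum_{\tail(a)=i}a\otimes[\tau(a)]\otimes 1$ appearing in the standard resolution.

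The hard part will be the third-degree verification. One must carefully read off the facet structure of the 3-cell $\eta_i$ from Proposition~\ref{prop:3cellpolygons} and Corollary~\ref{lem:2-celllinesegments}, and check that for every 2-cell facet $\eta'\subset\eta_i$ the derivatives $\overleftarrow{\partial}_{\eta'}\eta_i$ and $\overrightarrow{\partial}_{\eta'}\eta_i$ reduce to either the trivial path at $i$ or to a single arrow, according to whether $\eta'$ shares head or tail with $\eta_i$. This is precisely where the geometry of the toric cell complex from Definition~\ref{def:cellcomplex}, and in particular the slicing construction underlying the 3-cells, must be combined with the combinatorics of anticanonical cycles through $i$ given by Theorem~\ref{thm:unsignedsuperpotentials}. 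Once the term-by-term identification with the standard resolution is complete, exactness, and hence the minimality already noted, is immediate from the established results of the cited authors, completing the proof.
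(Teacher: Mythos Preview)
Your approach is essentially identical to the paper's: fix the specific incidence function constructed in Lemma~\ref{lem:dimerincidence}, compute $d_1,d_2,d_3$ explicitly, and observe that the resulting complex coincides on the nose with the standard bimodule resolution of Broomhead~\cite{Broomhead} (hence is exact and minimal); then invoke the argument of Proposition~\ref{prop:McKaycomplex} to see that any other incidence function yields an isomorphic complex.

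One small slip: your minimality argument claims that for $\eta'\subsetneq\eta$ \emph{each} of $\overleftarrow{\partial}_{\eta'}\eta$ and $\overrightarrow{\partial}_{\eta'}\eta$ is a nontrivial path, but this is false --- for instance in $d_3$ the facets $\eta'\subset\eta_i$ with $\head(\eta')=i$ have $\overleftarrow{\partial}_{\eta'}\eta_i=e_i$. What is true (and what you need) is that for $\eta'\subsetneq\eta$ the labels differ, so $\div(\overleftarrow{\partial}_{\eta'}\eta)+\div(\overrightarrow{\partial}_{\eta'}\eta)=\div(\eta)-\div(\eta')\neq 0$, hence at least one of the two derivatives is nontrivial and each summand lies in the arrow ideal. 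In any case this separate argument is unnecessary once you have identified the complex with Broomhead's, since minimality is part of what is established there.
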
 
\begin{proof}
This result can be proved directly by modifying the proofs from \cite{Broomhead, Davison,MozgovoyReineke}. However, we choose instead to consider a particular incidence function that realises precisely the maps from the resolution of Broomhead~\cite{Broomhead}. Then, just as in the proof of Proposition~\ref{prop:McKaycomplex}, choosing any alternative incidence function merely provides an isomorphic resolution of $A$.

Let $\varepsilon\colon \Delta\times \Delta \to \{0,\pm 1\}$  denote the incidence function constructed in the proof of Lemma~\ref{lem:dimerincidence}.  To compute the differentials for this choice of incidence function, note first that
\[
d_1\big(1\otimes[a]\otimes 1\big) =  1\otimes[\head(a)]\otimes a - a\otimes [\tail(a)]\otimes 1.
\]
For the 2-cell $\eta\in \Delta$ corresponding to the relation $p_a^+-p_a^- = a_l^+\cdots a_1^+ - a_m^-\cdots a_1^-$, we have
\[
d_2\big(1\otimes[\eta]\otimes 1\big) = \sum_{j=1}^l   a_l^+\cdots a_{j+1}^+\otimes [a_j^+]\otimes a_{j-1}^+\cdots a_1^+  -  \sum_{j=1}^m  a_m^-\cdots a_{j+1}^-\otimes [a_j^-]\otimes a_{j-1}^-\cdots a_1^- .
\]
Finally, consider $\eta\in \Delta_3$ with $i:= \tail(\eta)=\head(\eta)\in Q_0$. The facets $\eta^\prime\subset \eta$ that satisfy $\head(\eta^\prime)=\head(\eta)$ have left-derivative $\overleftarrow{\partial}_{\!\eta'}\eta = e_i$ and right-derivative $\overrightarrow{\partial}_{\!\eta'}\eta =a$ for an arrow $a\in Q_1$ with $\tail(a)=i$. Similarly, the  facets $\eta^\prime\subset \eta$ that satisfy $\tail(\eta^\prime)=\tail(\eta)$ have left-derivative $\overleftarrow{\partial}_{\!\eta'}\eta = a$ for an arrow $a\in Q_1$ with $\head(a)=i$ and right-derivative $\overrightarrow{\partial}_{\!\eta'}\eta =e_i$. In each case relabel the facet as $\eta^\prime_a:=\eta^\prime$ for the corresponding arrow $a\in Q_1$. Then
\begin{eqnarray*}
d_3\big(1\otimes[\eta]\otimes 1\big) & = & \sum_{\codim(\eta_a^\prime,\eta)=1,\; \head(\eta_a^\prime)=i} 1\otimes [\eta^\prime_a]\otimes a - \sum_{\codim(\eta_a^\prime,\eta)=1,\; \tail(\eta_a^\prime)=i} a\otimes [\eta^\prime_a]\otimes 1\\
 & = &  \sum_{\{a\in Q_1 \mid \tail(a)=i\}} 1\otimes [\eta^\prime_a]\otimes a - \sum_{\{a\in Q_1 \mid \head(a)=i\}} a\otimes [\eta^\prime_a]\otimes 1.
\end{eqnarray*}
Our differentials are seen to coincide with those from \cite[Theorem 7.3]{Broomhead}, though note that our convention for composing arrows (where $a^\prime a$ means `$a^\prime$ follows $a$') differs from that in \cite{Broomhead}. 
\end{proof}

\begin{remark}
\label{rem:signsW_G}
In the course of the proof we use the signs in the dimer superpotential $W_\Gamma$ to write $p_a^+-p_a^-$, and hence to choose the incidence function $\varepsilon$. However, we chose this incidence function only to reproduce precisely Broomhead's resolution. Since any incidence function on $\Delta$ suffices for Theorem~\ref{thm:dimerresolution},  knowledge of the signs of $W_\Gamma$ is unnecessary in general.

\end{remark}

 \section{The cellular resolution conjecture}
 \label{sec:conjecture}
We conclude by formulating a conjecture on the existence of toric cell complexes and cellular resolutions for consistent toric algebras in arbitrary dimension. As a first step we illustrate two key ingredients by presenting an example of a four-dimensional consistent toric algebra.  A nice class of such algebras arises from tilting bundles on smooth toric Fano threefolds, and we study here a representative example of this class.

\subsection{A consistent fourfold example}
\label{sec:conjectureexample}
 Let $X=\Spec \Bbbk[\sigma^{\vee}\cap M]$ be the Gorenstein toric fourfold determined by the cone $\sigma$ generated by the vectors $v_1=(1,0,0,1)$, $v_2=(0,1,0,1)$, $v_3=(0,0,1,1)$, $v_4=(-1,-1,2,1)$, $v_5=(-1,-1,1,1)$ and $v_6=(0,0,-1,1)$. For $1 \leq \rho \leq 6$,  write $D_\rho$ for the toric divisor in $X$ defined by the ray of $\sigma$ generated by $v_\rho$. Then $\Cl (X)$ is the quotient of the free abelian group generated by $\mathcal{O}_X(D_1)$, $\mathcal{O}_X(D_5+D_6)$, $\mathcal{O}_X(D_6)$, by the subgroup generated by $\mathcal{O}_X(D_1+D_5+2D_6)$. The singularity $X$ admits several crepant resolutions $\tau \colon Y \rightarrow X$, one of which is given by the total space $\mathrm{tot}(\omega_Z)$ of the canonical bundle of the smooth Fano threefold $Z$ listed as number 11 by Oda~\cite[Figure~2.7]{Oda}.  Consider the collection
$$
\mathscr{E}  = \left(\begin{array}{cc} \mathcal{O}_X, \mathcal{O}_X(D_1), \mathcal{O}_X(2D_1),\mathcal{O}_X(D_6),\mathcal{O}_X(D_5+D_6),\\
\mathcal{O}_X(D_1+D_6), \mathcal{O}_X(D_1+D_5+D_6),\mathcal{O}_X(2D_1+D_6) \end{array}\right)
$$
on $X$. This collection is obtained from a tilting bundle\footnote{Our chosen Fano $Z$ provides an interesting starting point since the construction of tilting bundles by Bondal~\cite{BondalMFO} does not apply. The tilting bundle here was constructed originally by Greg Smith~\cite[report by Craw]{BondalMFO}.} on $Z$ by pulling back each summand via $\mathrm{tot}(\omega_Z)\to Z$ and then pushing forward via the crepant resolution $\mathrm{tot}(\omega_Z)\to X$. 

The quiver of sections $Q$ from Figure~\ref{fig:3foldFano11} 
\begin{figure}[!ht]
    \centering
   \subfigure[Quiver of sections]{
  \psset{unit=1cm}
   \begin{pspicture}(0.4,-0.5)(6.4,4)
        \cnodeput(0,0){A}{\small{0}}
        \cnodeput(3,0){B}{\small{1}} 
        \cnodeput(6,0){C}{\small{2}} 
        \cnodeput(0,2.4){D}{\small{3}}
           \cnodeput(1.1,1.55){E}{\small{4}} 
         \cnodeput(3,2.4){F}{\small{5}} 
            \cnodeput(4.1,1.55){G}{\small{6}} 
        \cnodeput(6,2.4){H}{\small{7}}
          \cnodeput(4.1,3.9){I}{\small{0}}  
     \cnodeput(7.1,3.9){J}{\small{1}}   
    \psset{nodesep=0pt}
     \nccurve[angleA=17,angleB=163]{->}{A}{B}\lput*{:U}{\tiny{$x_1$}}
     \ncline{->}{A}{B}\lput*{:U}{\tiny{$x_2$}}
     \nccurve[angleA=343,angleB=197]{->}{A}{B}\lput*{:U}{\tiny{$x_4x_5$}}
     \ncline{->}{A}{D}\lput*{:270}{\tiny{$x_6$}}
     \nccurve[angleA=17,angleB=163]{->}{B}{C}\lput*{:U}{\tiny{$x_1$}}
     \ncline{->}{B}{C}\lput*{:U}{\tiny{$x_2$}}
     \nccurve[angleA=343,angleB=197]{->}{B}{C}\lput*{:U}{\tiny{$x_4x_5$}}   
     \nccurve[angleA=140,angleB=290]{->}{B}{D}\lput*{:205}{\tiny{$x_3x_4$}}
     \nccurve[angleA=130,angleB=320]{->}{C}{E}\lput*{:180}{\tiny{$x_3$}}
      \ncline{->}{B}{F}\lput*{:270}(0.45){\tiny{$x_6$}} 
     \ncline{->}{C}{H}\lput*{:270}{\tiny{$x_6$}}
     \ncline{->}{G}{I}\lput*{:270}(0.6){\tiny{$x_6$}}
     \ncline{->}{D}{E}\lput*{:U}{\tiny{$x_5$}}   
     \ncline{->}{F}{G}\lput*{:U}{\tiny{$x_5$}}       
     \ncline{->}{E}{F}\lput*{:U}{\tiny{$x_4$}}
     \ncline{->}{G}{H}\lput*{:U}{\tiny{$x_4$}}         
     \nccurve[angleA=10,angleB=170]{->}{D}{F}\lput*{:U}{\tiny{$x_1$}}
     \nccurve[angleA=350,angleB=190]{->}{D}{F}\lput*{:U}{\tiny{$x_2$}}
     \nccurve[angleA=10,angleB=170]{->}{F}{H}\lput*{:U}{\tiny{$x_1$}}
     \nccurve[angleA=350,angleB=190]{->}{F}{H}\lput*{:U}{\tiny{$x_2$}}
     \nccurve[angleA=125,angleB=345]{->}{H}{I}\lput*{:180}{\tiny{$x_1 x_3$}}
     \ncline{->}{H}{I}\lput*{:180}{\tiny{$x_2x_3$}}               
     \nccurve[angleA=155,angleB=295]{->}{H}{I}\lput*{:180}(0.6){\tiny{$x_3 x_4 x_5$}}
     \nccurve[angleA=8,angleB=172]{->}{E}{G}\lput*{:U}{\tiny{$x_1$}}
     \nccurve[angleA=352,angleB=188]{->}{E}{G}\lput*{:U}{\tiny{$x_2$}}
     \ncline{->}{H}{J}\lput*{:U}{\tiny{$x_5x_6$}} 
  \end{pspicture}}
     \qquad \qquad  
     \subfigure[Listing the arrows]{
     \psset{unit=1cm}
     \begin{pspicture}(0.4,-0.5)(6.5,4)
      \cnodeput(0,0){A}{\small{0}}
      \cnodeput(3,0){B}{\small{1}} 
      \cnodeput(6,0){C}{\small{2}} 
      \cnodeput(0,2.4){D}{\small{3}}
      \cnodeput(1.1,1.55){E}{\small{4}} 
      \cnodeput(3,2.4){F}{\small{5}} 
      \cnodeput(4.1,1.55){G}{\small{6}} 
      \cnodeput(6,2.4){H}{\small{7}}
      \cnodeput(4.1,3.9){I}{\small{0}}  
      \cnodeput(7.1,3.9){J}{\small{1}}    
      \psset{nodesep=0pt}
      \nccurve[angleA=17,angleB=163]{->}{A}{B}\lput*{:U}{\tiny{$a_1$}}
      \ncline{->}{A}{B}\lput*{:U}{\tiny{$a_2$}}
      \nccurve[angleA=343,angleB=197]{->}{A}{B}\lput*{:U}{\tiny{$a_3$}}
      \ncline{->}{A}{D}\lput*{:270}{\tiny{$a_4$}}
      \nccurve[angleA=17,angleB=163]{->}{B}{C}\lput*{:U}{\tiny{$a_5$}}
      \ncline{->}{B}{C}\lput*{:U}{\tiny{$a_6$}}
      \nccurve[angleA=343,angleB=197]{->}{B}{C}\lput*{:U}{\tiny{$a_7$}}
      \nccurve[angleA=140,angleB=290]{->}{B}{D}\lput*{:205}{\tiny{$a_8$}}
      \ncline{->}{B}{F}\lput*{:270}(0.45){\tiny{$a_9$}} 
      \nccurve[angleA=130,angleB=320]{->}{C}{E}\lput*{:180}{\tiny{$a_{10}$}}
      \ncline{->}{C}{H}\lput*{:270}{\tiny{$a_{11}$}}
      \nccurve[angleA=8,angleB=172]{->}{D}{F}\lput*{:U}{\tiny{$a_{13}$}}
      \nccurve[angleA=352,angleB=188]{->}{D}{F}\lput*{:U}{\tiny{$a_{14}$}}
      \ncline{->}{D}{E}\lput*{:U}(0.4){\tiny{$a_{12}$}}   
      \ncline{->}{E}{F}\lput*{:U}{\tiny{$a_{15}$}}
      \nccurve[angleA=8,angleB=172]{->}{E}{G}\lput*{:U}{\tiny{$a_{16}$}}
      \nccurve[angleA=352,angleB=188]{->}{E}{G}\lput*{:U}{\tiny{$a_{17}$}}
      \ncline{->}{F}{G}\lput*{:U}(0.4){\tiny{$a_{18}$}}       
      \nccurve[angleA=8,angleB=172]{->}{F}{H}\lput*{:U}{\tiny{$a_{19}$}}
      \nccurve[angleA=352,angleB=188]{->}{F}{H}\lput*{:U}{\tiny{$a_{20}$}}
      \ncline{->}{G}{H}\lput*{:U}{\tiny{$a_{21}$}}         
      \ncline{->}{G}{I}\lput*{:270}(0.6){\tiny{$a_{22}$}}
      \nccurve[angleA=125,angleB=345]{->}{H}{I}\lput*{:180}{\tiny{$a_{23}$}}
      \ncline{->}{H}{I}\lput*{:180}{\tiny{$a_{24}$}}               
      \nccurve[angleA=155,angleB=295]{->}{H}{I}\lput*{:180}(0.6){\tiny{$a_{25}$}}
      \ncline{->}{H}{J}\lput*{:U}{\tiny{$a_{26}$}} 
  \psset{linecolor=lightgray}
        \end{pspicture}
             }
    \caption{A cyclic quivers of sections on the Gorenstein toric fourfold $X$}
  \label{fig:3foldFano11}
  \end{figure}
 is depicted in $\ZZ^3$, but we work in the class group of $X$ and hence take $ \mathcal{O}_X \sim  \mathcal{O}_X(D_1+D_5+2D_6)$.  If we order the arrows as in Figure~\ref{fig:3foldFano11}(b), then the superpotential is the sum
\begin{align*}
W&=a_{22}a_{17}a_{10}a_{5}a_3 + a_{22}a_{16}a_{10}a_{6}a_3 +a_{22}a_{18}a_{15}a_{10}a_6a_1+a_{22}a_{18}a_{15}a_{10}a_5a_2+ a_{22}a_{18}a_{13}a_{8}a_2 \\
& \quad+a_{22}a_{17}a_{10}a_{7}a_{1} + a_{22}a_{17}a_{12}a_{8}a_1 +a_{22}a_{18}a_{14}a_{8}a_1 + a_{22}a_{16}a_{10}a_{7}a_2 +a_{22}a_{16}a_{12}a_{8}a_2  \\ 
& \quad+ a_{23}a_{21}a_{18}a_{9}a_2 + a_{23}a_{20}a_{15}a_{12}a_4+ a_{23}a_{21}a_{17}a_{12}a_4 + a_{23}a_{21}a_{18}a_{14}a_4 + a_{23}a_{11}a_{7}a_{2} \\
& \quad+ a_{23}a_{11}a_{6}a_{3} + a_{23}a_{20}a_{9}a_{3}  + a_{24}a_{11}a_{7}a_{1} +a_{24}a_{11}a_{5}a_{3}+a_{24}a_{21}a_{18}a_{13}a_4  +a_{24}a_{21}a_{16}a_{12}a_4 \\ 
&  \quad + a_{24}a_{19}a_{9}a_{3} + a_{24}a_{21}a_{18}a_{9}a_1 + a_{24}a_{19}a_{15}a_{12}a_4+ a_{25}a_{19}a_{14}a_{4} + a_{25}a_{11}a_{6}a_{1} + a_{25}a_{20}a_{9}a_{1}  \\
 & \quad  + a_{25}a_{11}a_{5}a_{2} + a_{25}a_{19}a_{9}a_{2} + a_{25}a_{20}a_{13}a_{4} + a_{26}a_{21}a_{17}a_{10}a_5 + a_{26}a_{19}a_{14}a_{8}  \\
& \quad + a_{26}a_{19}a_{15}a_{10}a_6 + a_{26}a_{21}a_{16}a_{10}a_6 + a_{26}a_{20}a_{13}a_{8} + a_{26}a_{20}a_{15}a_{10}a_5
\end{align*}
of all anticanonical cycles in $Q$. By taking partial derivatives, we compute that
    \[
  J_W=\left(\begin{array}{rl} 
  a_6a_1-a_5a_2, a_{25}a_{11} - a_{22}a_{18}a_{15}a_{10}, & a_9a_1 - a_{13}a_4, a_{24}a_{21}a_{18} - a_{25}a_{20} \\
  a_{14}a_{4} - a_9a_2, a_{23}a_{21}a_{18} - a_{25}a_{19},  & a_5a_3-a_7a_1, a_{22}a_{17}a_{10} - a_{24}a_{11} \\
  a_{9}a_3 - a_{15} a_{12}a_{4}, a_{24}a_{19} - a_{23}a_{20}, &  a_{11}a_{6} - a_{20}a_{9}, a_{1}a_{25} - a_3a_{23}\\
  a_{11}a_{5} - a_{19}a_{9}, a_2a_{25} - a_3a_{24}, & a_{20}a_{13} - a_{19} a_{14}, a_4a_{25}-a_8a_{26} \\
  a_{11}a_{7} - a_{21}a_{18}a_{9}, a_1a_{24} - a_2a_{23}, & a_{19}a_{15} - a_{21}a_{16}, a_{12}a_{4}a_{24} - a_{10}a_{6}a_{26} \\
  a_{16}a_{12} - a_{18}a_{13}, a_{8}a_2a_{22} - a_4a_{24}a_{21},&  a_7a_2 - a_6a_3, a_{22}a_{16}a_{10}-a_{23}a_{11} \\
  a_{17}a_{12} - a_{18}a_{14}, a_8a_1a_{22} - a_4a_{23}a_{21}, & a_{20}a_{15} - a_{21}a_{17}, a_4a_{23}a_{12} - a_{10}a_5a_{26} \\
  a_{17}a_{10}a_{5} - a_{16}a_{10}a_{6}, a_3a_{22} - a_{26}a_{21}, & a_{10}a_{7}-a_{12}a_{8}, a_1a_{22}a_{17} - a_2a_{22}a_{16} \\ 
  a_{14}a_{8} - a_{15}a_{10}a_{6}, a_1a_{22}a_{18} - a_{26}a_{19}, & a_{15}a_{10}a_{5} - a_{13}a_{8}, a_2a_{22}a_{18} - a_{26}a_{20}
    \end{array}\right).
 \]
 This ideal is equal to $J_{\mathscr{E}}$, so the toric algebra $A:=A_\mathscr{E}$ is consistent. 

\begin{remark}
 For each minimal generator $p^+-p^-$ of $J_W$ there exists another minimal generator $q^+-q^-$ with the property that $\tail(q^{\pm})=\head(p^{\pm})$, $\head(q^{\pm})=\tail(p^{\pm})$ and $x^{\div(q^{\pm})}=\prod_{\rho=1}^6 x_\rho / x^{\div(p^{\pm})}$. We list two such pairs on each line in $J_W$ above. This phenomenon is one aspect of the duality property of $\Delta$ described in Proposition~\ref{prop:duality4fold} below.
\end{remark}
  
\subsection{The cellular resolution for the fourfold example}
\label{sec:conjectureresolution}
We now sketch the construction of the toric cell complex $\Delta\subset \mathbb{T}^4$ for this example.  The analogue of diagram \eqref{eqn:3torusdiagram} is
 \begin{equation}
 \label{eqn:4torusdiagram}
  \begin{CD}   
    0@>>> M  @>>> \ZZ^6    @>{\deg}>> \Cl(X)@>>> 0\\
     @.   @|            @VV{f\vert_{\ZZ^6}}V   @.      @.          \\
0 @>>> M @>>> \RR^4  @>>> \mathbb{T}^4 @>>> 0 
 \end{CD}
 \end{equation}
where $f\colon \RR^6\to \RR^4:=M\otimes_\ZZ \RR$ is orthogonal projection on to the subspace spanned by $M$. Since $A$ is consistent, Corollary~\ref{cor:arrowsinA} shows that the covering quiver $\widetilde{Q}\subset \RR^6$ is the union of all $M$-translates of the quivers $\widetilde{Q}(i)$ for $i\in Q_0$. Explicit computation of the image $f(\widetilde{Q}(i))$ for $i\in Q_0$ shows that $\bigcup_{i\in Q_0} f(\widetilde{Q}(i))$ is an embedded quiver in $\RR^4$.  Define $\Delta_0$ and $\Delta_1$ to be the union of all $M$-translates of the vertex set and arrow set respectively of this quiver. The resulting subsets of $\mathbb{T}^4$ define the collections $\Delta_0$ of $0$-cells and $\Delta_1$ of 1-cells and, just as in Lemma~\ref{lem:McKaybijections}, there are canonical bijections between $Q_0$ and $\Delta_0$, and between $Q_1$ and $\Delta_1$. Moreover,  a lengthy and tedious calculation shows that we may define collections $\Delta_k$ of $M$-periodic subsets in $\RR^4$ for $k=2,3,4$, where the canonical bijection for $\Delta_2$ from  Lemma~\ref{lem:McKaybijections} also holds.  The corresponding closed subsets in $\mathbb{T}^4$ define the \emph{toric cell complex} $\Delta\subset \mathbb{T}^4$. As before, each cell $\eta$ in $\Delta$ has a well-defined head $\head(\eta)\in \Delta_0$, tail $\tail(\eta)\in \Delta_0$ and label $\div(\eta)\in \NN^6$. 

In this case, we have $\vert \Delta_0\vert = \vert \Delta_4\vert = 8$, that $\vert \Delta_1\vert = \vert \Delta_3\vert = 26$, and that $\vert \Delta_2\vert = 36$. Explicit computation and inspection shows that $\Delta$ satisfies the following duality property:

\begin{proposition}
\label{prop:duality4fold}
The map $\tau\colon \Delta\to \Delta$ that assigns to each $\eta\in \Delta_k$ the unique cell $\eta^\prime\in \Delta_{4-k}$ with $\tail(\eta^\prime)=\head(\eta)$, $\head(\eta^\prime)=\tail(\eta)$ and $x^{\div(\eta^\prime)} = \prod_{\rho=1}^6x_\rho/x^{\div(\eta)}$ is an involution. 
\end{proposition}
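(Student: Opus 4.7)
The plan is to verify the involution $\tau$ dimension by dimension, exploiting the canonical bijections established for $\Delta_0$, $\Delta_1$, $\Delta_2$ in the explicit computation of the toric cell complex. Since the proposition is stated for the specific fourfold $X$ of Section~\ref{sec:conjectureexample}, the proof amounts to confirming, in each of the five dimensions, both that a unique cell with the prescribed head, tail and label exists, and that the duality conditions are self-reciprocal.

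First I would handle the extremal dimensions. For $k=0$, a 0-cell $\eta$ satisfies $\head(\eta)=\tail(\eta)=i$ for some $i\in Q_0$ and $\div(\eta)=0$. By construction each 4-cell is indexed by a vertex $i\in Q_0$ and has head $=$ tail $=i$ with label $(1,\dots,1)$, so there is a unique $\eta'\in\Delta_4$ whose head, tail and label match the required data $i$, $i$ and $\prod_{\rho=1}^{6}x_\rho$. The case $k=4$ is symmetric. For $k=1$ and $k=3$ I would use the bijection $\Delta_1\leftrightarrow Q_1$ together with the equality $\vert\Delta_1\vert=\vert\Delta_3\vert=26$. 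Given $a\in Q_1$, Proposition~\ref{prop:div(a)} (valid since $A$ is consistent) guarantees that $x^{\div(a)}$ divides $\prod_\rho x_\rho$, so the complementary monomial $\prod_\rho x_\rho/x^{\div(a)}$ is the label of a well-defined F-term equivalence class of paths from $\head(a)$ to $\tail(a)$. I would then inspect the list of 26 three-cells produced by the explicit computation and exhibit, for every arrow $a$, a unique $\eta'\in\Delta_3$ with $\tail(\eta')=\head(a)$, $\head(\eta')=\tail(a)$ and this complementary label, thereby matching the two sets of 26 cells bijectively.

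For $k=2$ the proof rests on the bijection between 2-cells and the minimal generators $p^{+}-p^{-}$ of $J_{\mathscr{E}}$, combined with the remark immediately following the description of $J_W$ in Section~\ref{sec:conjectureexample}: for each minimal generator $p^{+}-p^{-}$ there is a second minimal generator $q^{+}-q^{-}$ with $\tail(q^{\pm})=\head(p^{\pm})$, $\head(q^{\pm})=\tail(p^{\pm})$, and $x^{\div(q^{\pm})}=\prod_\rho x_\rho/x^{\div(p^{\pm})}$. I would verify, by running through the 36 listed generators pair by pair, that this assignment is a fixed-point-free involution on generators (or at worst pairs each generator with itself when the data are self-dual), and hence that it induces an involution on $\Delta_2$. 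The main obstacle will be this $k=2$ case, since unlike the other dimensions, the 2-cells are not indexed by vertices or arrows of $Q$ but by relations, and completing the bookkeeping requires explicit inspection of all 36 generators to confirm that the duality pairing exhausts $\Delta_2$.

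Finally, $\tau^{2}=\id$ follows formally from the uniqueness already established: the three conditions defining $\tau(\eta)$ are symmetric in $\eta$ and $\tau(\eta)$, so applying them to $\tau(\eta)$ reproduces $\head(\eta)$, $\tail(\eta)$ and $x^{\div(\eta)}$, and uniqueness gives $\tau^{2}(\eta)=\eta$ in every dimension. Taken together with the preceding case analysis this proves that $\tau$ is a well-defined involution $\Delta\to\Delta$ exchanging $\Delta_k$ with $\Delta_{4-k}$.
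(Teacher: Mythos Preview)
Your proposal is correct and essentially matches the paper's approach: the paper states only that the proposition follows from ``explicit computation and inspection'' of the toric cell complex for this specific fourfold, and your dimension-by-dimension case analysis is precisely an elaboration of what that inspection entails. Your additional observation that $\tau^2=\id$ follows formally from uniqueness and the symmetry of the defining conditions is a tidy way to finish.
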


Figure~\ref{fig:4foldToricCell} depicts several cells of $\Delta$. Figure~\ref{fig:4foldToricCell}(a) shows arrow $a_{23}$ and the dual 3-cell, where paths from vertex 0 at the bottom to vertex 0 at the top traverse anticanonical cycles in $Q\subset \mathbb{T}^4$.  Figure~\ref{fig:4foldToricCell}(b) shows the 3-cells dual to arrows $a_1$ and $a_{24}$ intersect along a given shaded 2-cell, illustrating that $\Delta$ satisfies property \eqref{eqn:facetsproperty}. Note that both 3-cells from Figure~\ref{fig:4foldToricCell}(b) lie in the 4-cell dual to vertex $0$, though we do not draw every edge in the 4-cell for the sake of clarity. Figures~\ref{fig:4foldToricCell}(c) is similar, and shows for example that the 3-cell dual to $a_5$ is not equidimensional.
     \begin{figure}[!ht]
    \centering
   \subfigure[]{
 \psset{unit=1.2cm}             
                \begin{pspicture}(0.2,-0.3)(3,4.4)
        \cnodeput(1.5,0){A}{\tiny{0}}
        \cnodeput(3,0.8){B}{\tiny{1}}
        \cnodeput(0.3,0.8){C}{\tiny{1}} 
        \cnodeput(1.5,0.8){D}{\tiny{3}}
        \cnodeput(0.8,2.1){E}{\tiny{4}} 
        \cnodeput(2.3,1.6){F}{\tiny{5}} 
        \cnodeput(1.8,2.6){G}{\tiny{6}} 
        \cnodeput(0,2.9){H}{\tiny{5}}
        \cnodeput(3,2.9){I}{\tiny{2}}  
        \cnodeput(1.7,3.6){J}{\tiny{7}}
        \cnodeput(0.7,4.5){K}{\tiny{0}}
        \psset{nodesep=0pt}
        \ncline{->}{A}{B}\lput*{:U}{\tiny{$a_{2}$}}
        \ncline{->}{A}{C}\lput*{:180}{\tiny{$a_{3}$}}
        \ncline{->}{A}{D}\lput*{:270}{\tiny{$a_{4}$}}
        \ncline{->}{B}{F}\lput*{:180}{\tiny{$a_{9}$}}
        \ncline{->}{B}{I}\lput*{:270}{\tiny{$a_{7}$}}
        \ncline{->}{C}{H}\lput*{:270}{\tiny{$a_{9}$}}
        \ncline{->}{D}{F}\lput*{:U}{\tiny{$a_{14}$}}
        \ncline{->}{E}{H}\lput*{:180}{\tiny{$a_{15}$}}
        \ncline{->}{E}{G}\lput*{:U}{\tiny{$a_{17}$}}
        \ncline{->}{G}{J}\lput*{:280}{\tiny{$a_{21}$}}
        \ncline{->}{H}{J}\lput*{:U}{\tiny{$a_{20}$}}
        \ncline{->}{I}{J}\lput*{:180}{\tiny{$a_{11}$}}
        \ncline{->}{J}{K}\lput*{:230}{\tiny{$a_{23}$}}
        \psset{linestyle=solid}
        \ncline{->}{C}{I}\lput*{:U}{\tiny{$a_{6}$}}
        \psset{linestyle=solid}
        \ncline{->}{D}{E}\lput*{:180}{\tiny{$a_{12}$}}
        \ncline{->}{F}{G}\lput*{:180}{\tiny{$a_{18}$}}
    \end{pspicture}             
}
   \qquad \qquad  \qquad
 \subfigure[]{
  \psset{unit=0.71cm}
   \begin{pspicture}(-0.1,-0.3)(3,7.5)
    \pspolygon[linecolor=lightgray,fillstyle=hlines*,hatchangle=34](0,1)(0.52,5)(1.15,6)(2.3,7)(1.5,2)
      \cnodeput(1.5,0){A}{\tiny{0}}
      \cnodeput(0,1){B}{}
      \cnodeput(3,1){C}{} 
      \cnodeput(-0.9,2){D}{}
      \cnodeput(1.5,2){E}{} 
      \cnodeput(-0.3,3){F}{} 
      \cnodeput(2.25,3){G}{} 
      \cnodeput(-2,4){H}{}
      \cnodeput(0.8,4){I}{}  
      \cnodeput(3,4){J}{}
      \cnodeput(-1.85,5){K}{}
      \cnodeput(0.52,5){L}{}
      \cnodeput(4.25,5){M}{} 
      \cnodeput(-1.55,6){N}{}
      \cnodeput(1.15,6){O}{}    
      \cnodeput(-0.15,7){P}{}
      \cnodeput(2.3,7){Q}{}     
      \cnodeput(0.25,8){R}{\tiny{0}}            
        \psset{nodesep=0pt}
            \ncline{->}{A}{B}\lput*{:180}{\tiny{$a_{1}$}}
              \ncline{->}{A}{C}
               \ncline{->}{A}{G}
          \ncline{->}{B}{D}
         \ncline{->}{B}{F}
         \ncline{->}{B}{E}
         \ncline{->}{B}{L}
         \ncline{->}{C}{E}
          \ncline{->}{C}{M}
          \ncline{->}{D}{H}
          \ncline{->}{D}{P}
          \ncline{->}{E}{I}
          \ncline{->}{E}{Q}
          \ncline{->}{F}{K}
          \ncline{->}{F}{I}
          \ncline{->}{G}{L}
          \ncline{->}{G}{J}
          \ncline{->}{H}{K}
          \ncline{->}{I}{N}
          \ncline{->}{J}{O}
          \ncline{->}{J}{M}
          \ncline{->}{K}{N}
          \ncline{->}{L}{O}
          \ncline{->}{L}{P}
          \ncline{->}{M}{Q}
          \ncline{->}{N}{R}
          \ncline{->}{O}{Q}
          \ncline{->}{P}{R}
          \ncline{->}{Q}{R}\lput*{:180}{\tiny{$a_{24}$}}
        \end{pspicture}  
  }
   \qquad \qquad  
   \qquad 
     \subfigure[]{
     \psset{unit=0.71cm}
  \begin{pspicture}(-0.2,-0.3)(3.3,7.5)
   \pspolygon[linecolor=lightgray,fillstyle=hlines*,
          hatchangle=45](1,5)(0.75,6)(2.35,7)(2.95,6)
        \cnodeput(0.5,0){A}{\tiny{2}}
                \cnodeput(2.25,2){B}{}
        \cnodeput(3.5,3){C}{} 
        \cnodeput(0.15,4){D}{}
           \cnodeput(3.85,4){E}{} 
         \cnodeput(1,5){F}{} 
         \cnodeput(1.72,5){G}{} 
        \cnodeput(-0.5,6){H}{}
         \cnodeput(0.75,6){I}{}  
            \cnodeput(2.95,6){J}{}
              \cnodeput(-0.75,7){K}{}
                \cnodeput(0.25,7){L}{}
            \cnodeput(2.35,7){M}{} 
             \cnodeput(0,8){N}{\tiny{2}}
         \psset{nodesep=0pt}
          \ncline{->}{A}{B}
                \ncline{->}{A}{F}\lput*{:U}{\tiny{$a_{11}$}}
             \ncline{->}{B}{C}
         \ncline{->}{B}{D}
             \ncline{->}{C}{E}
          \ncline{->}{C}{G}
          \ncline{->}{D}{G}
          \ncline{->}{D}{I}
          \ncline{->}{E}{J}
          \ncline{->}{F}{H}
          \ncline{->}{F}{I}
          \ncline{->}{F}{J}
          \ncline{->}{G}{M}
          \ncline{->}{H}{K}
          \ncline{->}{H}{L}
          \ncline{->}{I}{K}
          \ncline{->}{I}{M}
          \ncline{->}{J}{L}
            \ncline{->}{J}{M}
              \ncline{->}{K}{N}
                \ncline{->}{L}{N}
                  \ncline{->}{M}{N}\lput*{:180}{\tiny{$a_{5}$}}
             \end{pspicture}
          }
         \caption{Cells of $\Delta$: (a) the 3-cell dual to $a_{23}$; (b) the $3$-cells in $\Delta(0)$ dual to arrows $a_1$ and $a_{24}$;  (c) the $3$-cells in $\Delta(2)$ dual to arrows $a_{11}$ and $a_{5}$.}
  \label{fig:4foldToricCell}
  \end{figure}
 
 We verify by an exhaustive examination that $\Delta$ that satisfies property \eqref{eqn:facetsproperty} and, in addition, that $\Delta$ admits an incidence function $\varepsilon\colon \Delta\times \Delta\to\{0,\pm 1\}$. As a result,  the minimal projective $(A,A)$-bimodule resolution of the consistent toric algebra $A=A_\mathscr{E}$ can be constructed as a cellular resolution. Indeed, it can be shown directly, by adapting the proof in \cite{Broomhead}, that the minimal projective resolution of $A$ as a $(A,A)$-bimodule is
 \begin{equation}
 \label{eqn:4foldresolution}
0 \longrightarrow P_4 \xlongrightarrow{d_4} P_3 \xlongrightarrow{d_3} P_2 \xlongrightarrow{d_2} P_1 \xlongrightarrow{d_1} P_0 \xlongrightarrow{\mu} A \longrightarrow 0,
 \end{equation}
  with terms $\displaystyle{P_k=\bigoplus_{\eta \in \Delta_k} Ae_{\head(\eta)}\otimes [\eta] \otimes Ae_{\tail(\eta)}}$ and differentials
 $$
 d_{k}(1 \otimes [\eta] \otimes 1)=\sum_{\codim(\eta',\eta)=1}\varepsilon(\eta,\eta')  \overleftarrow{\partial}_{\!\eta'}\eta\otimes[\eta'] \otimes \overrightarrow{\partial}_{\!\eta'}\eta,
$$
where $\mu\colon P_0=\bigoplus_{i \in \Delta_0} Ae_{i}\otimes Ae_{i} \rightarrow A$ is the multiplication map.

\subsection{On signs and syzygies}
Before stating the main conjecture we make a key observation which explains and justifies our decision to introduce no signs in the superpotentials throughout this paper, namely, that \emph{it is impossible to introduce signs in the superpotential $W$ above so that the generators of $J_W$ can be recovered directly by taking partial derivatives}. 

Indeed, consider only terms of $W$ that involve $a_{23}\in Q_1$, namely, those arising in $a_{23}\partial_{a_{23}}W$. Figure~\ref{fig:4foldToricCell}(a) illustrates all seven of the corresponding anticanonical cycles in $Q\subset \mathbb{T}^4$: the 3-cell $\eta_{23}$ dual to arrow $a_{23}$ is drawn as a convex 3-polytope with arrow $a_{23}$ sticking out of the top. The facets of $\eta_{23}$ correspond to those generators of $J_W$ arising from partials of $W$ with respect to paths involving $a_{23}$, e.g.,  relation $a_{14}{a_4}-a_9a_2$ arises from $\partial_{a_{23}a_{21}a_{18}}W$.   We now attempt to introduce signs in $W$ so that the generators of $J_W$ are recovered directly from partial derivatives of $W$.  If, say, we fix the sign of $a_{23}a_{21}a_{18}a_{9}a_2$ to be $+1$, then the relation $a_{14}a_{4} - a_9a_2$ forces the sign of  $a_{23}a_{21}a_{18}a_{14}a_4$ to be $-1$, but then $a_{17}a_{12} - a_{18}a_{14}$ forces the sign of $a_{23}a_{21}a_{17}a_{12}a_4$ to be $+1$, and so on. By repeating, we hop from one anticanonical cycle to another around the surface of the polytope $\eta_{23}$. There are an odd number of paths, so we obtain sign $-1$ for the original path $a_{23}a_{21}a_{18}a_{9}a_2$ after passing once around $\eta_{23}$. This contradiction shows that introducing signs in $W$ cannot produce the necessary signs in $J_W$. 
  
\begin{remark}
Comparing the third term $P_3$ in the cellular resolution \eqref{eqn:NCBSresolution} with the third term in the resolution as described by Butler--King~\cite[(1.1)]{ButlerKing} shows that the set of 3-cells $\Delta_3$ provides a minimal set of bimodule generators for the space of syzygies
\[
A\otimes\text{Tor}^A_3(U_0,U_0)\otimes A\cong A\otimes \frac{JI\cap IJ}{I^2+JIJ}\otimes A,
\]
where $I:=J_\mathscr{E}$ and $J$ is the ideal in $\kk Q$ generated by the set of  arrows. As Alastair King remarks, the syzygy corresponding to the cell $\eta_{23}\in \Delta_3$ from Figure~\ref{fig:4foldToricCell}(a) provides an equation with signs that includes all seven terms of $W$ involving $a_{23}$. This is indeed the case, but this does not contradict the assertion above.  To see this, list all seven relations defined by codimension-one faces of $\eta_{23}$ as $r_1 := a_{14}a_4 - a_9a_2$, $r_2 := a_9a_3 - a_{15}a_{12}a_4$, $r_3 := a_7a_2 - a_6a_3$, $r_4 := a_{11}a_7 - a_{21}a_{18}a_9$, $r_5 :=  a_{20}a_9-a_{11}a_6$, $r_6 := a_{21}a_{17}-a_{20}a_{15}$ and $r_7 := a_{17}a_{12} - a_{18}a_{14}$. The equation
\begin{equation}
\label{eqn:syzygyeqn}
a_{21}a_{18}r_1 + a_{20}r_2+a_{11}r_3  - r_4a_2 - r_5a_3 - r_6a_{12}a_4 = a_{21}r_7a_4\in JIJ
\end{equation}
shows how to pass between two presentations of the syzygy 
corresponding to $\eta_{23}\in \Delta_3$:
\[
s_{23} = [a_{21}a_{18}r_1 + a_{20}r_2+a_{11}r_3] = [r_4a_2 + r_5a_3 + r_6a_{12}a_4]\in \text{Tor}^A_3(U_0,U_0)
\] Multiplying \eqref{eqn:syzygyeqn} on the left by $a_{23}$ and expanding provides an equation with signs linking the seven terms of $W$ involving $a_{23}$, but each term appears \emph{twice} with opposite signs.
\end{remark}

\subsection{The main conjecture}
To formulate the cellular resolution conjecture, assume that $A$ is the consistent toric algebra associated to a collection $\mathscr{E}$ on a Gorenstein affine toric variety $X$ of dimension $n$. Let $\widetilde{Q}\subset \RR^d$ denote the covering quiver of the quiver of sections $Q$ of $\mathscr{E}$. Write $f\colon \RR^d\to \RR^n:=M\otimes_\ZZ \RR$ for the orthogonal projection. A priori, the vertices and arrows in the image $f(\widetilde{Q})$ may collide and intersect. Nevertheless, in every case considered in this paper, the toric cell complex $\Delta\subset \mathbb{T}^n$ is constructed from an $M$-periodic quiver in $\RR^n$ whose 0-cells and 1-cells are supported in the image $f(\widetilde{Q})\subset \RR^n$, and we suggest that this can always be done if the global dimension of $A$ is equal to $n$. More precisely, we formulate the following conjecture.

\begin{conjecture}
If the global dimension of a consistent toric algebra $A$ equals the dimension of $X$, then the toric cell complex $\Delta\subset\mathbb{T}^n$ exists and is constructed as above. Moreover, \eqref{eqn:NCBSresolution} is the minimal projective $(A,A)$-bimodule resolution of $A$ as in Theorem~\ref{thm:1.2}.
\end{conjecture}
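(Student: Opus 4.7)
The plan is to proceed in two stages: first construct $\Delta \subset \mathbb{T}^n$ inductively from dimension $0$ up, and then use any incidence function on $\Delta$ to build the complex \eqref{eqn:NCBSresolution} and prove it resolves $A$. Throughout I will mimic the twin templates from Sections~\ref{sec:McKay} (skew group case) and~\ref{sec:dimers} (dimer case), using consistency of $A$ as the replacement for the specific combinatorial inputs (cubes, resp.\ bipartite surface).

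First I would show that the image $f(\widetilde{Q}) \subset \RR^n$ is a well-defined $M$-periodic embedded quiver, whose vertex set and arrow set descend to $\Delta_0$ and $\Delta_1$ in bijection with $Q_0$ and $Q_1$. Corollary~\ref{cor:arrowsinA} reduces this to analyzing the subquivers $\widetilde{Q}(i)$ living inside the unit hypercubes $\mathsf{C}(u_i)$. The key input is the cyclic ordering of the vectors $f(\chi_\rho)$ for $\rho \in \sigma(1)$: as in the first step of Theorem~\ref{thm:reconstructingdimer}, the element $\mathbf{e}_z = \sum_\rho \iota^*(\chi_\rho)$ lies in the interior of $\sigma$, and the cyclic order of the vertices of the lattice polytope $P$ is preserved under projection to the slice $\{\mathbf{e}_z\}^\perp$. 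This convexity should force $f$ to be injective on the union $\bigcup_i \widetilde{Q}(i)$ modulo $M$. Two-cells $\Delta_2$ are then read off from the bijection of Lemma~\ref{lem:McKaybijections}/\ref{lem:dimerbijection}, whose proof depends only on consistency and carries over verbatim: each minimal generator $p^+ - p^-$ of $J_\mathscr{E} = J_W$ provides the closed walk bounding a unique $2$-cell.

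For $3 \leq k \leq n$ I would define $\Delta_k$ recursively via the duality involution $\tau$ from Propositions~\ref{prop:dualityMcKay} and the analog for dimers. Consistency of $A$ provides a pairing between cycles through vertex $i$ with divisor $x^{\div(p)}$ and cycles through $i$ with divisor $\prod_\rho x_\rho/x^{\div(p)}$, so the set of top cells $\Delta_n$ should be in bijection with $Q_0$, with each $\eta_i$ the ``Newton polytope'' assembled from all anticanonical paths in $\widetilde{Q}(i)$. Intermediate $\Delta_k$ are then obtained by slicing, generalizing Proposition~\ref{prop:3cellpolygons}: given a $k$-face of the appropriate ``skeleton'' of $\widetilde{Q}(i)$, take the convex hull of its image under $f$. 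The duality $\tau$ and the facets property \eqref{eqn:facetsproperty} have to be verified case-by-case from the local combinatorics at each codimension-two cell; the fourfold example in Section~\ref{sec:conjectureexample} is the model for this verification.

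Once $\Delta$ is in place, the resolution argument runs as in Proposition~\ref{prop:McKaycomplex} and Theorem~\ref{thm:dimerresolution}. Any incidence function $\varepsilon$ on $\Delta$ exists by propagating signs along $1$-cells using the arrow orientation, along $2$-cells using the canonical writing $p^+ - p^-$ of each F-term relation, and along higher cells using \eqref{eqn:signcondition}; the computation $d_{k-1}\circ d_k = 0$ reduces mechanically to \eqref{eqn:signcondition}, and different incidence functions give isomorphic complexes. Exactness would follow by combining the Koszul-style $(A,A)$-bimodule argument of \cite{TateVandenbergh} (used in Theorem~\ref{thm:McKayresolution}) with Broomhead's length-$n$ dualizing argument (used in Theorem~\ref{thm:dimerresolution}): the duality $\tau$ forces the complex to be self-dual up to a shift, so it suffices to prove exactness in the middle, and this should reduce to the consistency statement that $\pi = \wt$ from Lemma~\ref{lem:Kergens}. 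Minimality is automatic: for $a \in Q_1$ the left and right derivatives $\overleftarrow{\partial}_{\!\eta'}\eta$ and $\overrightarrow{\partial}_{\!\eta'}\eta$ associated to facets of a cell with $\codim(\eta',\eta)=1$ lie in the augmentation ideal, since each such derivative is a nontrivial path.

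The main obstacle will be the canonical construction of $\Delta_k$ for $3 \leq k \leq n-1$: in the skew group case the cells are faces of a cube and in the dimer case they are polygonal slices, but in general one needs a uniform rule that simultaneously respects duality $\tau$, has the correct local-global structure around each codimension-two face \eqref{eqn:facetsproperty}, and whose boundary map matches the syzygies of $A$. The fourfold example already exhibits non-equidimensional top cells and non-convex higher cells, so a purely convex-geometric definition will not suffice; one likely needs to define $\Delta_k$ directly in terms of $k$-th syzygies of $A \otimes_{U_0} A$, and then identify these abstract cells with subsets of $\mathbb{T}^n$ via the image under $f$ of carefully chosen anticanonical subcomplexes of $\widetilde{Q}$. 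Establishing that this identification is a homeomorphism onto a subset of $\mathbb{T}^n$ and that the resulting $\Delta$ admits an incidence function appears to be the technical heart of the problem.
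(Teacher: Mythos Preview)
This statement is a \emph{conjecture} in the paper, not a theorem; the paper offers no proof and presents the fourfold example of Sections~\ref{sec:conjectureexample}--\ref{sec:conjectureresolution} only as evidence. So there is no proof to compare against, and your proposal must be judged on its own. As you yourself concede in your final paragraph, the construction of $\Delta_k$ for $3\leq k\leq n-1$ is the heart of the problem, and your sketch does not supply it.

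Two of your proposed mechanisms do not generalise as stated. First, the ``cyclic ordering'' argument you borrow from Theorem~\ref{thm:reconstructingdimer} is specific to dimension three: there $P$ is a lattice polygon whose vertices admit a cyclic order preserved under projection, and this is what forces $f(\widetilde{Q})$ to embed. For $n\geq 4$ the polytope $P$ has no cyclic order on its vertices, so this step gives no control over collisions in $f(\widetilde{Q})$. Second, using the duality involution $\tau$ to \emph{define} higher cells is problematic. Duality only pairs $\Delta_k$ with $\Delta_{n-k}$, so from $\Delta_0,\Delta_1,\Delta_2$ you recover at most $\Delta_n,\Delta_{n-1},\Delta_{n-2}$; for $n\geq 6$ the intermediate $\Delta_3,\dots,\Delta_{n-3}$ remain undefined. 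More seriously, the corollary immediately following the conjecture shows that the duality property of $\Delta$ \emph{implies} that $A$ is Calabi--Yau, whereas the conjecture itself makes no Calabi--Yau assumption. Building $\tau$ into the construction therefore either begs the question or restricts the scope of what you are proving.

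A minor correction: your minimality argument is not quite right. For a facet $\eta'\subset\eta$ with $\head(\eta')=\head(\eta)$ the left derivative $\overleftarrow{\partial}_{\!\eta'}\eta$ is the idempotent $e_{\head(\eta)}$, not a nontrivial path (see for instance the computation of $d_3$ in the proof of Theorem~\ref{thm:dimerresolution}). Minimality holds because at least \emph{one} of the two derivatives is nontrivial (since $\div(\eta)\neq\div(\eta')$), not because both are.
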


As Remark~\ref{rem:notcelldivision} shows, consistent toric algebras of global dimension $n$ need not be Calabi--Yau in general. Nevertheless, assuming the conjecture, a sufficient condition for such algebras to be Calabi--Yau can be read off directly from $\Delta$:

\begin{corollary}
Let $A$ be a consistent toric algebra of global dimension $n$ with toric cell complex $\Delta$. If $\Delta$ satisfies the duality property as stated in Proposition~\ref{prop:dualityMcKay}, then $A$ is Calabi--Yau.
\end{corollary}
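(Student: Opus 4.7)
The plan is to deduce the $n$-Calabi--Yau property of $A$ directly from the cellular resolution \eqref{eqn:NCBSresolution}---which exists by hypothesis---by applying $\Hom_{A^e}(-,A^e)$ to $P_\bullet\to A$ and identifying the resulting complex with $P_{n-\bullet}$ through the duality $\tau\colon\Delta\to\Delta$. This is enough: $A$ is $n$-Calabi--Yau precisely when $\Ext^i_{A^e}(A,A^e)=H^i\Hom_{A^e}(P_\bullet,A^e)$ vanishes for $i\neq n$ and is isomorphic to $A$ as an $A$-bimodule for $i=n$.

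First, one would establish the term-wise identification. For a projective bimodule of the form $Ae\otimes fA$ there is a canonical isomorphism $\Hom_{A^e}(Ae\otimes fA,A^e)\cong Af\otimes eA$ induced by the tensor-flip, where $A^e$ carries its \emph{inner} $A$-bimodule structure; this isomorphism swaps the two idempotents. Applied summand-by-summand to $P_k$ and combined with the bijection $\tau\colon\Delta_k\to\Delta_{n-k}$ together with the identities $\head(\tau(\eta))=\tail(\eta)$ and $\tail(\tau(\eta))=\head(\eta)$, this yields a canonical bimodule isomorphism $\Hom_{A^e}(P_k,A^e)\cong P_{n-k}$ for each $0\leq k\leq n$.

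Next, one would check compatibility with the differentials. The duality $\tau$ reverses face-inclusions, so $\eta^\prime$ is a facet of $\eta$ if and only if $\tau(\eta)$ is a facet of $\tau(\eta^\prime)$; and because $\tau$ swaps heads with tails, the defining lifts to the covering quiver force
\[
\overleftarrow\partial_{\tau(\eta)}\tau(\eta^\prime)=\overrightarrow\partial_{\eta^\prime}\eta\qquad\text{and}\qquad\overrightarrow\partial_{\tau(\eta)}\tau(\eta^\prime)=\overleftarrow\partial_{\eta^\prime}\eta.
\]
These two swaps are precisely what the tensor-flip demands in order for the transpose $d_k^{\ast}$ to be identified with the differential $d_{n-k+1}$ up to a sign. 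Setting $\varepsilon^\prime(\zeta,\zeta^\prime):=\varepsilon(\tau(\zeta^\prime),\tau(\zeta))$ defines a new incidence function on $\Delta$---the sign condition \eqref{eqn:signcondition} for $\varepsilon^\prime$ translates into the same condition for $\varepsilon$ applied to the dual cells---and under this choice the term-wise isomorphism above is promoted to a genuine isomorphism of complexes. Invoking the incidence-function independence of \eqref{eqn:NCBSresolution} up to canonical isomorphism (the direct analogue of Proposition~\ref{prop:McKaycomplex}) then yields $\Hom_{A^e}(P_\bullet,A^e)\cong P_{n-\bullet}$ as complexes of $A$-bimodules, from which the Calabi--Yau property follows at once.

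The main obstacle is bookkeeping of signs, compounded by the two distinct $A$-bimodule structures available on $A^e$. One must verify carefully that the tensor-flip really does intertwine the bimodule structure on $\Hom_{A^e}(Ae\otimes fA,A^e)$ (inherited from the inner $A^e$-action) with the standard outer structure on $Af\otimes eA$, and that the $\varepsilon^\prime$ constructed above genuinely satisfies \eqref{eqn:signcondition}, not merely the weaker requirement $\varepsilon^\prime\in\{0,\pm 1\}$. Both are formal consequences of the duality of $\Delta$, but careful attention to conventions will be essential for a clean argument.
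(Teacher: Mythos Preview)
The paper does not supply a proof of this corollary; it is stated immediately after the main conjecture with the phrase ``assuming the conjecture'' and left without argument. Your proposal is therefore not being compared against an existing proof but against the authors' implicit claim that the result follows easily once the cellular resolution~\eqref{eqn:NCBSresolution} is in hand.

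Your approach is the standard one and is correct in outline: dualise the finite projective bimodule resolution $P_\bullet$ over $A^e$, use the involution $\tau$ on $\Delta$ to identify $\Hom_{A^e}(P_k,A^e)$ with $P_{n-k}$, verify that under this identification $d_k^\ast$ becomes $d_{n-k+1}$ (for a suitable incidence function), and invoke the independence of the cellular resolution on the choice of incidence function. The key computation---that $\overleftarrow{\partial}_{\tau(\eta)}\tau(\eta')=\overrightarrow{\partial}_{\eta'}\eta$ and vice versa---is precisely what the head/tail swap in the duality property guarantees, and your observation that $\varepsilon'(\zeta,\zeta'):=\varepsilon(\tau(\zeta'),\tau(\zeta))$ is again an incidence function is the correct mechanism for absorbing signs. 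This is exactly the argument the authors have in mind; it is the noncommutative analogue of the well-known fact that a self-dual free resolution of the canonical module witnesses the Gorenstein property, and it appears in this form (for the dimer case) in the references~\cite{Broomhead, Ginzburg, BSW}. Your caveats about bookkeeping with the inner versus outer $A^e$-structures are well placed but present no genuine obstacle.
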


The algebras that satisfy the conditions of the corollary, including the example presented in Sections~6.1-6.2,  generalise to arbitrary dimension the three-dimensional algebras constructed from algebraically consistent dimer models.

\def\cprime{$'$}

\end{document}